\newtheorem{theorem}{Theorem}[section]
\newtheorem{proposition}[theorem]{Proposition}
\newtheorem{coro}[theorem]{Corollary}
\newtheorem{lemma}[theorem]{Lemma}
\newtheorem{definition}[theorem]{Definition}
\newtheorem{remark}[theorem]{Remark}
\renewcommand\Im{\operatorname{Im}}
\title[Scattering]{
Scattering and blowup for $L^{2}$-supercritical and $\dot{H}^{2}$-subcritical biharmonic NLS with potentials}
\author{Qing Guo, \ Hua Wang \ and Xiaohua Yao}
\address {Qing Guo, College of Science,
 Minzu University of China, Beijing, 100081, P.R. China}
\email{guoqing0117@163.com}
\address{ Hua Wang, School of Mathematics and Statistics and Hubei Province Key Laboratory of Mathematical Physics,
Central China Normal University, Wuhan, 430079, P.R. China}
\email{wanghua\_math@126.com}
\address{Xiaohua Yao, School of Mathematics and Statistics and  Hubei Province Key Laboratory of Mathematical Physics,
 Central China Normal University, Wuhan, 430079, P.R. China}
\email{yaoxiaohua@mail.ccnu.edu.cn }
\date{\today}
\subjclass[2000]{ 35B20; 35P25; 35Q55; 47J35}
\keywords{Biharmonic NLS; Focusing; Large potential; Scattering; Blowup.}
\begin{document}

\maketitle
\begin{abstract}
We mainly consider the focusing  biharmonic Schr\"odinger
equation with a large radial repulsive potential $V(x)$:
\begin{equation*}
\left\{ \begin{aligned}
  iu_{t}+(\Delta^2+V)u-|u|^{p-1}u=0,\;\;(t,x) \in {{\bf{R}}\times{\bf{R}}^{N}}, \\
  u(0, x)=u_{0}(x)\in H^{2}({\bf{R}}^{N}),
 \end{aligned}\right.
 \end{equation*}
If $N>8$, \  $1+\frac{8}{N}<p<1+\frac{8}{N-4}$ (i.e. the $L^{2}$-supercritical and $\dot{H}^{2}$-subcritical case ),  and $\langle x\rangle^\beta \big(|V(x)|+|\nabla V(x)|\big)\in L^\infty$ for some $\beta>N+4$,  then we firstly prove a global well-posedness and scattering result for
the radial  data $u_0\in H^2({\bf R}^N)$ which satisfies that   $$ M(u_0)^{\frac{2-s_c}{s_c}}E(u_0)<M(Q)^{\frac{2-s_c}{s_c}}E_{0}(Q)
\ \ {\rm{and}}\ \  \|u_{0}\|^{\frac{2-s_c}{s_c}}_{L^{2}}\|H^{\frac{1}{2}}
u_{0}\|_{L^{2}}<\|Q\|^{\frac{2-s_c}{s_c}}_{L^{2}}\|\Delta Q\|_{L^{2}}, $$  where $s_c=\frac{N}{2}-\frac{4}{p-1}\in(0,2)$, $H=\Delta^2+V$ and $Q$ is the ground state of
$\Delta^2Q+(2-s_c)Q-|Q|^{p-1}Q=0$.

 We crucially establish full Strichartz estimates and smoothing estimates of linear flow with a large poetential $V$, which are fundamental to our scattering results.

 Finally, based on the method introduced in \cite[T. Boulenger, E. Lenzmann, Blow up for biharmonic NLS, Ann. Sci. $\acute{E}$c. Norm. Sup$\acute{e}$r., 50(2017), 503-544]{B-Lenzmann},
we also prove a blow-up result for a class of potential $V$ and
the radial  data $u_0\in H^2({\bf R}^N)$ satisfying that   $$ M(u_0)^{\frac{2-s_c}{s_c}}E(u_0)<M(Q)^{\frac{2-s_c}{s_c}}E_{0}(Q)
\ \ {\rm{and}}\ \  \|u_{0}\|^{\frac{2-s_c}{s_c}}_{L^{2}}\|H^{\frac{1}{2}}
u_{0}\|_{L^{2}}>\|Q\|^{\frac{2-s_c}{s_c}}_{L^{2}}\|\Delta Q\|_{L^{2}}. $$
\end{abstract}

\tableofcontents

\section{Introduction}
\setcounter{equation}{0}
In this paper, we consider the biharmonic NLS with a potential ($\rm{BNLS_{V}}$)
\begin{equation}\label{1.1}
\left\{ \begin{aligned}
  iu_{t}+Hu+\lambda|u|^{p-1}u=0,\;\;(t,x) \in {{\bf{R}}\times{\bf{R}}^{N}}, \\
  u(0, x)=u_{0}(x)\in H^{2}({\bf{R}}^{N}),
 \end{aligned}\right.
 \end{equation}
where $u: I\times {\bf R}^{N}\rightarrow {\bf C}$ is a complex-valued function, $H=H_{0}+V$,
$H_{0}=\Delta^{2}$, $V: {\bf R}^{N}\rightarrow {\bf R}$, $\lambda=\pm 1$ and
$1<p<\infty$. The defocusing regime corresponds to the case $\lambda=+1$, and the focusing regime
to the case $\lambda=-1$. The biharmornic Schr\"{o}dinger equation has been introduced by Karpman \cite{Karp} and
Karpman and Shagalor \cite{Karp-Sha} to take into account the role of small fourth order dispersion terms in the
propagation of intense laser beams in a bulk medium with kerr nonlinearity.
The equation \eqref{1.1} has two important conservation laws in the energy space
$H^{2}({\bf{R}}^{N})$: The mass is defined by
\begin{align}\label{1.2}
M(u)=\displaystyle\int_{{\bf R}^{N}}|u(x)|^{2}dx,
\end{align}
and the energy is defined by
\begin{align}\label{1.2}
E(u)=E_{V}(u)=\frac{1}{2}\displaystyle\int_{{\bf R}^{N}}|\Delta u(x)|^{2}dx
+\frac{1}{2}\displaystyle\int_{{\bf R}^{N}}V(x)|u(x)|^{2}dx
+\frac{\lambda}{p+1}\displaystyle\int_{{\bf R}^{N}}|u(x)|^{p+1}dx.
\end{align}
When $V$ vanishes, we replace $E(u)$ by $E_{0}(u)$.
Moreover, you can easily see that the equation \eqref{1.1}
without potentials is invariant under the scaling transformation $u(x,t)\rightarrow l^{\frac{1}{p-1}}
u(l x, l^{2}t)$, which also leaves the norm of the homogeneous Sobolev space
$\dot{H}^{s_{c}}({\bf R}^{N})$ invariant, where $s_{c}=\frac{N}{2}-\frac{4}{p-1}$.
So we call that the equation \eqref{1.1} is energy subcritical for $n\leq 4$ or $p<1+\frac{8}{N-4}$
when $n\geq 5$, which correspond to $s_c<2$. Energy-criticality appears with the power
$p=1+\frac{8}{N-4}$, corresponding to $s_c=2$, and mass-criticality with power $p=1+\frac{8}{N}$ when $s_c=0$.

Let's recall some progress on the global well-posedness and scattering to \eqref{1.1} when $V=0$.
Fibich, Ilan and Papanicolaou \cite{Fibich} describe various properties of the equation in the subcritical
regime, with part of their analysis relying on very interesting numerical developments. Segata in
\cite{Seg} proved scattering  for the cubic nonlinearity in ${\bf R}$; while in higher dimensions
$5\leq N\leq 8$, the scattering results in $H^{2}({\bf R}^{N})$ were obtained by Pausader in \cite{Pau2},
which was extended by Miao, Xu and Zhao in \cite{MXZ3} to a low regularity space $H^{s}({\bf R}^{N})$
with some $s<2$ for $5\leq N\leq 7$.
Global well-posedness and scattering for the energy critical case were considered by Miao, Xu and Zhao
in \cite{MXZ1}, \cite{MXZ2} and Pausader in \cite{Pau1} and \cite{Pau3}. In \cite{Pau-Shao}, Pausader and Shao proved that scattering
 for the mass-critical fourth-order
Sch\"{o}dinger equation holds true in $L^{2}({\bf R}^{N})$ in high dimensions $N\geq 5$.
As for the mass-supercritical and energy-subcritical case, that is with the power $1+\frac{8}{N}<p<1+\frac{8}{N-4}$ $(N\geq 5)$,
the scattering results for the
defocusing case ($\lambda=+1$)  in the energy
space could be obtained using the argument in Lin and Strauss \cite{Lin} as discussed in \cite{Pau3}, also in \cite{Caz}. The same results
were established in \cite{Pau-Xia} for low dimensions $1\leq N\leq 4$ and $1+\frac{8}{N}<p<\infty$.

While for the corresponding focusing case ($\lambda=-1$), the first author \cite{Guo} recently
obtained a mass-supercritical and energy-subcritical scattering result with radial initial data for
all dimensions. Note that when $\lambda=-1$, one cannot hope to get a similar global result as in
\cite{Pau3}. Indeed, the existence of a nontrivial solution of the elliptic equation
\begin{align}\label{1.4}
\Delta^{2}Q+(2-s_{c})Q-|Q|^{p-1}Q=0
\end{align}
which we refer to as the ground state $Q\in H^{2}({\bf R}^{N})$, can be obtained by similar method to that
used in \cite{B-Lenzmann}. We then conclude that solitary waves $u(x, t)=e^{i(2-s_{c})t}Q(x)$ do not scatter. One
can refer to \cite{Fibich} for some similar results. The first author obtained the following result of scattering
for the solution of \eqref{1.1} with $V=0$ and radial data, which would complement the recent analysis on blowup theory by
Boulenger and Lenzmann \cite{B-Lenzmann}.

\begin{theorem}\label{th1.1} ( See \cite{B-Lenzmann, Guo} )
Assume that $V=0$, $\lambda=-1$, $1+\frac{8}{N}<p<1+\frac{8}{N-4}$
(when $2\le N\leq 4$, $1+\frac{8}{N}<p<\infty$).
Let $u_{0}\in H^{2}({\bf R}^{N})$ be radial and $u\in C(I;H^2({\bf R}^N))$ be the corresponding solution to \eqref{1.1} with
maximal forward time interval of existence $I\subset {\bf R}$. Then

(i) If
\begin{align}\label{1.5}
M(u_{0})^{\frac{2-s_{c}}{s_{c}}}E_{0}(u_{0})<M(Q)^{\frac{2-s_{c}}{s_{c}}}E_{0}(Q),
\end{align}
and
\begin{align}\label{1.6}
\|u_{0}\|_{L^{2}({\bf R}^{N})}^{\frac{2-s_{c}}{s_{c}}}\|\Delta u_{0}\|_{L^{2}({\bf R}^{N})}
<\|Q\|_{L^{2}({\bf R}^{N})}^{\frac{2-s_{c}}{s_{c}}}\|\Delta Q\|_{L^{2}({\bf R}^{N})},
\end{align}
where $Q$ is the solution of \eqref{1.4}, then $I=(-\infty, +\infty)$, and $u$ scatters in $H^{2}({\bf R}^{N})$.
That is, there exists $\phi_{\pm}\in H^{2}({\bf{R}}^{N})$ such that
\begin{align}\label{1.7}
\lim_{t\rightarrow \pm\infty}\|u(t)-e^{itH_{0}}\phi_{\pm}\|_{H^{2}({\bf R}^{N})}=0.
\end{align}

(ii) Either if $E_{0}(u_{0})<0$ or, if $E_{0}(u_{0})\ge 0$,  assume that  \eqref{1.5} and
\begin{align}\label{1.6'}
\|u_{0}\|_{L^{2}({\bf R}^{N})}^{\frac{2-s_{c}}{s_{c}}}\|\Delta u_{0}\|_{L^{2}({\bf R}^{N})}
>\|Q\|_{L^{2}({\bf R}^{N})}^{\frac{2-s_{c}}{s_{c}}}\|\Delta Q\|_{L^{2}({\bf R}^{N})}
\end{align}
hold, then the solution $u\in C([0,T);H^2({\bf R}^N))$ of \eqref{1.1} blows up in finite time, i.e., there exists some $0<T<+\infty$ such that
$\lim_{t\uparrow T}\|\Delta u(t)\|_{L^2}=+\infty$.
\end{theorem}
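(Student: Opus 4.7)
The plan is to treat the two parts separately, since they require different tools: part (i) is a long-time dispersive analysis based on the Kenig--Merle concentration-compactness method, while part (ii) is a virial/rigidity argument in the spirit of Boulenger--Lenzmann. Both parts rest on the sharp Gagliardo--Nirenberg inequality associated with the ground state $Q$ of \eqref{1.4}, which encodes the threshold $\|Q\|_{L^2}^{(2-s_c)/s_c}\|\Delta Q\|_{L^2}$ appearing on the right-hand side of \eqref{1.6} and \eqref{1.6'}.

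For part (i), I would first record the sharp Gagliardo--Nirenberg inequality
\begin{equation*}
\|u\|_{L^{p+1}}^{p+1} \le C_{\mathrm{GN}}\, \|u\|_{L^2}^{(p+1)(1-\theta)}\,\|\Delta u\|_{L^2}^{(p+1)\theta},
\end{equation*}
with $C_{\mathrm{GN}}$ attained at $Q$, and combine it with mass/energy conservation to prove that the two assumptions \eqref{1.5}--\eqref{1.6} propagate under the flow: the functional $\|u(t)\|_{L^2}^{(2-s_c)/s_c}\|\Delta u(t)\|_{L^2}$ stays strictly below the threshold for all $t\in I$. This gives global $H^2$-bounds and coercivity of $E_0$. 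I would then execute the standard Kenig--Merle road map in the biharmonic setting: (a) local well-posedness and a small-data scattering criterion using biharmonic Strichartz estimates for $e^{it\Delta^2}$; (b) a linear $H^2$-profile decomposition adapted to $e^{it\Delta^2}$; (c) a ``minimal counterexample'' argument that produces, should scattering fail, a non-scattering solution whose orbit is precompact in $H^2$ modulo the symmetries left after imposing radiality (only scaling remains); (d) a rigidity step that kills this critical element via a radial localized virial/Morawetz identity.

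For part (ii), I would follow the localized virial method of \cite{B-Lenzmann}. Fix a radial cutoff $\phi_R(x)$ equal to $|x|^2$ on $\{|x|\le R\}$ and suitably truncated for $|x|\ge 2R$, and consider $\mathcal{V}_R(t)=\int \phi_R(x)|u(t,x)|^2\,dx$. A direct but lengthy computation (using the fact that $H=\Delta^2$ here) yields
\begin{equation*}
\frac{d^2}{dt^2}\mathcal{V}_R(t) = 8\,\mathcal{K}(u(t)) + \mathcal{R}_R(u(t)), \qquad \mathcal{K}(u) := \|\Delta u\|_{L^2}^2 - \tfrac{N(p-1)}{2(p+1)}\|u\|_{L^{p+1}}^{p+1},
\end{equation*}
where $\mathcal{R}_R$ is supported in $\{|x|\ge R\}$. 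Under \eqref{1.5} and \eqref{1.6'} the sharp Gagliardo--Nirenberg inequality together with energy conservation yields the quantitative bound $\mathcal{K}(u(t)) \le -c_0\,\|\Delta u(t)\|_{L^2}^2$ for some $c_0>0$ uniform in $t$. For the error $\mathcal{R}_R$, I would exploit the radial Strauss-type decay $|u(x)|\lesssim |x|^{-(N-1)/2}\|u\|_{L^2}^{1/2}\|\nabla u\|_{L^2}^{1/2}$ (and its higher-order analogues) to get $L^{p+1}$-smallness on $\{|x|\ge R\}$ at the price of a sub-quadratic power of $\|\Delta u\|_{L^2}$; choosing $R$ large and absorbing via Young's inequality gives
\begin{equation*}
\frac{d^2}{dt^2}\mathcal{V}_R(t) \le -c_1\,\|\Delta u(t)\|_{L^2}^2 < 0
\end{equation*}
uniformly on $[0,T_{\max})$. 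Since $\mathcal{V}_R \ge 0$, a standard convexity argument shows $T_{\max}<+\infty$, and then $\|\Delta u(t)\|_{L^2}\to\infty$ is forced by the blow-up criterion.

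The main obstacle is the rigidity step (d) in part (i). The biharmonic operator makes the virial identity fourth order, so the commutator $[\Delta^2,\phi_R]$ produces many more error terms than in the classical Schr\"odinger case, and the cutoff $\phi_R$ must be designed so that the leading quadratic form in $\Delta u$ carries a favorable sign; this is the same delicate computation that powers the blow-up proof in \cite{B-Lenzmann} and that is adapted in \cite{Guo} to the compact critical element. Radiality is indispensable here, both to legitimately localize in $|x|$ and to compensate for the absence of a usable vector-field Morawetz estimate for $e^{it\Delta^2}$ in the non-radial class.
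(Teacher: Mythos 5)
This theorem is quoted by the paper from \cite{B-Lenzmann, Guo} rather than proved in the text, so the relevant comparison is with those proofs and with the paper's own arguments for the potential versions (Sections 2--8). Your outline of part (i) — propagation of the sub-threshold conditions via the sharp Gagliardo--Nirenberg inequality, then the Kenig--Merle scheme (local theory and small-data scattering from biharmonic Strichartz estimates, radial profile decomposition, extraction of a compact critical element, rigidity via a localized virial) — is exactly the route of \cite{Guo} and of the paper's proof of Theorem \ref{th1.2}, so that half is fine as a roadmap.

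Part (ii), however, contains a genuine gap. You take $\mathcal{V}_R(t)=\int\phi_R|u|^2\,dx$ and assert the identity $\frac{d^2}{dt^2}\mathcal{V}_R=8\mathcal{K}(u)+\mathcal{R}_R$, then close with Glassey convexity. That identity is the \emph{second-order} NLS one transplanted verbatim, and it fails for $\Delta^2$: for the biharmonic flow one computes
\begin{equation*}
\frac{d}{dt}\int\phi|u|^2\,dx=-2\,\Im\int\phi\,\bar u\,\Delta^2u\,dx=-4\,\Im\int\nabla\phi\cdot\nabla\bar u\,\Delta u\,dx-2\,\Im\int\Delta\phi\,\bar u\,\Delta u\,dx,
\end{equation*}
which is \emph{not} the localized virial $\mathcal{M}_R(u)=2\,\Im\int\bar u\,\nabla\phi_R\cdot\nabla u\,dx$; consequently $\frac{d^2}{dt^2}\mathcal{V}_R$ is not the virial derivative $8\|\Delta u\|_{L^2}^2-\frac{2N(p-1)}{p+1}\|u\|_{L^{p+1}}^{p+1}+(\text{localized errors})$, and the nonnegative quantity whose concavity you invoke is not the one whose second derivative you control. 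Circumventing precisely this obstruction is the main point of \cite{B-Lenzmann}: one works directly with the \emph{first} time derivative of $\mathcal{M}_R(u)$ (computed through the commutator $[i\Gamma_R,\Delta^2]$, as in Lemma \ref{lemMR} of this paper), obtains $\frac{d}{dt}\mathcal{M}_R(u(t))\le-c\,\|\Delta u(t)\|_{L^2}^2$, and then concludes not by convexity of a positive quantity but by combining this with the a priori bound $|\mathcal{M}_R(u)|\lesssim R\,\|u\|_{L^2}^{3/2}\|\Delta u\|_{L^2}^{1/2}$ and a nonlinear ODE comparison argument. (There is also a factor-of-two slip in your $\mathcal{K}$: the coefficient of $\|u\|_{L^{p+1}}^{p+1}$ in the correct virial derivative is $\frac{2N(p-1)}{p+1}$ after multiplying by $8$, not $\frac{4N(p-1)}{p+1}$; this matters for identifying the sharp threshold.) The coercivity estimate $\mathcal{K}(u(t))\le-c_0\|\Delta u(t)\|_{L^2}^2$ above the threshold and the radial Strauss control of the exterior errors are correctly identified, but the blow-up mechanism itself needs to be replaced as described.
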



Motivated by these works,
 we naturally hope to extend Theorem \ref{th1.1} above in Q. Guo\cite{Guo} and Boulenger and Lenzmann \cite{B-Lenzmann}
to the case with a potential$V$, that is, to get the scattering and blow-up results in the energy space for the focusing ${\rm BNLS_{V}}$ \eqref{1.1}. For the end, however, there are several crucial obstacles due to the existence of  potential $V$.

Firstly, we need to establish the Strichartz estimates of linear group $e^{it(\Delta^2+V)}$, which are fundamental to the nonlinear equation ${\rm BNLS_{V}}$ \eqref{1.1}. Recall that in the free biharmonic operator $\Delta^2$, Ben-Artzi, Koch and Saut \cite{BKS} had proven the following sharp kernel estimate, \begin{equation}\label{freeL1Linfityepsilon}
  |D^{\alpha}I_{0}(t,x)|\leq C|t|^{-(N+|\alpha|)/4}\ \Big(1+|t|^{-1/4}|x|\Big)^{(|\alpha|-N)/3},\,\, t\neq0,\,x\in\mathbf{R}^{N},
\end{equation}
where $I_{0}(t,x)$ is the kernel of $e^{it\Delta^2}$. The above estimate implies the $L^{1}\rightarrow L^{\infty}$-estimate of $e^{it\Delta^2}$, namely
\begin{equation}\label{freeL1Linfty}
  \|D^\alpha e^{it\Delta^2}\|_{L^{1}(\mathbf{R}^{N})\rightarrow L^{\infty}(\mathbf{R}^{N})}\le C |t|^{-(N+|\alpha|)/4}, \ t\neq0, \ |\alpha|\le N.
\end{equation}
Hence the endpoint Strichartz estimates for the free group  $e^{it\Delta^2}$ can be established by  using the $L^{1}\rightarrow L^{\infty}$ estimate \eqref{freeL1Linfty} and  Keel-Tao arguments ( See \cite{Keel} ). For instance,  by \eqref{freeL1Linfty} we can establish that for any {\it S-admissible pairs} $(q,r)$
and $(a,b)$, and any $s\geq0$,
 \begin{equation}\label{2.3}
\Vert |\nabla|^{s}u\Vert_{L^q(I,L^r)}\leq C\  \Big(\Vert
|\nabla|^{s-\frac2q}u_0\Vert_{L^2}+ \Vert
|\nabla|^{s-\frac2q-\frac2a}h\Vert_{L^{a'}(I,L^{b'})}\Big),
\end{equation}
 and so on, where  $u$ is the  solution given by
 \begin{equation}\label{2.1'}
 u(t)=e^{it\Delta^2}u_0+i\int_0^te^{i(t-s)\Delta^2}h(s)ds.
\end{equation}
Indeed, one can see  more refined Strichartz estimates with regularity  in Section \ref{sec-2} below. For the fourth-order Schr\"odinger operator $H=\Delta^2+V$, it is much difficult to establish the similar kernel estimate \eqref{freeL1Linfityepsilon} for $e^{itH}$,  and  hard to prove the $L^{1}\rightarrow L^{\infty}$-estimate \eqref{freeL1Linfty}. In order to obtain Strichartz estimates of $e^{itH}$, we will use  Jensen-Kato decay estimate and local decay estimate of $H$ to overcome the difficulties caused by the potential $V$.

Very recently, Feng, Soffer and Yao \cite{FSY} have firstly established the following Jensen-Kato type decay estimate of the fourth order Schr\"odinger operator $H=\Delta^2+V$ ( see Lemma \ref{lemFSY} below ):
\begin{equation}\label{JKd}
  \|\langle x\rangle^{-\sigma} e^{-itH}P_{ac} \langle x\rangle^{-\sigma}\|_{L^{2}(\mathbf{R}^{N})-L^{2}(\mathbf{R}^{N})}\leq C \  \langle t\rangle^{-N/4}, \ t\in\mathbf{R},\ \sigma>N/2+2,
\end{equation}
under the assumptions that $\langle x\rangle^{\beta}V(x)\in L^{\infty}(\mathbf{R}^{N})$ for some large $\beta>0$, and $H$ has no positive embedded eigenvalues and  0 is not an eigenvalue nor resonance of $H$. Here $P_{ac}$ denotes the projection onto the absolutely continuous spectrum space of $H$, which removes the eigenstates and is necessary to dispersive estimate of $e^{itH}$.
We remark that Kato-Jensen type estimates is original in Jensen and Kato' famous work \cite{JK} for Schr\"odinger operator $-\Delta+V$ , which since later plays key roles in many important problems, such as $L^p$-decay estimates of Schr\"odinger operator in \cite{JS}, Soliton stability of NLS in \cite{BP1}, and so on.

In this paper, we will used Kato-Jensen estimates \eqref{JKd} to establish several useful Strichartz estimates and
 smoothing estimates for the linear solution $e^{itH}$ of \eqref{1.1}, {\it under the helps of some further conditions on $V$ and the restriction of dimension $N$}. Here, we do not attempt to express these specific  Strichartz estimates with potential. One can see Proposition \ref{lem2.1}, Proposition \ref{lem2.3} and Proposition \ref{lem2.4} in Section 2 below.
  Finally, we mention
that some Strichartz type estimates obtained here are independent of the scaling $V(x)\rightarrow
V_{r}=\frac{1}{r^{4}}V(\frac{x}{r})$ for any given $r>0$, which is very important to establish
linear profile decomposition with a potential (see Proposition
\ref{pro5.3} below) .


%
%
%

Our first scattering result in this paper can be stated as follows:

\begin{theorem}\label{th1.2}
 Let  $V$ be a radial real $C^1$-function of ${\bf R}^{N}$ satisfying that $x\cdot \nabla V\leq 0$ and
 $$|V(x)|+|\nabla V(x)|\le C (1+ |x|)^{-\beta}
$$ for some $\beta>N+4$.
Suppose that $\lambda=-1$,\ $N>8$, \ $1+\frac{8}{N}<p<1+\frac{8}{N-4}$, \
$u_{0}\in H^{2}({\bf R}^{N})$ is radial and $u\in C(I;H^2({\bf R}^N))$ is the corresponding solution to \eqref{1.1} with
maixmal forward time interval of existence $I\subset {\bf R}$.
If
\begin{align}\label{1.8}
M(u_{0})^{\frac{2-s_{c}}{s_{c}}}E(u_{0})<M(Q)^{\frac{2-s_{c}}{s_{c}}}E_{0}(Q),
\end{align}
and
\begin{align}\label{1.9}
\|u_{0}\|_{L^{2}({\bf R}^{N})}^{\frac{2-s_{c}}{s_{c}}}\|H^{\frac{1}{2}} u_{0}\|_{L^{2}({\bf R}^{N})}
<\|Q\|_{L^{2}({\bf R}^{N})}^{\frac{2-s_{c}}{s_{c}}}\|\Delta Q\|_{L^{2}({\bf R}^{N})}
\end{align}
where $Q$ is the solution of \eqref{1.4} and $H=\Delta^2+V$, then $I=(-\infty, +\infty)$, and $u$ scatters in $H^{2}({\bf R}^{N})$.
That is, there exist $\phi_{\pm}\in H^{2}({\bf{R}}^{N})$ such that
\begin{align}\label{1.10}
\lim_{t\rightarrow \pm\infty}\|u(t)-e^{itH}\phi_{\pm}\|_{H^{2}({\bf R}^{N})}=0.
\end{align}
\end{theorem}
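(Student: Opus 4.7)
The plan is to establish Theorem \ref{th1.2} via the Kenig-Merle concentration-compactness and rigidity strategy, adapted to the biharmonic flow and to the presence of the potential $V$. I would first exploit the variational structure. The hypothesis $x\cdot\nabla V\le 0$ combined with the decay $V(x)\to 0$ as $|x|\to\infty$ forces $V\ge 0$, so $\|H^{1/2}u\|_{L^2}^2\ge \|\Delta u\|_{L^2}^2$, and combining \eqref{1.8}--\eqref{1.9} with the sharp Gagliardo-Nirenberg inequality attached to the ground state $Q$ of \eqref{1.4} yields a trapping estimate: by a continuity-in-time argument, the quantity $\|u(t)\|_{L^2}^{(2-s_c)/s_c}\|H^{1/2}u(t)\|_{L^2}$ stays strictly below the ground state value $\|Q\|_{L^2}^{(2-s_c)/s_c}\|\Delta Q\|_{L^2}$ on the entire maximal interval, and the sub-threshold gap translates into a uniform positive lower bound on the Pohozaev-type functional
\begin{equation*}
K(u(t)):=8\|\Delta u(t)\|_{L^2}^2-\frac{4N(p-1)}{p+1}\|u(t)\|_{L^{p+1}}^{p+1}\ge \delta>0.
\end{equation*}
This gives an a priori $H^2$-bound for the nonlinear evolution and sets up the subsequent rigidity step.

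Combined with small-data scattering derived from the full Strichartz and smoothing estimates in Proposition \ref{lem2.1}, Proposition \ref{lem2.3} and Proposition \ref{lem2.4}, this produces a scattering threshold. A Kenig-Merle induction on the conserved quantity $M(u)^{(2-s_c)/s_c}E(u)$ then either yields the theorem or extracts a critical element $u_c$: a minimal non-scattering solution whose trajectory $\{u_c(t)\}$ is precompact in $H^2$ up to the symmetries of the problem. To construct such $u_c$ I would invoke the linear profile decomposition for $e^{itH}$ of Proposition \ref{pro5.3}. Its key feature is that profiles split into two kinds: bounded-core profiles, which are genuinely driven by $e^{itH}$, and escaping profiles, whose spatial cores drift to infinity; for the latter, the rescaled potential $V_r=r^{-4}V(\cdot/r)$ tends to zero and the scaling-invariant Strichartz estimates of Section \ref{sec-2} allow one to replace $e^{itH}$ by $e^{it\Delta^2}$ up to small errors. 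Escaping nonlinear profiles then inherit scattering from Theorem \ref{th1.1}, while a Pythagorean mass/energy expansion together with the strict sub-threshold hypothesis forces $u_c$ to be a single bounded-core profile, radial and with no spatial drift.

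The final and hardest step is the rigidity argument ruling out such a $u_c$. I would work with a radial cutoff weight $\phi_R$ coinciding with $|x|^2$ on $\{|x|\le R\}$ and bounded on $\{|x|\ge 2R\}$, and compute a localized virial of Boulenger-Lenzmann type \cite{B-Lenzmann} along the flow. The resulting identity has schematic form
\begin{equation*}
\frac{d}{dt} M_{\phi_R}[u] \;=\; K(u(t)) - 4\int_{\mathbb{R}^N} (x\cdot\nabla V)\,|u|^2\,dx + \mathcal{E}_R(u),
\end{equation*}
where $\mathcal{E}_R(u)$ collects error terms supported in $\{|x|\ge R\}$. The sign condition $x\cdot\nabla V\le 0$ makes the potential contribution nonnegative, while the coercivity of the variational step provides $K(u(t))\ge\delta>0$ uniformly in $t$. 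By the precompactness of the orbit of $u_c$ together with radial Strauss decay, $|\mathcal{E}_R(u_c(t))|$ can be made smaller than $\delta/2$ for all $t$ once $R$ is large enough, so the right-hand side is bounded below by $\delta/2$. Integrating in time then contradicts the bound $|M_{\phi_R}[u_c(t)]|\lesssim R^2$, so $u_c$ cannot exist. The main obstacle is making this error control robust against $V$: the potential spoils several integration-by-parts identities used in the potential-free case of \cite{B-Lenzmann}, and I expect to rely crucially on the smoothing and local decay estimates for $e^{itH}$ from Section \ref{sec-2}, together with the dimension restriction $N>8$ that makes the radial embedding sharp enough to absorb the extra $V$-terms.
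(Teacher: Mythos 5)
Your proposal follows essentially the same route as the paper: the trapping/coercivity step via the sharp Gagliardo--Nirenberg inequality with $\|H^{1/2}u\|_{L^2}$ (Sections 4--5), a Kenig--Merle extraction of a critical element using the profile decomposition for $e^{itH_{r_n}}$ together with comparison to the potential-free flow of Theorem \ref{th1.1} when the scaling degenerates (Section 6), and a localized virial rigidity argument exploiting $x\cdot\nabla V\le 0$ and the precompactness of the orbit (Section 7). The only inaccuracies are cosmetic: in the radial setting the degenerate profiles arise from the scaling parameter $r_n\to 0$ or $\infty$ of the mass-normalized sequence rather than from spatial cores drifting to infinity, and the restriction $N>8$ is used for the linear Strichartz, smoothing and Hardy estimates rather than for the virial error terms.
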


Some comments on the conditions on $V$ and results of Theorem \ref{th1.2} as follows:

\begin{remark}\label{rem1.5}
There exist a great number of potentials $V$ satisfying the conditions in Theorem \ref{th1.2} above. For instance, simply, for any $\sigma>N/2+2$, we can take
$$V(x)=\frac{C}{(1+|x|^2)^{\sigma}}, \ C>0.$$
The radial requirement of $V$ comes from the focusing case( $\lambda=-1$ ), which can be removed in the following defousing case( $\lambda=1$ ). The decay index $\beta$ and restriction of dimension $N$ are technical and at present not optimal, which surly can be improved further. The repulsive condition $(x\cdot\nabla) V(x)\le 0$ plays an important role in the spectrum of $ H=\Delta^2+V$ and Morawetz estimates in this paper. In particular, the repulsive condition can be used to show that $H$ has no any eigenvalue in $\mathbf{R}$, see Section \ref{sec-2} below.

\end{remark}

\begin{remark}\label{rem1.3}
Note that both $(x\cdot\nabla) V(x)\le 0$ and $\lim_{x\rightarrow\infty}V(x)=0$, imply $V(x)\ge0$. In fact, it can be easily concluded by the following integral
$$V(x)=-\int^\infty_1\frac{d}{ds}\big(V(sx)\big)ds\ge0, \ \ x\neq0,$$
where $\frac{d}{ds}\big(V(sx)\big)=\frac{1}{s}(sx\cdot \nabla)V(sx)\le 0$.  Thus, $H=\Delta^2+V$ is a nonnegative self-adjoint operator, and
$$\|H^{\frac{1}{2}} u_{0}\|^2_{L^{2}}=\langle Hu_0, u_0\rangle=\int_{{\bf R}^{N}}|\Delta u_0|^2dx+\int_{{\bf R}^{N}}V|u_0|^2dx.$$
In particular, if $0\leq V\in L^{\frac{N}{4}}({\bf R}^{N}), N>4$, then we have
$$
\|H^{\frac{1}{2}}f\|_{L^{2}}\sim \int_{{\bf R}^{N}}|\Delta u_0|^2dx=\|\Delta u_0\|_{L^{2}}^{2}.
$$
Indeed, since $V\ge 0$, clearly
$
\|\Delta u_0\|_{L^{2}}^{2}\leq \|H^{\frac{1}{2}}u_0\|_{L^{2}}^{2}.
$
On the other hand, since $V\in  L^{\frac{N}{4}}$,  it follows from  H\"older inequality and Sobolev embedding that
$$
\|H^{\frac{1}{2}}u_0\|_{L^{2}}^{2}\leq \|\Delta u_0\|_{L^{2}}^{2}+\|V\|_{L^{\frac{N}{4}}}\|u_0\|_{L^{\frac{2N}{N-4}}}^{2}
\lesssim \|\Delta u_0\|_{L^{2}}^{2}.
$$

\end{remark}

Using the Morawetz estimates, Feng, the second and third authors \cite{FWY} considered the small potential $V$ when $N\geq 7$ for the defocusing $\rm{BNLS_{V}}$
\eqref{1.1} with non-radial initial data.  Based on the new Strichartz estimates with large potentials,  in this paper we can extend the result in \cite{FWY} to
the large potential case in $N>8$ (As the proof is almost the same as the one in \cite{FWY}, it will be omitted here).

\begin{theorem}\label{th1.6}
 Let $\lambda=+1$, $ N>8$, $1+\frac{8}{N}<p<1+\frac{8}{N-4}$ and $V$ be a real $C^1$-function of ${\bf R}^{N}$ satisfying that $x\cdot \nabla V\leq 0$ and
 $$|V(x)|+|\nabla V(x)|\le C (1+ |x|)^{-\beta}
$$ for some $\beta>N+4$. Assume that $u_{0}\in H^{2}({\bf R}^{N})$ and $u\in C(I;H^2({\bf R}^N))$ be the corresponding solution to \eqref{1.1} with
maixmal forward time interval of existence $I\subset {\bf R}$.
Then $I=(-\infty, +\infty)$, and $u$ scatters in $H^{2}({\bf R}^{N})$.
\end{theorem}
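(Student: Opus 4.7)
The plan is to follow the scheme of Feng--Wang--Yao \cite{FWY} with the new Strichartz and smoothing estimates for $e^{itH}$ (Proposition \ref{lem2.1}, Proposition \ref{lem2.3}, Proposition \ref{lem2.4}) substituted in place of the small-potential estimates used there. First I would set up local well-posedness in $H^{2}(\mathbf{R}^{N})$ for $\mathrm{BNLS}_V$ by a standard contraction-mapping argument on the space $L^{q}_{t}L^{r}_{x}$ for an appropriate $S$-admissible pair at regularity $s_c$, using the Strichartz estimates for the perturbed propagator. The nonlinearity $|u|^{p-1}u$ is $H^2$-subcritical in the range $1+\tfrac{8}{N}<p<1+\tfrac{8}{N-4}$, so this step is routine once the Strichartz estimates for $e^{itH}$ with regularity are in hand.

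Next, since $\lambda=+1$ and since the repulsive hypothesis $x\cdot\nabla V\le 0$ together with the decay of $V$ forces $V\ge 0$ (as noted in Remark \ref{rem1.3}), the energy
\[
E(u)=\tfrac12\|H^{1/2}u\|_{L^2}^2+\tfrac{1}{p+1}\|u\|_{L^{p+1}}^{p+1}
\]
is nonnegative and conserved, and controls $\|H^{1/2}u\|_{L^2}^2\sim\|\Delta u\|_{L^2}^2$ uniformly in time. Combined with mass conservation this provides an a priori bound in $H^2$ and hence global existence; this bound already holds \emph{without} any size restriction on $u_0$ since there is no ground-state obstruction in the defocusing case.

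The central step, and the main technical obstacle, will be to upgrade these global bounds to scattering via a Morawetz-type estimate. I would use the classical Morawetz multiplier $a(x)=|x|$ (or a convex, radial weight that coincides with $|x|$ on a large ball), test $\mathrm{BNLS}_V$ against $M_a(u)=2\,\mathrm{Im}\int \bar{u}\,\nabla a\cdot\nabla u$ (or its fourth-order analogue), and compute $\frac{d}{dt}M_a(u)$. The key algebraic point is that the potential term contributes $-\int(x\cdot\nabla V)|u|^2\,dx\ge 0$ by the repulsivity assumption, so it has the good sign and can be discarded; the biharmonic terms produce a positive coercive quantity (bounds on $\|u\|_{L^{p+1}}^{p+1}$ and on certain $\dot H^1$-type norms of $u$) whose space-time integral is controlled by $\|u_0\|_{H^2}^2$. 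The decay hypothesis $|\nabla V|\lesssim\langle x\rangle^{-\beta}$ with $\beta>N+4$ ensures that all error terms coming from the truncation of $|x|$ and from commutators with $V$ are integrable in $x$ and hence absorbable into the boundary term. The main subtlety here is that the fourth-order Morawetz identity for $\Delta^2+V$ involves more terms than the Schr\"odinger case, so one has to verify that the Strichartz/smoothing estimates in Section 2 of the paper are strong enough to control these error terms; in particular the local smoothing estimate derived from the Jensen--Kato decay \eqref{JKd} is exactly what is needed to bound the non-positive remainders uniformly in time.

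Finally, once the Morawetz estimate yields an a priori bound on a scattering norm such as $\|u\|_{L^{q_0}_t L^{r_0}_x(\mathbf{R}\times \mathbf{R}^N)}$ for a suitable pair $(q_0,r_0)$, scattering in $H^2$ follows from a standard bootstrap: partition $\mathbf{R}$ into finitely many intervals on which the Strichartz norms of $u$ at regularity $s_c$ and $2$ are small, apply the inhomogeneous Strichartz inequality on each, and extract the asymptotic states $\phi_\pm$ via $\phi_\pm=u_0+i\int_0^{\pm\infty}e^{-isH}|u|^{p-1}u\,ds$. The convergence \eqref{1.10} is then immediate from the Cauchy criterion provided by the global Strichartz bound. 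As emphasized in the paper, this final bootstrap is essentially identical to the argument of \cite{FWY}, so once the Morawetz step and the Strichartz inputs are in place the conclusion follows without further difficulty.
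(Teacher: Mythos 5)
Your proposal is correct and follows exactly the route the paper intends: the paper omits the proof of Theorem \ref{th1.6}, stating that it is almost the same as the Morawetz-based argument of \cite{FWY}, now extended to large potentials by means of the new Strichartz and smoothing estimates of Section \ref{sec-2}, with the repulsive term $x\cdot\nabla V\le 0$ supplying the good sign in the Morawetz identity just as you describe.
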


Finally, we turn to state our blow-up result. Note that Boulenger and Lenzmann in \cite{B-Lenzmann} have utilized the (localized) Riesz bivariance to get blow-up for biharmonic NLS, then
we can  apply their method to the Biharmonic NLS equation with certain large potential $V$.  For the end, in the following blowup result,
 {\it we assume that $V$ is a nonnegative radial real $C^1$-function of ${\bf R}^{N}$ satisfying $|\nabla V(x)|\le C (1+ |x|)^{-1}$, and set
\begin{align*}
W(x):=4V(x)+x\cdot\nabla V(x)=W_+(x)-W_-(x),
\end{align*}
where $W_\pm(x)$ denote the positive part and negative part of $W(x)$.}

\begin{theorem}\label{thblowup}Suppose that $\lambda=-1$,\ $N\geq 5$, \ $1+\frac{8}{N}<p<1+\frac{8}{N-4}$,  $u_{0}\in H^{2}({\bf R}^{N})$ is radial and $u\in C(I;H^2({\bf R}^N))$ is the corresponding solution to \eqref{1.1} with
maximal forward time interval of existence $I\subset {\bf R}$. Let  $ W\geq0$
or $\|W_-\|_{L^{\frac N4}}$ be sufficiently small. If $u_0$ furthermore satisfies one of the following two conditions:
(i)  $E(u_0)<0$; and (ii) $E(u_0)\geq0$,
$$
M(u_{0})^{\frac{2-s_{c}}{s_{c}}}E(u_{0})<M(Q)^{\frac{2-s_{c}}{s_{c}}}E_{0}(Q),
$$
and
$$
\|u_{0}\|_{L^{2}({\bf R}^{N})}^{\frac{2-s_{c}}{s_{c}}}\|H^{\frac{1}{2}} u_{0}\|_{L^{2}({\bf R}^{N})}
>\|Q\|_{L^{2}({\bf R}^{N})}^{\frac{2-s_{c}}{s_{c}}}\|\Delta Q\|_{L^{2}({\bf R}^{N})},
$$
where $Q$ is the solution of \eqref{1.4} and $H=\Delta^2+V$, then the solution $u\in C([0,T);H^2({\bf R}^N))$ of \eqref{1.1} blows up in finite time, i.e., there exists some $0<T<+\infty$ such that
$\lim_{t\uparrow T}\|\Delta u(t)\|_{L^2}=+\infty$.
\end{theorem}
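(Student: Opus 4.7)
The idea is to adapt the localized Riesz bivariance argument of Boulenger--Lenzmann \cite{B-Lenzmann} to the presence of the potential $V$. Choose a radial cutoff $\varphi_R(x)=R^{2}\varphi(|x|/R)$ with $\varphi(r)=r^{2}/2$ for $r\le 1$, $\varphi''(r)\le 1$ on $[0,\infty)$, and $\varphi$ constant for $r\ge 10$, so that $\nabla\varphi_R$, $D^{2}\varphi_R$, $\Delta\varphi_R$, and $\Delta^{2}\varphi_R$ enjoy the sign and support properties exploited in \cite{B-Lenzmann}. The truncated virial
\begin{equation*}
M_{\varphi_R}[u](t):=2\,\mathrm{Im}\int_{\mathbf{R}^{N}}\overline{u}(t,x)\,\nabla\varphi_R(x)\cdot\nabla u(t,x)\,dx
\end{equation*}
then satisfies $|M_{\varphi_R}[u](t)|\le CR\,\|u_0\|_{L^{2}}\|H^{1/2}u(t)\|_{L^{2}}$ throughout the maximal forward interval $I$ (using Remark \ref{rem1.3} to pass between $\|H^{1/2}u\|_{L^{2}}$ and $\|\Delta u\|_{L^{2}}$).

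Differentiating $M_{\varphi_R}[u]$ along \eqref{1.1}, contributions from the commutators $[\Delta^{2},A]$ and $[V,A]$ (with $A=-2i\nabla\varphi_R\cdot\nabla-i\Delta\varphi_R$) and from the nonlinear term combine into an identity of the form
\begin{equation*}
\frac{d}{dt}M_{\varphi_R}[u](t)=\mathcal{V}[u(t)]-\int_{\mathbf{R}^{N}}W\,|u(t)|^{2}\,dx+\mathcal{R}_R[u(t)],
\end{equation*}
where
\begin{equation*}
\mathcal{V}[u]:=8\|\Delta u\|_{L^{2}}^{2}-\frac{2N(p-1)}{p+1}\|u\|_{L^{p+1}}^{p+1}
\end{equation*}
is the biharmonic virial functional (as in \cite{B-Lenzmann}), $W=4V+x\cdot\nabla V$ is the Pohozaev combination naturally associated with $H=\Delta^{2}+V$, and $\mathcal{R}_R$ is a remainder supported on $\{|x|\ge R\}$ and controlled via $|\nabla V|\lesssim\langle x\rangle^{-1}$ together with the Strauss radial decay $|u(r)|\lesssim r^{-(N-1)/2}\|u\|_{H^{2}}$ (here the radiality of $u_0$ is crucial). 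Under the hypothesis $W\ge 0$ the potential term $-\int W|u|^{2}$ has the good sign and is discarded; under $\|W_-\|_{L^{N/4}}$ small, H\"older plus $H^{2}\hookrightarrow L^{2N/(N-4)}$ (valid for $N\ge 5$) yield $\int W_-|u|^{2}\lesssim\|W_-\|_{L^{N/4}}\|\Delta u\|_{L^{2}}^{2}$, absorbable into the coercive main term.

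It remains to turn the threshold hypotheses (i) or (ii) into a uniform coercive bound $\mathcal{V}[u(t)]\le-\delta\|\Delta u(t)\|_{L^{2}}^{2}$. Under (ii), the sharp Gagliardo--Nirenberg inequality attached to \eqref{1.4}, together with conservation of $M(u)$ and $E(u)$ and a continuity/bootstrap argument, propagates the strict inequality $\|u_0\|_{L^{2}}^{(2-s_c)/s_c}\|H^{1/2}u_0\|_{L^{2}}>\|Q\|_{L^{2}}^{(2-s_c)/s_c}\|\Delta Q\|_{L^{2}}$ to every $t\in I$; since $V\ge 0$ (Remark \ref{rem1.3}), this yields the coercivity. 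Under (i), conservation of energy combined with the algebraic identity
\begin{equation*}
\mathcal{V}[u]=\big[8-N(p-1)\big]\|\Delta u\|_{L^{2}}^{2}-N(p-1)\int V|u|^{2}\,dx+2N(p-1)E(u_0),
\end{equation*}
along with $p>1+8/N$ (so $8-N(p-1)<0$), $V\ge 0$, and $E(u_0)<0$, gives coercivity directly. Assembling everything produces
\begin{equation*}
\frac{d}{dt}M_{\varphi_R}[u](t)\le-\delta\|\Delta u(t)\|_{L^{2}}^{2}+o_R(1)\big(1+\|\Delta u(t)\|_{L^{2}}^{\theta}\big),\qquad\theta<2,
\end{equation*}
so for $R$ sufficiently large the right-hand side is uniformly strictly negative; integration contradicts the a priori bound $|M_{\varphi_R}[u]|\lesssim R$ unless $\|\Delta u(t)\|_{L^{2}}\to\infty$ in finite time. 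The main technical difficulty I anticipate is the careful bookkeeping of the cutoff errors generated by the fourth-order double commutator $[\Delta^{2},[\Delta^{2},\varphi_R]]$ on radial $H^{2}$ functions together with the $V$-dependent terms in $\mathcal{R}_R$: absorbing these lower-order remainders into the coercive piece via Strauss decay is the delicate step and is essentially what drives the restriction $N\ge 5$.
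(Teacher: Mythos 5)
Your proposal follows essentially the same route as the paper: the Boulenger--Lenzmann localized virial $\mathcal M_R(u)$, the Pohozaev combination $W=4V+x\cdot\nabla V$ discarded when $W\ge 0$ or absorbed via H\"older and $H^2\hookrightarrow L^{2N/(N-4)}$ when $\|W_-\|_{L^{N/4}}$ is small, coercivity from $E(u_0)<0$ or from the sharp Gagliardo--Nirenberg threshold, and the concluding nonlinear ODE argument against the bound $|\mathcal M_R|\lesssim R\|\Delta u\|_{L^2}^{1/2}$. The argument is correct; only the final step should be phrased as the comparison ODE analysis of \cite{B-Lenzmann} rather than a direct contradiction with $|\mathcal M_R|\lesssim R$.
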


\begin{remark}\label{rem1.7}
That $\|W_-\|_{L^{\frac N4}}$ is sufficiently small, means that there exists some constant $\delta>0$ such that $\|W_-\|_{L^{\frac N4}}\le \delta$. Clearly, if $V$ is suitably small, then the whole $W$ is small. Nevertheless, the smallness of $V$ is not absolutely necessary to the blowup result above.
Combining with Theorem \ref{thblowup},  we remark that the condition
$$
\|u_{0}\|_{L^{2}({\bf R}^{N})}^{\frac{2-s_{c}}{s_{c}}}\|H^{\frac{1}{2}} u_{0}\|_{L^{2}({\bf R}^{N})}
<\|Q\|_{L^{2}({\bf R}^{N})}^{\frac{2-s_{c}}{s_{c}}}\|\Delta Q\|_{L^{2}({\bf R}^{N})},
$$
is sharp for scattering result in Theorem \ref{th1.2}.
\end{remark}

In the sequel, we only consider the case $\lambda=-1$.
This present paper is organized as follows. We fix notations at the end of Section 1. In Section 2,
We establish some Strichartz type estimates, upon
which we obtain linear scattering. In Section 3, we establish local theory, the small data scattering and the perturbation theory.
The variational structure of the ground state of an elliptic problem is given in Section 4.
In Section 5 we prove a dichotomy proposition of global well-posedness versus blowing up, which
yields the comparability of the total energy and the gradient.
The concentration compactness principle is used in Section 6 to give a critical element, which
yields a contradiction through a virial-type estimate in Section 7, concluding the proof of Theorem \ref{th1.2}. In Section 8, we prove
the blow-up results, based on the argument of Boulenger and Lenzmann \cite{B-Lenzmann}.

\textbf{Notations:}:

we fix notations used throughout the paper. In what follows, we write $A\lesssim B$ to signify
that there exists a constant $C$ such that $A\leq CB$. And we denote $A\sim B$ when
$A\lesssim B\lesssim A$.

Let $L^{q}=L^{q}({\bf R}^{N})$ be the usual Lebesgue spaces, and $L_{I}^{q}L_{x}^{r}$ or $L^q(I,L^r)$ be the space
of measurable functions from an interval $I\subset {\bf R}$ to $L_{x}^{r}$ whose $L_{I}^{q}L_{x}^{r}$-
norm $
\|\cdot\|_{L_{I}^{q}L_{x}^{r}}$ is finite, where
\begin{align}\label{1.11}
\|u\|_{L_{I}^{q}L_{x}^{r}}=\Big(\displaystyle\int_{I}\|u(t)\|_{L_{x}^{r}}^{q}dt\Big)^{\frac{1}{r}}.
\end{align}
When $I={\bf R}$ or $I=[0, T]$, we may use $L_{t}^{q}L_{x}^{r}$ or $L_{T}^{q}L_{x}^{r}$ instead of
$L_{I}^{q}L_{x}^{r}$, respectively. In particular, when $q=r$, we may simply write them as
$L_{t,x}^{q}$ or $L_{T,x}^{q}$, respectively.

Moreover, the Fourier transform on ${\bf R}^{N}$ is defined by
$\hat{f}(\xi)=(2\pi)^{-\frac{N}{2}}\displaystyle\int_{{\bf R}^{N}} e^{-ix\cdot\xi}f(x)dx$.
For $s\in {\bf R}$ and $\sigma\in {\bf R}$, define the inhomogeneous weighted Sobolev space by
$$
H_{\sigma}^{s}({\bf R}^{N})=\{f\in S'({\bf R}^{N}):
\|\langle x\rangle^{\sigma}\langle i\nabla\rangle^{s}f\|_{L^{2}({\bf R}^{N})}<\infty\}
$$
and the homogeneous weighted Sobolev space by
$$\dot{H}_{\sigma}^{s}({\bf R}^{N})=\{f\in S'({\bf R}^{N}):
\|\langle x\rangle^{\sigma} |\nabla|^{s}f\|_{L^{2}({\bf R}^{N})}<\infty\},
$$
where $S'({\bf R}^{N})$ denotes the space of tempered distributions and $\langle x\rangle=(1+|x|^{2})^{\frac{1}{2}}$.
When $\sigma=0$, $H^{s}({\bf R}^{N})$ ($\dot{H}^{s}({\bf R}^{N})$) denotes the space $H_{0}^{s}({\bf R}^{N})$ ($\dot{H}_{0}^{s}({\bf R}^{N})$)
and when $s=0$, $L_{\sigma}^{2}({\bf R}^{N})$ denotes $H_{\sigma}^{0}({\bf R}^{N})$.

Given $p\geq 1$, let $p'$ be the conjugate of $p$, that is $\frac{1}{p}+\frac{1}{p'}=1$.

{\bf Acknowledgement}\ \ The first author is financially supported by the
China National Science Foundation (No.11301564, 11771469), the second author is
financially supported by the
China National Science Foundation ( No. 11771165 and
11571131), and the third author is financially supported by the
China National Science Foundation( No. 11771165).



\section{Strichartz type estimates associated with $H=\Delta^2+V$}\label{sec-2}
\setcounter{equation}{0}

We start in this section with recalling the Strichartz estimates of linear bi-harmonic Schr\"odinger equations with $V=0$.
 We say a pair $(q, r)$ is
{\it Schrodinger admissible}, or {\it S-admissible} for short, if $2\leq q, r\leq\infty$, $(q, r, N)\neq (2, \infty, 2)$ and
$$
\frac{2}{q}+\frac{N}{r}=\frac{N}{2}.
$$
Also, we use the terminology that a pair $(q, r)$ is {\it biharmonic admissible}, or {\it B-admissible} for short, if
$2\leq q, r\leq\infty$, $(q, r, N)\neq (2, \infty, 4)$ and
$$
\frac{4}{q}+\frac{N}{r}=\frac{N}{2}.
$$
We define the Strichatz norm by
$$
\|u\|_{S(L^{2}, I)}:=\sup_{(q,r):{\rm B-admissible}}\Vert u\Vert_{L^q(I,L^r)}
$$
and its dual norm by
$$
\|u\|_{S'(L^{2}, I)}:=\inf_{(q,r):{\rm B-admissible}}\Vert u\Vert_{L^{q'}(I,L^{r'})}
$$

The Strichartz estimates are stated as follows ( see e.g. \cite{Pau3} ):
\begin{lemma} \label{propo2.1}Let $u\in C(I, H^{-4}({\bf R}^{N}))$ be a solution of
 \begin{equation}\label{2.1}
 u(t)=e^{it\Delta^2}u_0+i\int_0^te^{i(t-s)\Delta^2}h(s)ds.
\end{equation}
Then we have
 \begin{equation}\label{2.2}
\Vert u\Vert_{S(L^{2}, I)}\leq C \ \Big(\Vert u_0\Vert_{L^2}+ \Vert
h\Vert_{S'(L^{2}, I)}\Big).
\end{equation}
More generally, for any S-admissible pairs $(q,r)$
and $(a,b)$, and any $s\geq0$,
 \begin{equation}\label{2.3}
\Vert |\nabla|^{s}u\Vert_{L^q(I,L^r)}\leq C\  \Big(\Vert
|\nabla|^{s-\frac2q}u_0\Vert_{L^2}+ \Vert
|\nabla|^{s-\frac2q-\frac2a}h\Vert_{L^{a'}(I,L^{b'})}\Big).
\end{equation}
\end{lemma}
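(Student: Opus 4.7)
As \eqref{2.2}--\eqref{2.3} are the classical Strichartz estimates for the free biharmonic Schr\"odinger group, quoted e.g.\ from Pausader \cite{Pau3}, my plan is to indicate how they follow from the two dispersive ingredients \eqref{freeL1Linfty}--\eqref{freeL1Linfityepsilon} already recalled in the excerpt, combined with standard abstract machinery.

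For the biharmonic-admissible bound \eqref{2.2}, the two ingredients fed into Keel--Tao \cite{Keel} are immediate. The unitarity $\|e^{it\Delta^2}\|_{L^2 \to L^2} = 1$ follows from the spectral theorem applied to the self-adjoint operator $\Delta^2$, while the $L^1 \to L^\infty$ dispersive bound is \eqref{freeL1Linfty} at $\alpha = 0$. Complex interpolation gives the full dispersive family $\|e^{it\Delta^2}\|_{L^{r'} \to L^r} \lesssim |t|^{-N(1-2/r)/4}$ for $2 \leq r \leq \infty$. The abstract Keel--Tao theorem with dispersive exponent $\sigma = N/4$ then produces $\|e^{it\Delta^2} u_0\|_{L^q L^r} \lesssim \|u_0\|_{L^2}$ for every pair $(q,r)$ satisfying $\frac{4}{q} + \frac{N}{r} = \frac{N}{2}$, including the endpoint $r = 2N/(N-4)$ in dimensions $N \geq 5$. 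Duality handles the dual estimate, and bilinear $TT^*$ together with the Christ--Kiselev lemma (to enforce $s < t$) handles the inhomogeneous piece with two possibly distinct B-admissible pairs, producing \eqref{2.2}.

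For the Schr\"odinger-admissible refinement \eqref{2.3}, the essential new ingredient is the smoothing kernel bound \eqref{freeL1Linfityepsilon} of Ben-Artzi--Koch--Saut, which upgrades to
\begin{equation*}
\||\nabla|^\alpha e^{it\Delta^2}\|_{L^1 \to L^\infty} \lesssim |t|^{-(N+\alpha)/4}, \qquad 0 \leq \alpha \leq N.
\end{equation*}
I would perform a Littlewood--Paley decomposition $u_0 = \sum_k P_k u_0$ and analyse each frequency piece separately. For a frequency-$2^k$ input, the rescaling $(t, x) \mapsto (2^{4k} t, 2^k x)$ reduces matters to a frequency-$1$ problem, where the kernel of $P_0 e^{it\Delta^2} P_0$ is uniformly bounded near $t = 0$ (by Bernstein) and decays dispersively for large $|t|$. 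A $TT^*$-Keel--Tao argument applied to this bounded kernel, leveraging the additional smoothing from the displayed bound above, yields
\begin{equation*}
\|e^{it\Delta^2} P_k u_0\|_{L^q(I, L^r)} \lesssim 2^{-2k/q}\,\|P_k u_0\|_{L^2}
\end{equation*}
for every S-admissible pair $(q, r)$. Since $\||\nabla|^{-2/q} P_k u_0\|_{L^2} \sim 2^{-2k/q} \|P_k u_0\|_{L^2}$, summing dyadically via the square-function equivalence $\||\nabla|^s f\|_{L^2}^2 \sim \sum_k 2^{2ks} \|P_k f\|_{L^2}^2$ gives the homogeneous part of \eqref{2.3}. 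The inhomogeneous piece is then handled by the analogous bilinear $TT^*$ with two S-admissible pairs $(q, r)$ and $(a, b)$, splitting the total derivative deficit $s - \frac{2}{q} - \frac{2}{a}$ between the $L^2$ data and the $L^{a'}(I, L^{b'})$ forcing.

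The main delicate point is the endpoint S-admissible pair, where the time-integrability of the dispersive kernel is borderline and Keel--Tao must be applied in its endpoint form together with the extra $|\nabla|^\alpha$-smoothing from \eqref{freeL1Linfityepsilon} to recover the missing decay. Once this endpoint is secured, the remainder is routine exponent bookkeeping.
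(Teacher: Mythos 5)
The paper does not prove this lemma; it quotes it from Pausader \cite{Pau3} and only indicates (around \eqref{freeL1Linfityepsilon}--\eqref{2.3}) that it follows from the Ben-Artzi--Koch--Saut kernel bounds via Keel--Tao, which is exactly the route you take. Your outline is correct and standard: unitarity plus the $|t|^{-N/4}$ bound gives the B-admissible family \eqref{2.2}, and the frequency-localized version (where the extra decay encoded in the $|\nabla|^{\alpha}$-weighted kernel estimate upgrades the unit-frequency decay to the Schr\"odinger rate $|t|^{-N/2}$, yielding the $2^{-2k/q}$ gain after rescaling) gives \eqref{2.3} upon square-function summation and Christ--Kiselev for the retarded term.
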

Note that from Sobolev embedding inequality,  the estimate \eqref{2.3} implies the estimate \eqref{2.2}.
Thus a direct consequence of \eqref{2.3} and the  Sobolev's inequality is that,
  if $u\in C(I,H^{-4}({\bf{R}}^N))$ be a solution of \eqref{2.1}
 with $u_0\in\dot H^2$ and $\nabla h\in L^{2}(I,L^{\frac{2N}{N+2}})$,
 then  $u\in C(I,\dot H^{2}({\bf{R}}^N))$
 and for any B-admissible pairs $(q,r)$,
 \begin{equation}\label{2.4}
\Vert\Delta u\Vert_{L^q(I,L^r)}\leq C \ \Big(\Vert \Delta u_0\Vert_{L^2}+
\Vert \nabla h\Vert_{L^{2}(I,L^{\frac{2N}{N+2}})}\Big).
\end{equation}
A key feature of \eqref{2.4} is that the second derivative of $u$ is estimated using only
one derivative of the forcing term $h$. The same argument gives
 \begin{equation}\label{smooth}
\Vert|\nabla| u\Vert_{L^q(I,L^r)}\leq C\ \Big(\Vert |\nabla| u_0\Vert_{L^2}+
\Vert  h\Vert_{L^{2}(I,L^{\frac{2N}{N+2}})}\Big).
\end{equation}


If $u_0\in \dot{H}^{s}(\mathbb{R}^N)$, then we also can establish a $\dot{H}^{s}$-version to the Strichartz inequality \eqref{2.2}. More precisely, we introduce that a pair  $(q,r)$ is {\it $\dot{H}^{s}$-biharmomic admissible} and denote it by
 $(q,r)\in\Lambda_{s}$ if $0\leq s<2$ and
$$\frac{4}{q}+\frac{N}{r}=\frac{N}{2}-s,\ \ \ \frac{2N}{N-2s}\leq r\leq\frac{2N}{N-4}.$$
Correspondingly, we call the pair $(q',r')$
{\it dual~$\dot{H}^{s}$-biharmomic  admissible}, denoted by
$(q',r')\in\Lambda'_{s}$, if
 $(q,r)\in\Lambda_{-s}$ and $(q',r')$ is the conjugate exponent pair of $(q,r).$
In particular, {\it $(q,r)\in\Lambda_0$ is just a B-admissible pair, which is always denoted by $(q,r)\in\Lambda_ B$.}

We also define the exotic Strichartz norm by
$$
\|u\|_{S(\dot{H}^{s},I)}:=\sup_{(q,r)\in\Lambda_{s}}\|u\|_{L^q(I;L^r)},
$$
and its dual norm by
$$
\|u\|_{S'(\dot{H}^{-s},I)}:=\inf_{(q,r)\in\Lambda_{-s}}\|u\|_{L^{q'}(I;L^{r'})}.
$$
Now we can infer
the following  $\dot{H}^{s}$-Strichartz estimates on  $I=[0,T]$:
\begin{align}\label{2.5}
\|u\|_{S(\dot{H}^{s},I)}=\Big\|e^{it\Delta^2}u_0+i\int_0^te^{i(t-s)\Delta^2}h(\cdot,s)ds\Big\|_{S(\dot{H}^{s},I)}\leq C\ \Big(\|u_0\|_{\dot{H}^{s}}+\|h\|_{S'(\dot{H}^{-s},I)}\Big).
\end{align}
If the time interval $I$ is not specified, we take $I={\bf R}$.
We also refer to \cite{Fos, Kato, Keel, Pau3, Tag, Vile} for more discussion on the homogeneous and inhomogeneous type Strichartz estimates.

Next, we need to establish the Strichartz type estimates corresponding to \eqref{2.2}-\eqref{2.5} for solutions of
inhomogeneous linear biharmonic Schr\"{o}dinger equation with potential $V$:
\begin{equation}\label{2.6}
\left\{ \begin{aligned}
  iu_{t}+(\Delta^2+V)u+h=0,\;\;(t,x) \in {{\bf R}\times{\bf R}^{N}}, \\
  u(0, x)=u_{0}(x)\in H^{2}({\bf R}^{N}),
                          \end{aligned}\right.
                          \end{equation}
where $N\geq 5$. For the purpose, we now recall the following Local decay estimate, Jensen-Kato estimate and Strichartz type estimate established by Feng-Soffer-Yao \cite{FSY}.

\begin{lemma}\label{lemFSY}
Let $N\ge 5$, $V$ satisfy that $\langle x\rangle^{\beta}V(x)
\in L^{\infty}({\bf R}^{N})$ for some $\beta>N+4$.
And assume that operator $H=\Delta^2+V$ has no positive embedded
eigenvalues and 0 is a regular point for $H$. If $u$ be the solution of the initial value problem \eqref{2.6}, then the following estimates hold:
 \begin{equation}\label{2.4'}
\displaystyle\int_{{\bf R}}\big\Vert\langle x\rangle^{-\sigma} e^{-itH}P_{ac}u_{0}\big\Vert_{L^{2}({\bf R}^{N})}^{2}dt\leq C\ \|u_{0}\|_{L^{2}({\bf R}^{N})}^{2} \;\; {\text for} \;\;\sigma>1/2,
\end{equation}
 \begin{equation}\label{2.3'}
\Vert e^{-itH}P_{ac}u_{0}\Vert_{L_{-\sigma}^{2}({\bf R}^{N})}\leq C\  \langle t\rangle^{-N/4}\|u_{0}\|_{L_{\sigma}^{2}({\bf R}^{N})} \;\; {\text for} \;\;\sigma>N/2+2,
\end{equation}
\begin{equation}\label{2.2'}
\Vert P_{ac}u\Vert_{S(L^{2}, I)}\leq C\  \big(\Vert u_0\Vert_{L^2}+ \Vert
h\Vert_{S'(L^{2}, I)}\big),
\end{equation}
where $L^2_{\sigma}(\mathbb{R}^{d})$ is the weighted $L^2$-function space, $P_{ac}$ is the projection on the absolutely continuous spectrum of $H$, and $S(L^{2}, I), S(L^{2}, I)$ are the Strichartz norm and its dual norm in Proposition \ref{propo2.1}, respectively.
\end{lemma}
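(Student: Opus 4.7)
The plan is to attack all three estimates through a single spectral-theoretic framework built around the limiting absorption principle (LAP) for $H=\Delta^2+V$. The key algebraic reduction is the factorization
\begin{equation*}
(\Delta^2-\mu^4)^{-1}=\frac{1}{2\mu^2}\bigl[(-\Delta-\mu^2)^{-1}-(-\Delta+\mu^2)^{-1}\bigr],\qquad\mu>0,
\end{equation*}
which expresses the free biharmonic resolvent in terms of two Schr\"odinger resolvents (one at a positive, one at a negative energy) and lets me inherit weighted-$L^2$ bounds from the classical Agmon--Kato--Kuroda theory for $-\Delta+V$. First I would establish a uniform LAP bound
\begin{equation*}
\sup_{\lambda\in\R}\Bigl\|\langle x\rangle^{-\sigma}R_H(\lambda\pm i0)\langle x\rangle^{-\sigma}\Bigr\|_{L^2\to L^2}<\infty,\qquad \sigma>1/2,
\end{equation*}
by combining the factorization above with the second resolvent identity $R_H=R_0-R_0VR_H$, the compactness of $VR_0(\lambda\pm i0)$ between weighted $L^2$ spaces (guaranteed by $\langle x\rangle^\beta V\in L^\infty$), and the Fredholm alternative; the hypotheses that $H$ has no positive embedded eigenvalues and that $0$ is a regular point are exactly what rule out the exceptional spectral parameters. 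The local decay estimate \eqref{2.4'} is then immediate from Kato's $H$-smoothness theorem applied to the multiplier $\langle x\rangle^{-\sigma}$.

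For the Jensen--Kato estimate \eqref{2.3'} I would insert the LAP into the spectral representation
\begin{equation*}
e^{-itH}P_{ac}=\frac{1}{2\pi i}\int_0^\infty e^{-it\lambda}\bigl[R_H(\lambda+i0)-R_H(\lambda-i0)\bigr]\,d\lambda,
\end{equation*}
split it into low- and high-energy pieces with a smooth cutoff, and in each piece reduce to an oscillatory integral in $\lambda$. On the high-energy side, repeated integration by parts using the $\lambda$-regularity of the weighted resolvent (which is where the heavy weight $\sigma>N/2+2$ and the strong decay $\beta>N+4$ enter, guaranteeing enough differentiability of the Neumann inverse $(I+R_0V)^{-1}$), combined with stationary-phase bounds for the free biharmonic propagator inherited from \eqref{freeL1Linfityepsilon}, yields the $\langle t\rangle^{-N/4}$ rate. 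On the low-energy side, the ``$0$ is a regular point'' hypothesis supplies a smooth expansion of $R_H(\lambda)$ near $\lambda=0$ whose leading term, after $\lambda$-integration, again produces $\langle t\rangle^{-N/4}$ decay; the constraint $N\ge 5$ is what makes these oscillatory integrals absolutely convergent in the chosen weighted norms.

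Finally, for the Strichartz bound \eqref{2.2'} I would not attempt a pointwise $L^1\to L^\infty$ dispersive estimate for $e^{-itH}P_{ac}$; instead I would use the Duhamel identity
\begin{equation*}
e^{-itH}P_{ac}u_0=e^{it\Delta^2}P_{ac}u_0-i\int_0^t e^{i(t-s)\Delta^2}\,V\,e^{-isH}P_{ac}u_0\,ds,
\end{equation*}
apply the free Strichartz estimates of Lemma \ref{propo2.1} to both terms, and control the $V$-contribution through Kato smoothing by writing $V=\langle x\rangle^{-\sigma}(\langle x\rangle^{\sigma}V)$: one weight absorbs into $e^{-isH}P_{ac}u_0$, which is $L^2_{t,x}$-integrable by \eqref{2.4'}, while the other dualizes into the free Strichartz framework via the adjoint $TT^*$ formalism. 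A routine bootstrap/fixed-point argument in Strichartz norms then delivers \eqref{2.2'} for every biharmonic-admissible pair. The main technical obstacle throughout is the low-energy analysis at $\lambda=0$: translating the abstract regular-point assumption into quantitative inverse bounds for $I+R_0(\lambda)V$ in weighted spaces with $\sigma>N/2+2$, together with enough $\lambda$-smoothness to justify the required integrations by parts, is precisely what forces the quantitative hypotheses $N\ge 5$ and $\beta>N+4$.
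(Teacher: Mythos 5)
The paper does not prove this lemma at all: it is quoted verbatim from Feng--Soffer--Yao \cite{FSY}, so there is no internal argument to compare against. Your outline reconstructs essentially the strategy of that reference --- the partial-fraction factorization of the free biharmonic resolvent into two Schr\"odinger resolvents, a uniform limiting absorption principle via the second resolvent identity, compactness and the Fredholm alternative (with the no-embedded-eigenvalue and zero-regularity hypotheses excluding the exceptional set), Kato's smoothing theorem for \eqref{2.4'}, a Jensen--Kato high/low-energy analysis of the spectral representation for \eqref{2.3'}, and a Duhamel-plus-local-decay argument for \eqref{2.2'} --- and I see no conceptual gap in it. Two technical points deserve care if you were to write this out in full: to extract the $\langle t\rangle^{-N/4}$ rate you need to substitute $\lambda=\mu^{4}$ in the spectral integral so that the resolvent kernel bounds inherited from \eqref{freeL1Linfityepsilon} and the stationary-phase analysis apply in the natural variable $\mu$; and in the Strichartz step the passage from the untruncated $TT^{*}$ estimate to the retarded integral $\int_{0}^{t}$ requires the Christ--Kiselev lemma, which fails at the endpoint $q=2$ that is included in the supremum defining $S(L^{2},I)$ for $N\ge 5$, so that pair must be treated separately (for instance via the Keel--Tao double-endpoint inhomogeneous estimate).
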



We say that a resonance occurs at zero for $H=\Delta^2+V$, provided there is a distributional solution $u$ of the equation
$(\Delta^2+V)u=0$ such that for any $s>4-N/2$, $u\in L^{2}_{-s}(\mathbf{R}^{N})\setminus L^{2}(\mathbf{R}^{N})$. We call that zero is a regular point of $H$, which means that 0 is neither eigenvalue nor resonance.  Moreover, when $N>8$, $H=\Delta^2+V$ has no zero resonance( see \cite[Remark 2.8]{FSY}).

Now, based on the local decay estimates and Kato-Jensen estimates of Lemma \ref{lemFSY} above, by putting the further repulsive condition on $V$  and restriction on dimension $N$, we will further establish $\dot{H}^s$-Strichatz estimates with potential and global smoothing estimate. For the purpose, we will use the following conditions:
\vskip0.3cm
 $(\mathfrak{C}_1)$: \ {\it \ $V$ is a real $C^1$-function of ${\bf R}^{N}$ satisfying that $x\cdot \nabla V\leq 0$ and $|V(x)|+|\nabla V|\le C (1+ |x|)^{-\beta}$ for some $\beta>N+4$.}

 $(\mathfrak{C}_2)$: {\it \ \ $0$ is not a resonance of $H=\Delta^2+V$.  }
\vskip0.3cm

It was well-known from Virial's argument that the repulsive condition $x\cdot \nabla V\leq 0$ makes that the operator $H=\Delta^2+V$ has no any eigenvalue in $\mathbf{R}$ ( i.e. $\sigma_p(H)=\emptyset$ ), e.g. see Reed and Simon \cite[Theorem XIII.59]{RS2} for Schr\"odinger operator, and Feng \cite{feng} for general operators $P(D)+V$.  {\it Hence it follows from the assumptions on $V$ and dimension $N$ in Theorem \ref{th1.2}
that the operator $H=\Delta^2+V$ has no any eigenvalues ( i.e . $\sigma(H)=\sigma_{ac}(H)=[0,\infty)$ ), and zero is not a resonance. }Thus, $P_{ac}$ is an identity operator, and the above estimates \eqref{2.4'}-\eqref{2.2'}  still hold true for $u$ in place of $P_{ac}u$

\vskip0.3cm

\begin{proposition}\label{lem2.1}
Let $0\le s<2$, $N>4+2s$, $H=\Delta^2+V$ and $V$ satisfy the conditions $(\mathfrak{C}_1)$-$(\mathfrak{C}_2)$ .  If $u$ be the solution of the initial value problem \eqref{2.6}, then we have
\begin{align}\label{2.7}
\|u\|_{S(\dot{H}^{s},I)}=\Big\|e^{itH}u_0+i\int_0^te^{i(t-s)H}h(\cdot,s)ds\Big\|_{S(\dot{H}^{s},I)}\leq C\ \Big(\|u_0\|_{\dot{H}^{s}}+\|h\|_{S'(\dot{H}^{-s},I)}\Big).
\end{align}
where $S(\dot{H}^{s},I)$ and $S'(\dot{H}^{s},I)$ are the exotic Strichartz norm and its dual norm in \eqref{2.5}.
\end{proposition}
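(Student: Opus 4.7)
The plan is a perturbative argument based on the free biharmonic Strichartz inequality \eqref{2.5} and the weighted decay estimates of Lemma \ref{lemFSY}. First observe that under hypotheses $(\mathfrak{C}_1)$-$(\mathfrak{C}_2)$ a Virial-type identity forces $\sigma_p(H)=\emptyset$ and $0$ is not a resonance, so that $P_{ac}=I$ and the estimates \eqref{2.4'}-\eqref{2.2'} apply to $e^{itH}$ itself without projection.

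Rewriting $iu_t+\Delta^2 u=-(Vu+h)$ and applying Duhamel's formula with respect to the free propagator gives
$$u(t)=e^{it\Delta^2}u_0+i\int_0^te^{i(t-\tau)\Delta^2}\bigl(Vu(\tau)+h(\tau)\bigr)\,d\tau.$$
Invoking the free $\dot H^s$-Strichartz bound \eqref{2.5} yields
$$\|u\|_{S(\dot H^s,I)}\le C\bigl(\|u_0\|_{\dot H^s}+\|Vu\|_{S'(\dot H^{-s},I)}+\|h\|_{S'(\dot H^{-s},I)}\bigr),$$
so the task reduces to controlling $\|Vu\|_{S'(\dot H^{-s},I)}$ by $\|u\|_{S(\dot H^s,I)}$. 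I would test against the primal pair $(q,r)=\bigl(\tfrac{4}{s+2},\tfrac{2N}{N-4}\bigr)\in\Lambda_{-s}$ (admissible since $N>4+2s$), so the conjugate pair is $(q',r')=\bigl(\tfrac{4}{2-s},\tfrac{2N}{N+4}\bigr)$. A spatial H\"older split using $\tfrac{N+4}{2N}=\tfrac{4}{N}+\tfrac{N-4}{2N}$ then gives
$$\|Vu\|_{L^{q'}_tL^{r'}_x}\le\|V\|_{L^{N/4}}\,\|u\|_{L^{q'}_tL^{2N/(N-4)}_x}\le C\|V\|_{L^{N/4}}\|u\|_{S(\dot H^s,I)},$$
where one checks that $(q',\tfrac{2N}{N-4})\in\Lambda_s$, and $V\in L^{N/4}$ follows from $|V|\lesssim\langle x\rangle^{-\beta}$ with $\beta>N+4$.

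Because $\|V\|_{L^{N/4}}$ is not assumed small, a single absorption on the left does not close. I would instead iterate the Duhamel expansion a finite number of times: each further iteration replaces an internal $e^{i\tau H}u_0$ by $e^{i\tau\Delta^2}u_0$ plus a correction, and the correction is controlled by a weighted H\"older split on $V$ combined with the local smoothing \eqref{2.4'} and the Kato--Jensen decay $\langle t\rangle^{-N/4}$ from \eqref{2.3'}. This produces an absolutely convergent geometric-series estimate and yields the homogeneous Strichartz bound; the full inhomogeneous statement then follows by a Christ--Kiselev argument applied to the kernel splitting $t>\tau$ versus $t<\tau$, together with $TT^*$ duality.

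The principal obstacle is the range $s\in[1,2)$, where the fractional derivative $|\nabla|^s$ cannot be distributed across $Vu$ through a naive Leibniz rule, as $V$ is only $C^1$. To circumvent this, I would work on the spectral side by replacing $|\nabla|^s$ with $H^{s/4}$, using the norm equivalence $\|H^{s/4}f\|_{L^2}\sim\||\nabla|^sf\|_{L^2}$ that follows, for $0\le s\le 2$, from $V\ge 0$, $V\in L^{N/4}$, Sobolev embedding at dimension $N>4+2s$, and interpolation (cf.\ Remark \ref{rem1.3}). Because $H^{s/4}$ commutes with $e^{itH}$, the $\dot H^s$-Strichartz inequality reduces to the $L^2$-Strichartz bound \eqref{2.2'} applied to the data $H^{s/4}u_0$ and forcing $H^{s/4}h$, closing the proof.
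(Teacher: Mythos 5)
There is a genuine gap at the absorption step, and it is exactly the point where the largeness of $V$ matters. After reducing to the bound $\|Vu\|_{S'(\dot H^{-s},I)}\lesssim \|V\|_{L^{N/4}}\|u\|_{S(\dot H^{s},I)}$, you acknowledge that the constant is not small and propose to close by ``iterating the Duhamel expansion a finite number of times'' to obtain ``an absolutely convergent geometric-series estimate.'' This does not work: each iteration of $u\mapsto i\int_0^t e^{i(t-\tau)\Delta^2}Vu(\tau)\,d\tau$ contributes a factor comparable to $C\|V\|_{L^{N/4}}$, so the resulting series is $\sum_n (C\|V\|_{L^{N/4}})^n$, which diverges precisely in the large-potential regime the proposition is designed for; truncating after finitely many steps leaves a remainder carrying the same large factor. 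The Christ--Kiselev and $TT^*$ remarks do not repair this, since they address the time-ordering of the inhomogeneous term, not the size of the perturbation. The paper's proof avoids absorption entirely: it H\"older-splits $Vu$ as $\|\langle x\rangle^{\sigma}V\|_{L^{N/(2-s)}}\,\|\langle x\rangle^{-\sigma}u\|_{L^2_tL^2_x}$, placing the solution in a \emph{weighted local-smoothing norm} rather than back into the Strichartz norm, and then bounds $\|\langle x\rangle^{-\sigma}u\|_{L^2_tL^2_x}$ directly by an exchange trick: writing $\phi(t)=e^{it\Delta^2}u_0$ and inverting the Duhamel relation to get $u_1(t)=\phi(t)+i\int_0^t e^{i(t-\tau)H}V\phi(\tau)\,d\tau$, so that the \emph{perturbed} propagator acts on $V\phi$ with the free flow inside. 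The weighted norm of the integral term is then controlled by the Kato--Jensen decay \eqref{2.3'}, which gives a convolution kernel $\langle t-\tau\rangle^{-N/4}\in L^1_t$ for $N\ge5$, and Young's inequality. Since \eqref{2.3'} is a spectral (non-perturbative) estimate, no smallness of $V$ is needed. This single idea is what your argument is missing.

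A secondary issue: your final paragraph worries about distributing $|\nabla|^{s}$ across $Vu$ for $s\in[1,2)$, but this issue never arises. The exotic norms $S(\dot H^{s},I)$ and $S'(\dot H^{-s},I)$ are pure mixed Lebesgue norms with no derivatives; the only place $\dot H^{s}$ appears is on the initial datum, where the free estimate \eqref{2.5} already handles it. Moreover, the proposed fix is not sound as stated: the $\dot H^{s}$-admissible pairs have scaling $\frac4q+\frac Nr=\frac N2-s$, different from the $L^2$-admissible scaling, so commuting $H^{s/4}$ through $e^{itH}$ and invoking \eqref{2.2'} does not yield \eqref{2.7} without an additional Sobolev embedding and an $L^{r}$ (not just $L^2$) equivalence $\|H^{s/4}f\|_{L^{r}}\sim\||\nabla|^{s}f\|_{L^{r}}$, which requires the separate argument of Lemma \ref{acta}.
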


\begin{proof} The solution $u$ to the problem \eqref{2.6} can be expressed as
\begin{align}\label{2.8'}
u(t,x)=e^{itH}u_{0}+i\displaystyle\int_{0}^{t}e^{i(t-s)H}h(s)ds\doteq u_{1}(t,x)+u_{2}(t,x),
\end{align}
where $u_{1}$ and $u_{2}$ may also be expressed respectively as
\begin{align}\label{2.9'}
u_{1}(t,x)=e^{it\Delta^2}u_{0}+i\displaystyle\int_{0}^{t}e^{i(t-s)\Delta^2}Vu_{1}(s)ds
\end{align}
and
\begin{align}\label{2.10'}
u_{2}(t,x)=i\displaystyle\int_{0}^{t}e^{i(t-s)\Delta^2}Vu_{2}(s)ds
+i\displaystyle\int_{0}^{t}e^{i(t-s)\Delta^2}h(s)ds.
\end{align}

We first use the Jensen-Kato type decay estimate \eqref{2.3'} to control $u_{1}(t,x)$.

Using \eqref{2.5} and H\"{o}lder inequality successively yields that
\begin{align}\label{2.11'}
\Vert u_{1}\Vert_{S(\dot{H}^{s}, I)}
\leq & C\Big(\Vert u_0\Vert_{\dot{H}^{s}}+\Vert Vu_{1}\Vert_{L_{I}^{2}L_{x}^{\frac{2N}{N+4-2s}}}\Big)\nonumber\\
\leq & C\Big(\Vert u_0\Vert_{\dot{H}^{s}}+\Vert \langle x\rangle^{\sigma}V\Vert_{L^{\frac{N}{2-s}}}\ \Vert\langle x\rangle^{-\sigma} u_{1}\Vert_{L_{I}^{2}L_{x}^{2}}\Big).
\end{align}
Now we aim to show that
\begin{align}\label{2.12'}
\Vert\langle x\rangle^{-\sigma} u_{1}\Vert_{L_{I}^{2}L_{x}^{2}}\lesssim \Vert u_0\Vert_{\dot{H}^{s}}.
\end{align}
Consider the Cauchy problem
\begin{equation}\label{2.13'}
\left\{ \begin{aligned}
  i\phi_{t}=-\Delta^2\phi=-H\phi+V\phi, \\
  u(0, x)=u_{0}(x)\in\dot{H}^{s}(\mathbf{R}^N).
                          \end{aligned}\right.
                          \end{equation}
Then Duhamel formula for the solution $\phi$ gives
\begin{align}\label{2.14'}
\phi(t)=e^{it\Delta^2}u_{0}=e^{itH}u_{0}-i\displaystyle\int_{0}^{t}e^{i(t-s)H}V\phi(s)ds=u_{1}(t)-i\displaystyle\int_{0}^{t}e^{i(t-s)H}V\phi(s)ds,
\end{align}
Hence
\begin{align}\label{2.15'}
u_{1}(t)=\phi(t)+i\displaystyle\int_{0}^{t}e^{i(t-s)H}V\phi(s)ds=e^{it\Delta^2}u_{0}+i\displaystyle\int_{0}^{t}e^{i(t-s)H}V\phi(s)ds.
\end{align}
By H\"{o}lder inequality, \eqref{2.2} and Sobolev embedding estimates, we have
\begin{align}\label{2.16'}
\Vert\langle x\rangle^{-\sigma} \phi\Vert_{L_{I}^{2}L_{x}^{2}}=&\Vert\langle x\rangle^{-\sigma} e^{itH_{0}}u_{0}\Vert_{L_{I}^{2}L_{x}^{2}}\nonumber\\
\le  & \|\langle x\rangle^{-\sigma}\|_{L_{x}^{\frac{N}{2+s}}}\|e^{itH_{0}}u_{0}\|_{L_{I}^{2}L_{x}^{\frac{2N}{N-4-2s}}}\nonumber\\
\lesssim & \Vert u_0\Vert_{\dot{H}^{s}}.
\end{align}
The second term can be estimated by the Jensen-Kato type decay estimates \eqref{2.3'} and Young inequality:
\begin{align}\label{2.17'}
\Big\|\langle x\rangle^{-\sigma}\displaystyle\int_{0}^{t}e^{i(t-s)H}V\phi(s)ds\Big\|_{L_{I}^{2}L_{x}^{2}}
\le&\Big\|\displaystyle\int_{0}^{t}\|\langle x\rangle^{-\sigma}e^{i(t-s)H}V\phi(s)\|_{L_{x}^{2}}ds\Big\|_{L_{I}^{2}}\nonumber\\
\lesssim &\Big\|\displaystyle\int_{0}^{t}\langle t-s\rangle^{-\frac{N}{4}}\|\langle x\rangle^{\sigma} V\phi(s)\|_{L_{x}^{2}}ds\Big\|_{L_{I}^{2}}\nonumber\\
\lesssim &\|\langle x\rangle^{\sigma} V\phi(s)\|_{L_{I}^{2}L_{x}^{2}}\lesssim \|\langle x\rangle^{2\sigma} V\|_{L_{x}^{\infty}}\|\langle x\rangle^{-\sigma}\phi\|_{L_{I}^{2}L_{x}^{2}}\nonumber\\
\lesssim &\Vert u_0\Vert_{\dot{H}^{s}}.
\end{align}
Putting \eqref{2.16'} and \eqref{2.17'} together gives the desired \eqref{2.12'}.

Next, by using the same argument we will show that
\begin{align}\label{2.18'}
\Vert u_{2}\Vert_{S(\dot{H}^{s}, I)}\lesssim  \Vert h\Vert_{S'(\dot{H}^{-s}, I)}.
\end{align}
Indeed, using \eqref{2.5} gives that
\begin{align}\label{2.19'}
\Vert u_{2}\Vert_{S(\dot{H}^{s}, I)}
\leq & C\ \Big(\Vert h\Vert_{S'(\dot{H}^{-s}, I)})+\Vert Vu_{2}\Vert_{L_{I}^{2}L_{x}^{\frac{2N}{N+4-2s}}}\Big)\nonumber\\
\leq & C\ \Big(\Vert h\Vert_{S'(\dot{H}^{-s}, I)})+\Vert \langle x\rangle^{\sigma}V\Vert_{L^{\frac{N}{2-s}}}\Vert\langle x\rangle^{-\sigma} u_{2}\Vert_{L_{I}^{2}L_{x}^{2}}\Big).
\end{align}
Now we aim to show that
\begin{align}\label{2.20'}
\Vert\langle x\rangle^{-\sigma} u_{2}\Vert_{L_{I}^{2}L_{x}^{2}}\lesssim \Vert h\Vert_{S'(\dot{H}^{-s}, I)}.
\end{align}
Consider the Cauchy problem
\begin{equation}\label{2.21'}
\left\{ \begin{aligned}
  i\phi_{t}=-\Delta^2\phi-h(t)=-H\phi+V\phi-h(t), \\
  u(0, x)=0.\quad \quad \quad\quad \quad \;\;\quad\quad \quad\quad
                          \end{aligned}\right.
                          \end{equation}
Then Duhamel formula for the solution $\phi$ reads
\begin{align}\label{2.22'}
\phi(t)=i\displaystyle\int_{0}^{t}e^{i(t-s)H_{0}}h(s)ds=-i\displaystyle\int_{0}^{t}e^{i(t-s)H}V\phi(s)ds+i\displaystyle\int_{0}^{t}e^{i(t-s)H}h(s)ds=-
i\displaystyle\int_{0}^{t}e^{i(t-s)H}V\phi(s)ds+u_{2}(t),
\end{align}
Hence
\begin{align}\label{2.23'}
u_{2}(t)=\phi(t)+i\displaystyle\int_{0}^{t}e^{i(t-s)H}V\phi(s)ds=i\displaystyle\int_{0}^{t}e^{i(t-s)H_{0}}h(s)ds+i\displaystyle\int_{0}^{t}e^{i(t-s)H}V\phi(s)ds.
\end{align}
By H\"{o}lder inequality and \eqref{2.5}, we have
\begin{align}\label{2.24'}
\Vert\langle x\rangle^{-\sigma} \phi\Vert_{L_{I}^{2}L_{x}^{2}}=&\Big\Vert\langle x\rangle^{-\sigma} \displaystyle\int_{0}^{t}e^{i(t-s)H_{0}}h(s)ds\Big\Vert_{L_{I}^{2}L_{x}^{2}}\nonumber\\
\lesssim & \big\|\langle x\rangle^{-\sigma}\big\|_{L_{x}^{\frac{N}{2+s}}}\Big\|\displaystyle\int_{0}^{t}e^{i(t-s)H_{0}}h(s)ds\Big\|_{L_{I}^{2}L_{x}^{\frac{2N}{N-4-2s}}}\nonumber\\
\lesssim & \Vert h\Vert_{S'(\dot{H}^{-s}, I)}.
\end{align}
The other term can be estimated by the Jensen-Kato type decay estimates \eqref{2.3'} and Young inequality:
\begin{align}\label{2.25'}
\Big\|\langle x\rangle^{-\sigma}\displaystyle\int_{0}^{t}e^{i(t-s)H}V\phi(s)ds\Big\|_{L_{I}^{2}L_{x}^{2}}
\lesssim &\Big\|\displaystyle\int_{0}^{t}\|\langle x\rangle^{-\sigma}e^{i(t-s)H}V\phi(s)\|_{L_{x}^{2}}ds\Big\|_{L_{I}^{2}}\nonumber\\
\lesssim &\Big\|\displaystyle\int_{0}^{t}\langle t-s\rangle^{-\frac{N}{4}}\|\langle x\rangle^{\sigma} V\phi(s)\|_{L_{x}^{2}}ds\Big\|_{L_{I}^{2}}\nonumber\\
\lesssim &\|\langle x\rangle^{\sigma} V\phi(s)\|_{L_{I}^{2}L_{x}^{2}}\lesssim \|\langle x\rangle^{2\sigma} V\|_{L_{x}^{\infty}}\|\langle x\rangle^{-\sigma}\phi\|_{L_{I}^{2}L_{x}^{2}}\nonumber\\
\lesssim & \Vert h\Vert_{S'(\dot{H}^{-s}, I)}.
\end{align}
Putting \eqref{2.24'} and \eqref{2.25'} together gives \eqref{2.20'}.
Collecting \eqref{2.11'}, \eqref{2.12'} and \eqref{2.18'} completes the proof of \eqref{2.7}.
\end{proof}

\begin{remark}\label{rem2.2}
Note that the constant $C $ in the estimate \eqref{2.7} is
dependent of the scaling $V(x)\mapsto V_{r}=\frac{1}{r^{4}}V(\frac{x}{r})$, which is fundamental to the
linear profile decomposition Proposition \ref{pro5.3}. Indeed, Let $C=C(V)>0$ be the sharp constant for \eqref{2.7}.
Then for any $(q, r)\in \Lambda_{s}$ and $(\tilde{q}, \tilde{r})\in \Lambda_{-s}$, we have
\begin{align}\label{2.25''}
\Big\|& e^{it(\Delta^2+V_{r})}f\Big(\frac{x}{r}\Big)\Big\|_{L_{t}^{q}L_{x}^{r}}+
\Big\|\displaystyle\int_{0}^{t}e^{i(t-s)(\Delta^2+V_{r})}h\Big(\frac{s}{r^{4}},\frac{x}{r}\Big)ds\Big\|_{L_{t}^{q}L_{x}^{r}}\nonumber\\
=&\Big\|e^{i\frac{t}{r^{4}}(\Delta^2+V)}f\Big(\frac{x}{r}\Big)\Big\|_{L_{t}^{q}L_{x}^{r}}
+\Big\|\displaystyle\int_{0}^{t}e^{i\frac{t-s}{r^{4}}(\Delta^2+V_{r})}h\Big(\frac{s}{r^{4}},\frac{x}{r}\Big)ds\Big\|_{L_{t}^{q}L_{x}^{r}}\nonumber\\
=& r^{\frac{4}{q}+\frac{N}{r}}\Big\|e^{itH}f\Big\|_{L_{t}^{q}L_{x}^{r}}+r^{\frac{4}{q}+\frac{N}{r}+4}\Big\|\displaystyle\int_{0}^{t}e^{i(t-s)H}h(s)ds\Big\|_{L_{t}^{q}L_{x}^{r}}\nonumber\\
\leq & C(V)\Big(\ r^{\frac{4}{q}+\frac{N}{r}}\|f\|_{\dot{H}^{s}}+r^{\frac{4}{q}+\frac{N}{r}+4}\|h\|_{L_{t}^{\tilde{q}'}L_{x}^{\tilde{r}'}}\Big)\nonumber\\
=& C(V)\Big(\ r^{\frac{4}{q}+\frac{N}{r}-\frac{N}{2}+s}\Big\|f\Big(\frac{x}{r}\Big)\Big\|_{\dot{H}^{s}}
+r^{\frac{4}{q}+\frac{N}{r}+4-\frac{4}{\tilde{q}'}-\frac{N}{\tilde{r}'}}\Big\|h\Big(\frac{t}{r^{4}},\frac{x}{r}\Big)\Big\|_{L_{t}^{\tilde{q}'}L_{x}^{\tilde{r}'}}\Big)\nonumber\\
=&C(V)\Big(\ \Big\|f\Big(\frac{x}{r}\Big)\Big\|_{\dot{H}^{s}}+\Big\|h\Big(\frac{t}{r^{4}},\frac{x}{r}\Big)\Big\|_{L_{t}^{\tilde{q}'}L_{x}^{\tilde{r}'}}\Big).
\end{align}
As $f$ are arbitrarily chosen, we have $C(V_{r})\le C(V)$ for all $r>0$. On the other hand, since $V(x)=r^4V_r(rx)$ for any $r>0$, by symmetry we can conclude that
$C(V_{r})= C(V)$ for all $r>0$.
\end{remark}

\begin{proposition}\label{lem2.3}
Let $N>6$ and $V$ satisfy the conditions $(\mathfrak{C}_1)$-$(\mathfrak{C}_2)$.  Then for any given B-admissible pair $(q, r)$,
the solution $u$ to \eqref{2.6} satisfies the inequality
\begin{align}\label{2.11}
\Vert\Delta u\Vert_{L^q(I,L^r)}\leq C (\Vert u_0\Vert_{H^2}+
\Vert \langle\nabla\rangle h\Vert_{L^{2}(I,L^{\frac{2N}{N+2}})}).
\end{align}
\end{proposition}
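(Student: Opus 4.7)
The plan is to mirror the Duhamel-intertwining strategy used in Proposition~\ref{lem2.1}. Decompose $u=u_1+u_2$ with $u_1(t)=e^{itH}u_0$ and $u_2(t)=i\int_0^t e^{i(t-s)H}h(s)\,ds$, and re-express each piece as a free Schr\"odinger evolution as in \eqref{2.9'}--\eqref{2.10'}, i.e., $u_j$ satisfies \eqref{2.1} with the forcing replaced by $Vu_j$ (plus $h$ in the $u_2$ case). Applying the sharp free estimate \eqref{2.4} at the $\Delta$-level to each representation yields, for any B-admissible $(q,r)$,
$$
\|\Delta u\|_{L^q(I,L^r)} \le C\Big(\|\Delta u_0\|_{L^2}+\|\nabla(Vu)\|_{L^2(I,L^{2N/(N+2)})}+\|\nabla h\|_{L^2(I,L^{2N/(N+2)})}\Big),
$$
so matters reduce to controlling the middle term.

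Next, I would expand $\nabla(Vu)=(\nabla V)u+V\nabla u$ and apply H\"older's inequality with a weight $\sigma\in(N/2+2,\beta/2)$, which is nonempty because $\beta>N+4$:
$$
\|\nabla(Vu)\|_{L^{2N/(N+2)}}\le \|\langle x\rangle^\sigma\nabla V\|_{L^N}\|\langle x\rangle^{-\sigma}u\|_{L^2}+\|\langle x\rangle^\sigma V\|_{L^N}\|\langle x\rangle^{-\sigma}\nabla u\|_{L^2},
$$
with the weighted $L^N$-norms of $V,\nabla V$ finite by the pointwise decay $|V|+|\nabla V|\lesssim\langle x\rangle^{-\beta}$. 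The bound $\|\langle x\rangle^{-\sigma}u\|_{L^2_tL^2_x}\lesssim\|u_0\|_{L^2}+\|\langle\nabla\rangle h\|_{L^2(I,L^{2N/(N+2)})}$ then follows from the same intertwining-plus-Jensen--Kato argument that proves \eqref{2.12'} and \eqref{2.20'} in Proposition~\ref{lem2.1}, combined with the free Strichartz and the local-decay estimate \eqref{2.4'}.

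The principal obstacle is the corresponding bound for $\|\langle x\rangle^{-\sigma}\nabla u\|_{L^2_tL^2_x}$, since $\nabla$ does not commute with $H$. The key observation is that $[\nabla,H]=\nabla V$, so $\nabla u$ itself solves the equation \eqref{2.6} with initial datum $\nabla u_0$ and source $(\nabla V)u+\nabla h$. Writing its $H$-Duhamel formula
$$
\nabla u(t)=e^{itH}\nabla u_0-i\int_0^t e^{i(t-s)H}\bigl((\nabla V)u(s)+\nabla h(s)\bigr)\,ds,
$$
the homogeneous piece is handled by the local-decay estimate \eqref{2.4'}; the commutator piece $(\nabla V)u$ is controlled by Jensen--Kato \eqref{2.3'} combined with Young's inequality (valid because $\langle t\rangle^{-N/4}\in L^1$ when $N>4$), which reduces it to the already-handled $\|\langle x\rangle^{-\sigma}u\|_{L^2_tL^2_x}$; and the $\nabla h$ piece is treated by yet another intertwining: letting $\psi$ denote the free Duhamel solution with zero initial data and forcing $h$, so that $\tilde\phi:=\nabla\psi=i\int_0^t e^{i(t-s)\Delta^2}\nabla h\,ds$, the identity
$$
i\int_0^t e^{i(t-s)H}\nabla h\,ds=\tilde\phi+i\int_0^t e^{i(t-s)H}V\tilde\phi\,ds
$$
together with H\"older in $L^{N/2}$ and the free smoothing estimate \eqref{smooth} applied to $\nabla\psi$ on the B-admissible pair $(2,\frac{2N}{N-4})$ gives $\|\langle x\rangle^{-\sigma}\tilde\phi\|_{L^2_tL^2_x}\lesssim\|h\|_{L^2(I,L^{2N/(N+2)})}$, and the residual $V\tilde\phi$ convolution is absorbed via Jensen--Kato once more.

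Collecting these yields $\|\langle x\rangle^{-\sigma}\nabla u\|_{L^2_tL^2_x}\lesssim \|u_0\|_{H^1}+\|\langle\nabla\rangle h\|_{L^2(I,L^{2N/(N+2)})}$, and combining with the earlier contributions gives the desired estimate \eqref{2.11}. The assumption $\beta>N+4$ is used precisely so that the choice $\sigma\in(N/2+2,\beta/2)$ renders $\|\langle x\rangle^{2\sigma}V\|_{L^\infty}$, $\|\langle x\rangle^{2\sigma}\nabla V\|_{L^\infty}$ and the weighted $L^N$-norms above all finite, while the dimensional restriction $N>6$ secures all the auxiliary Strichartz, Sobolev and H\"older exponents in admissible ranges throughout the intertwining arguments.
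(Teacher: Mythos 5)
Your proposal is correct, and its skeleton (free Duhamel representation of $u$, the sharp free estimate \eqref{2.4}, reduction to $\nabla(Vu)$, and the splitting $(\nabla V)u+V\nabla u$) coincides with the paper's proof. The two arguments diverge only in how the term $V\nabla u$ is closed. The paper uses the \emph{unweighted} H\"older bound $\|V\nabla u\|_{L^{2N/(N+2)}}\le\|V\|_{L^{N/3}}\|\nabla u\|_{L^{2N/(N-4)}}$ as in \eqref{2.14}, controls $\|\nabla u\|_{L^2_IL^{2N/(N-4)}}$ by the free smoothing estimate \eqref{smooth} applied to the same Duhamel formula (see \eqref{2.15}), and thereby reduces everything to the single weighted quantity $\|\langle x\rangle^{-\sigma}u\|_{L^2_{I,x}}$, which is exactly \eqref{weighted}, i.e.\ \eqref{2.12'} and \eqref{2.20'} with $s=1$. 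You instead put the weight on $V$ and are forced to prove a weighted local-smoothing bound for $\nabla u$ itself; you do this via the commutator identity $[\nabla,H]=\nabla V$, the Duhamel formula for $\nabla u$, the Kato smoothing estimate \eqref{2.4'} for the homogeneous piece, and Jensen--Kato \eqref{2.3'} plus Young for the source terms. This is a genuinely different (and somewhat longer) route for that one step: it costs you an extra intertwining and the commutator computation, but it is self-contained and would generalize to situations where one wants weighted space-time bounds on derivatives of the solution; the paper's route is more economical because it never needs any weighted information about $\nabla u$. Two small points to tidy up: (i) your claimed bound $\|\langle x\rangle^{-\sigma}u\|_{L^2_{t,x}}\lesssim\|u_0\|_{L^2}+\|\langle\nabla\rangle h\|_{L^2(I,L^{2N/(N+2)})}$ should be taken from the $s=1$ case of Proposition \ref{lem2.1} (giving $\|\nabla u_0\|_{L^2}+\|h\|_{L^2(I,L^{2N/(N+2)})}$ on the right), since the $s=0$ dual norm $\|h\|_{L^2L^{2N/(N+4)}}$ is \emph{not} controlled by $\|\langle\nabla\rangle h\|_{L^2L^{2N/(N+2)}}$; this is harmless for \eqref{2.11}, which allows $\|u_0\|_{H^2}$, and it is also where the restriction $N>6$ (i.e.\ $N>4+2s$ with $s=1$) actually enters. (ii) In absorbing the residual $V\tilde\phi$ term you should H\"older as $\|\langle x\rangle^{\sigma}V\tilde\phi\|_{L^2}\le\|\langle x\rangle^{\sigma}V\|_{L^{N/2}}\|\tilde\phi\|_{L^{2N/(N-4)}}$ so as to land back on the norm you have already bounded; with $\sigma\in(N/2+2,\beta/2)$ all the weighted norms of $V$ and $\nabla V$ you invoke are indeed finite.
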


\begin{proof}
 The solution $u$ to \eqref{2.6} can be expressed as
\begin{align}\label{2.12}
u(t,x)=e^{it\Delta^2}u_{0}+i\displaystyle\int_{0}^{t}e^{i(t-s)\Delta^2}Vu(s)ds
+i\displaystyle\int_{0}^{t}e^{i(t-s)\Delta^2}h(s)ds.
\end{align}
Using \eqref{2.4} yields
\begin{align}\label{2.13}
\|\Delta u\|_{L_{I}^{q}L_{x}^{r}}\lesssim \|\Delta u_{0}\|_{L^{2}}
+\|\nabla h\|_{L_{I}^{2}L_{x}^{\frac{2N}{N+2}}}
+\|\nabla (Vu)\|_{L_{I}^{2}L_{x}^{\frac{2N}{N+2}}},
\end{align}
where
\begin{align}\label{2.14}
\|\nabla (Vu)\|_{L_{I}^{2}L_{x}^{\frac{2N}{N+2}}}
&\leq \|V(\nabla u)\|_{L_{I}^{2}L_{x}^{\frac{2N}{N+2}}}+\|(\nabla V)u\|_{L_{I}^{2}L_{x}^{\frac{2N}{N+2}}}
\nonumber\\
&\lesssim \|\langle x\rangle^{\sigma}\nabla V\|_{L_{x}^{N}}\|\langle x\rangle^{-\sigma}u\|_{L_{I,x}^{2}}
+\| V\|_{L_{x}^{\frac{N}{3}}}\|\nabla u\|_{L_{I}^{2}L_{x}^{\frac{2N}{N-4}}}.
\end{align}
Since
\begin{align}\label{2.15}
\|\nabla u\|_{L_{t}^{2}L_{x}^{\frac{2N}{N-4}}}
\lesssim & \|\nabla u_{0}\|_{L_{x}^{2}}
+\|h\|_{L_{I}^{2}L_{x}^{\frac{2N}{N+2}}}+\|Vu\|_{L_{I}^{2}L_{x}^{\frac{2N}{N+2}}}\nonumber\\
\lesssim & \|\nabla u_{0}\|_{L_{x}^{2}}
+\|h\|_{L_{I}^{2}L_{x}^{\frac{2N}{N+2}}}+\|\langle x\rangle^{\sigma} V\|_{L_{x}^{N}}\|\langle x\rangle^{-\sigma}u\|_{L_{I,x}^{2}},
\end{align}
where we have used the estimate \eqref{smooth} in the first inequality.
It follows from \eqref{2.13}-\eqref{2.15} that it suffices to control $\|\langle x\rangle^{-\sigma}u\|_{L_{I,x}^{2}}$.
We note that using \eqref{2.12'} and \eqref{2.20'} with $s=1$ in Proposition \ref{lem2.1} yields that
\begin{align}\label{weighted}
\|\langle x\rangle^{-\sigma}u\|_{L_{I,x}^{2}}\lesssim  \|\nabla u_{0}\|_{L_{x}^{2}}
+\|h\|_{L_{I}^{2}L_{x}^{\frac{2N}{N+2}}}.
\end{align}
Thus putting \eqref{2.13}-\eqref{weighted} together gives \eqref{2.11}.
\end{proof}

By the estimate \eqref{2.3}, when take $s=0$ and $q=2$, we have
$$\big\||\nabla|e^{it\Delta^2}u_0\big\|_{L^2_tL_x^{\frac{2N}{N-2}}}\lesssim \|u_0\|_{L^2_x}, $$
which by a dual argument deduces the following smoothing estimate:
\begin{align}\label{2.16}
\Big\|\Delta\displaystyle\int_{{\bf R}}e^{-is\Delta^2}h(s)ds\Big\|_{L_{x}^{2}}
\lesssim \big\||\nabla|h\big\|_{L_{t}^{2}L_{x}^{\frac{2N}{N+2}}}.
\end{align}
Similarly, to get the scattering result in our paper, we need the following estimate with $H$ in place of $\Delta^2$ inside the integral, which can be also used
to establish the linear scattering ( see Proposition \ref{lem2.7})
as follows.

\begin{proposition}\label{lem2.4}
Let $N>4$, $H_0=\Delta^2$, $H=H_0+V$ and $V$ satisfy the conditions $(\mathfrak{C}_1)$-$(\mathfrak{C}_2)$, then we have the smoothness estimate
\begin{align}\label{2.17}
\Big\|H_{0}^{\frac{1}{2}}\displaystyle\int_{{\bf R}}e^{-isH}h(s)ds\Big\|_{L_{x}^{2}}
\lesssim \big\||\nabla|h\big\|_{L_{t}^{2}L_{x}^{\frac{2N}{N+2}}}.
\end{align}
\end{proposition}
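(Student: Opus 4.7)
The plan is to reduce the claimed smoothing estimate to the free version \eqref{2.16} via a Duhamel expansion of $e^{-isH}$, then control the $V$-mediated correction using the local decay estimate \eqref{2.4'} together with weighted H\"older bounds on $V$ and $\nabla V$, in the spirit of Propositions \ref{lem2.1} and \ref{lem2.3}.

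Starting from the Duhamel identity
\[
e^{-isH}w = e^{-is\Delta^2}w - i\int_0^s e^{-i(s-\tau)\Delta^2} V e^{-i\tau H} w\,d\tau
\]
applied to $w=h(s)$ and integrated over $s$, and using the factorization $e^{-i(s-\tau)\Delta^2} = e^{-is\Delta^2}e^{i\tau\Delta^2}$, I rewrite
\[
\int e^{-isH}h(s)\,ds = \int e^{-is\Delta^2}h(s)\,ds - i\int e^{-is\Delta^2}\tilde h(s)\,ds,
\]
where $\tilde h(s) := \int_0^s e^{i\tau\Delta^2} V e^{-i\tau H} h(s)\,d\tau$. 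Applying \eqref{2.16} to both pieces reduces the problem to establishing $\||\nabla|\tilde h\|_{L^2_s L^{2N/(N+2)}_x} \lesssim \||\nabla|h\|_{L^2 L^{2N/(N+2)}}$.

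Since $|\nabla|$ commutes with $e^{i\tau\Delta^2}$ and $\||\nabla|\cdot\|_{L^p}\sim\|\nabla\cdot\|_{L^p}$ for $1<p<\infty$, the Leibniz rule combined with the commutator identity
\[
\nabla e^{-i\tau H} = e^{-i\tau H}\nabla - i\int_0^\tau e^{-i(\tau-\tau')H}(\nabla V)e^{-i\tau'H}\,d\tau'
\]
(derived from $[\nabla,H]=-\nabla V$ via Duhamel) expresses $\nabla\tilde h(s)$ as a sum of integrals of the form $\int_0^s e^{i\tau\Delta^2}(W\cdot e^{-i\tau H}f)\,d\tau$, possibly nested, with $W\in\{V,\nabla V\}$ and $f\in\{h(s),\nabla h(s)\}$. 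Crucially, this expansion never requires second derivatives of $V$, so the $C^1$-regularity hypothesis suffices. Each such term is then estimated by combining: (i) the weighted H\"older bound $\|W\cdot e^{-i\tau H}f\|_{L^{2N/(N+4)}_x}\lesssim\|\langle x\rangle^\sigma W\|_{L^{N/2}_x}\|\langle x\rangle^{-\sigma}e^{-i\tau H}f\|_{L^2_x}$; (ii) the local decay estimate \eqref{2.4'}, giving $\|\langle x\rangle^{-\sigma}e^{-i\tau H}f\|_{L^2_\tau L^2_x}\lesssim\|f\|_{L^2_x}$ for $\sigma>1/2$; and (iii) the dual biharmonic Strichartz estimate $\|\int e^{i\tau\Delta^2}K\,d\tau\|_{L^2_x}\lesssim\|K\|_{L^2_\tau L^{2N/(N+4)}_x}$ (from the admissible pair $(2,2N/(N-4))$). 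The Sobolev embedding $\dot W^{1,2N/(N+2)}\hookrightarrow L^2$ (valid for $N>2$) lets us bound $\|h\|_{L^2_sL^2_x}$ by $\||\nabla|h\|_{L^2L^{2N/(N+2)}}$, closing the estimate.

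The \textbf{main obstacle} is in this last step: reconciling the desired output norm $L^{2N/(N+2)}_x$ on the Duhamel-type integral $\int_0^s e^{i\tau\Delta^2} K\,d\tau$ (while standard Strichartz dual gives only $L^2_x$), together with controlling the nested $\tau$-integral generated by the commutator $[\nabla,e^{-i\tau H}]$, all while maintaining a single choice of weight $\sigma > 1/2$ small enough that both $\|\langle x\rangle^\sigma V\|_{L^{N/2}}$ and $\|\langle x\rangle^\sigma\nabla V\|_{L^{N}}$ are finite under the hypothesis $\beta>N+4$. This will likely require iterating the duality argument (testing against $\phi\in L^2_sL^{2N/(N-2)}_x$ and using Sobolev on the dual side) rather than a direct bound.
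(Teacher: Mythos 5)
Your proposal has a genuine gap at exactly the point you flag as the ``main obstacle,'' and I do not believe the route you sketch can be closed as stated. The reduction to the free estimate \eqref{2.16} requires proving
$\||\nabla|\tilde h\|_{L_{s}^{2}L_{x}^{2N/(N+2)}}\lesssim\||\nabla|h\|_{L_{t}^{2}L_{x}^{2N/(N+2)}}$ with
$\tilde h(s)=\int_0^s e^{i\tau\Delta^2}Ve^{-i\tau H}h(s)\,d\tau$, i.e.\ a fixed-time bound on a Duhamel integral in the Lebesgue exponent $\frac{2N}{N+2}<2$. All the tools you invoke in (iii) (dual/retarded Strichartz estimates) produce output exponents $\ge 2$; there is no Strichartz-type mechanism that lands a Duhamel integral in an $L^p_x$ space with $p<2$, and ``iterating the duality argument'' is not a proof — after testing against $\phi\in L^2_sL^{2N/(N-2)}_x$ you face $|\nabla|\phi$ in a space where it is not controlled, and the triangular $\tau\le s$ integration region prevents the pairing from factoring. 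A second, independent problem: your commutator expansion of $[\nabla,e^{-i\tau H}]$ produces the term $e^{-i\tau H}\nabla h(s)$, and applying the local decay estimate \eqref{2.4'} to it requires $\nabla h(s)\in L^2_x$; this is \emph{not} implied by $|\nabla|h\in L^2_tL^{2N/(N+2)}_x$ (Sobolev gives $h(s)\in L^2_x$, not $\nabla h(s)\in L^2_x$), so step (ii) fails for that branch of the expansion.

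The paper circumvents both issues by never expanding $e^{-isH}$ around the free flow on the left-hand side. Since $V\ge0$ one has $\|H_0^{1/2}w\|_{L^2}\le\|H^{1/2}w\|_{L^2}$, and $H^{1/2}$ \emph{commutes} with $e^{-isH}$; dualizing against $g\in L^2$ and splitting $H^{1/2}=H^{1/4}\cdot H^{1/4}$ gives
$\lesssim\|H^{1/4}h\|_{L^2_tL^{2N/(N+2)}_x}\sup_{\|g\|_{L^2}\le1}\|e^{isH}H^{1/4}g\|_{L^2_tL^{2N/(N-2)}_x}$. The fractional powers of $H$ are then traded for $|\nabla|$ in $L^r$ via Lemma \ref{acta} ($\|H^{1/4}f\|_{L^r}\lesssim\||\nabla|f\|_{L^r}$, proved by Stein interpolation of imaginary powers using off-diagonal heat-kernel bounds) — an ingredient your argument omits entirely. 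What remains is the homogeneous smoothing bound $\||\nabla|e^{isH}g\|_{L^2_tL^{2N/(N-2)}_x}\lesssim\|g\|_{L^2}$, and only \emph{there} does a Duhamel perturbation appear; crucially, the derivative is absorbed by the one-derivative gain in the free Strichartz estimate \eqref{2.3} (one estimates $\||\nabla|^{-1}(Vu)\|_{L^{2N/(N+2)}}\lesssim\|Vu\|_{L^{2N/(N+4)}}$), so no gradient ever falls on $V$ or on the evolved data and the local decay estimate is only ever applied to $L^2$ inputs. If you want to salvage a perturbative proof, you should restructure it along these lines rather than forcing the correction term into $L^{2N/(N+2)}_x$.
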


To get \eqref{2.17}, we need to show that the fractional power associated with $H$ is bounded by the one associated with $H_{0}$ in $L^r$-norm, because it compensate the non-commutativity between $H_{0}^{\frac{1}{4}}$ and $e^{itH}$.

\begin{lemma}\label{acta}
Let $N>4$, $0\le V\in L^{\frac{N}{4}}({\bf R}^{N})$, $H_0=\Delta^2$ and $H=H_0+V$. Then for $s\in [0, 4]$ and $$\frac{2N}{N+4}<r<\min\Big\{\frac{8N}{4(N-4)-(N-12)s},\frac{2N}{N-4}\Big\},$$
\begin{equation}\label{acta1}
\|H^{\frac{s}{4}}f\|_{L^{r}}\leq C_{r}\ \|H_{0}^{\frac{s}{4}}f\|_{L^{r}}
\sim \||\nabla|^{s}f\|_{L^{r}}.
\end{equation}
In particular, taking $s=1$ and $ \frac{2N}{N+2}\le r\le \frac{2N}{N-2}$, we have
\begin{equation}\label{acta1}
\|H^{\frac{1}{4}}f\|_{L^{r}}\leq C_N\ \|H_{0}^{\frac{1}{4}}f\|_{L^{r}}
\sim \||\nabla|f\|_{L^{r}}.
\end{equation}
\end{lemma}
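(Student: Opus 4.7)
The right-hand equivalence $\|H_0^{s/4}f\|_{L^r}\sim\||\nabla|^sf\|_{L^r}$ is immediate from the identification of $H_0^{s/4}=(-\Delta)^{s/2}$ as the Fourier multiplier $|\xi|^s$. For the main inequality $\|H^{s/4}f\|_{L^r}\le C_r\|H_0^{s/4}f\|_{L^r}$, the plan is to apply Stein's complex interpolation theorem to the analytic family
\[
T_z := H^{z}\,H_0^{-z},\qquad 0\le\mathrm{Re}(z)\le 1,
\]
so that the real slice $\mathrm{Re}(z)=s/4$, $s\in[0,4]$, delivers exactly \eqref{acta1}.

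For the two endpoints I would proceed as follows. At $\mathrm{Re}(z)=0$, I would write $T_{it}=H^{it}H_0^{-it}$; since $H_0=\Delta^2$ has Fourier symbol $|\xi|^4$, Mikhlin--H\"ormander yields $\|H_0^{it}\|_{L^r\to L^r}\lesssim (1+|t|)^{N/2}$ for every $1<r<\infty$, while the imaginary powers of the nonnegative self-adjoint operator $H=\Delta^2+V$ obey a polynomial bound of the same type via a spectral-multiplier theorem applied to $e^{-tH}$ (whose $L^p\to L^q$ bounds can be compared with those of $e^{-tH_0}$ thanks to $V\ge 0$ and $V\in L^{N/4}$, using a Duhamel expansion). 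At $\mathrm{Re}(z)=1$, I would decompose
\[
T_{1+it}=H^{it}\,(HH_0^{-1})\,H_0^{-it}=H^{it}\bigl(I+VH_0^{-1}\bigr)H_0^{-it},
\]
and control the middle factor by H\"older's inequality together with the four-derivative Sobolev embedding:
\[
\|VH_0^{-1}g\|_{L^r}\le\|V\|_{L^{N/4}}\,\|H_0^{-1}g\|_{L^{r^\ast}}\lesssim\|V\|_{L^{N/4}}\,\|g\|_{L^r},\qquad \tfrac{1}{r^\ast}=\tfrac{1}{r}-\tfrac{4}{N},
\]
valid under the stated restrictions on $r$. Stein's three-lines theorem then interpolates to yield $\|T_{s/4}\|_{L^r\to L^r}\lesssim 1$ for $s\in[0,4]$, which is precisely \eqref{acta1}.

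The precise two-sided restriction $\frac{2N}{N+4}<r<\min\{\frac{8N}{4(N-4)-(N-12)s},\frac{2N}{N-4}\}$ arises by matching the Sobolev embedding conditions at the two endpoints along the Riesz--Thorin line (at $s=4$ one needs $r<N/4$ for the four-derivative embedding, at $s=0$ one needs $\frac{2N}{N+4}<r<\frac{2N}{N-4}$ for $H_0^{-1}$ to land in the correct dual space); the particular case $s=1$ with $r\in[\frac{2N}{N+2},\frac{2N}{N-2}]$ is then a mere specialization. The main obstacle I expect is the polynomial imaginary-power bound $\|H^{it}\|_{L^r\to L^r}\lesssim(1+|t|)^{M}$ for the perturbed biharmonic operator $H$, which is genuinely delicate in $L^r$ for $r\ne 2$; under the present hypotheses, however, it can be extracted either from the Jensen--Kato and local-decay estimates of Lemma~\ref{lemFSY} via an abstract spectral multiplier principle, or from Gaussian-type heat-kernel bounds for $e^{-tH}$ obtained by comparison with $e^{-tH_0}$ through a Trotter--Duhamel expansion in $V$.
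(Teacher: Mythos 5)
Your proposal follows essentially the same route as the paper: the same analytic family $T_z=H^zH_0^{-z}$ on the strip $0\le\mathrm{Re}(z)\le1$, the same two endpoints, the same H\"older-plus-Sobolev control of the factor $VH_0^{-1}$ at $\mathrm{Re}(z)=1$, and Stein interpolation to reach $\theta=s/4$. The one place where your sketch does not close is the $L^r$-boundedness of the imaginary powers $H^{iy}$ with polynomial growth in $y$, which you correctly single out as the delicate point but for which both of your proposed routes are problematic: the Gaussian heat-kernel bound for $e^{-tH}$ via a Trotter--Duhamel comparison with $e^{-tH_0}$ is exactly what the paper's Remark \ref{Gaussian} says is \emph{not} available (since $e^{-t\Delta^2}$ is not positivity preserving, Trotter-type arguments fail), and the Jensen--Kato and local-decay estimates of Lemma \ref{lemFSY} are weighted $L^2$ statements that do not by themselves produce $L^r$ spectral-multiplier bounds. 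What the paper actually uses is the $(p,q)$ off-diagonal estimate for the semigroup $e^{-tH}$ from Deng--Ding--Yao, valid only for $\frac{2N}{N+4}\le p\le q\le\frac{2N}{N-4}$, fed into Blunck's H\"ormander-type spectral multiplier theorem; it is this restricted off-diagonal range --- not a Sobolev mapping condition on $H_0^{-1}$, as you suggest --- that forces the constraint $\frac{2N}{N+4}<r<\frac{2N}{N-4}$ at the $\mathrm{Re}(z)=0$ endpoint. With that ingredient supplied, the remainder of your argument, including the computation of the interpolated upper endpoint $\frac{8N}{4(N-4)-(N-12)s}$ from the value $N/4$ at $\theta=1$, matches the paper's proof.
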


\begin{proof} By Fourier transform we can define the power of $H_0$ as follows
$$\widehat{H_0^{z}f}(\xi)=|\xi|^{4z}\widehat{f}(\xi),\  z\in \mathbf{C}$$
where $\widehat{f}$ denotes the Fourier transform of $f$.  When $z=iy$, it was well-known from Mihilin's multiplier theorem that
the imaginary power operator $H_{0}^{-iy}$ are  bounded on
$L^{q}$ for all $1<q<\infty$.

For the nonnegative self-adjoint operator $H=\Delta^2+V$, $H^{z}$ can be defined by the functional calculus
\begin{equation}\label{acta3}
H^{z}=\displaystyle\int_{0}^{\infty}\lambda^{z}dE_{H}(\lambda).
\end{equation}
Since the fourth order Schr\"odinger semigroup $e^{-tH}$ satisfies the following $(p,q)$ off-diagonal estimates ( See \cite[Section 2]{DDY} ):
\begin{eqnarray}\label{gges}
\big\|\chi_{B(x,t^{1/4})}e^{-tH}\chi_{B(y,t^{1/4})}\big\|_{L^{p}\rightarrow L^{q}}\leq Ct^{\frac{N}{4}(\frac{1}{q}-\frac{1}{p})}e^{-c\frac{|x-y|^{4/3}}{t^{1/3}}},
\end{eqnarray}
for $\frac{2N}{N+4}\le p\le q\le \frac{2N}{N-4}$ and $N>4$, where $B(x,t^{1/4})$ is the ball centered at $x$ with radius $t^{1/4}$ and $\chi_{B(x,t^{1/4})}$ is the characteristic function of $B(x,t^{1/4})$, it follows from \cite[Theorem 1.2]{Blu} that the imaginary power operator $H^{-iy}$ are  bounded on
$L^{q}$ for all $\frac{2N}{N+4}<q<\frac{2N}{N-4}$.

Define a family of operators $T_z$ as follows:
\begin{equation}\label{acta2}
T_{z}=H^{z}H_{0}^{-z},\quad z=x+iy\in \mathbf{C},\  0\le x\le 1.
\end{equation}
Obviously, for $z=iy$ and all $\frac{2N}{N+4}<q<\frac{2N}{N-4}$,
\begin{equation}\label{acta4}
\|T_{iy}\|_{L^{q}\rightarrow L^{q}}\lesssim (1+|y|)^{2\alpha}, \ \alpha\ge[N/2]+1.
\end{equation}
Now turn to $z=1+iy$. For $1<p<\frac{N}{4}$, by using H\"{o}lder inequality and
Sobolev embedding theorem we get that
\begin{equation}\label{acta5}
  \begin{split}
      \|Hf\|_{L^{p}}\leq&\,\|H_{0}f\|_{L^{p}}+\|Vf\|_{L^{p}}\\
                    \leq&\, \|H_{0}f\|_{L^{p}}+\|V\|_{L^{\frac{N}{4}}}\|H_{0}f\|_{L^{p}}\\
                    \lesssim&\, \|H_{0}f\|_{L^{p}},
  \end{split}
\end{equation}
which implies that for all  $\frac{2N}{N+4}<q<\min\{\frac{2N}{N-4},\ \frac{N}{4} \}$
\begin{equation}\label{acta6}
\|T_{1+iy}\|_{L^{q}\rightarrow L^{q}}=\|H^{iy}T_{1}H_0^{-iy}\|_{L^{q}\rightarrow L^{q}}\lesssim (1+|y|)^{2\alpha}, \ \alpha\ge[N/2]+1.
\end{equation}
By collecting \eqref{acta4} and \eqref{acta6}, it follows from Stein-Weiss interpolation that
real number $\theta\in [0, 1]$,
\begin{equation}\label{acta7}
\|T_{\theta}\|_{L^{r}\rightarrow L^{r}}\lesssim 1,
\end{equation}
for all $$\frac{2N}{N+4}<r<\min\Big\{\frac{2N}{(N-4)-(N-12)\theta},\frac{2N}{N-4}\Big\}.$$
Let $s=4\theta$, then for $s\in [0, 4]$, we obtain the following desired estimates
\begin{equation}\label{acta8}
\|H^{\frac{s}{4}}f\|_{L^{r}}\leq C_{r}\|H_{0}^{\frac{s}{4}}f\|_{L^{r}}
\sim \||\nabla|^{s}f\|_{L^{r}}.
\end{equation}
In particular, when $s=1$  and $ \frac{2N}{N+2}\le r\le \frac{2N}{N-2}$,  we have
$$\|H^{\frac{1}{4}}f\|_{L^{r}}\leq C_N\ \|H_{0}^{\frac{1}{4}}f\|_{L^{r}}
\sim \||\nabla|f\|_{L^{r}}.$$
\end{proof}

\begin{remark}\label{Gaussian}
In the $(p.q)$-off diagonal estimate \eqref{gges} of $e^{-t(\Delta^2+V)}$, if $(p,q)=(1,\infty)$, then the estimate  \eqref{gges} is equivalent to the following Gaussian kernel estimate:
 \begin{eqnarray}\label{g-kernel}
|e^{-tH}(x,y)|\leq Ct^{-\frac{N}{4}}e^{-c\frac{|x-y|^{4/3}}{t^{1/3}}},\ \ t>0
\end{eqnarray}
for some constants $C, c>0$. It was well-known that the semigroup $e^{-t\Delta^2}$ satisfies the point-wise estimate \eqref{g-kernel}. So it would be desirable to obtain the Gaussian kernel estimate \eqref{g-kernel} for $e^{-t(\Delta^2+V)}$ in the case $V\ge 0$, which actually implies the $(p.q)$-off diagonal estimate \eqref{gges} for all $1\le p\le q\le\infty$.   However, we remark that the higher order semigroup $e^{-t\Delta^2}$ is not positivity-preserving
and also not a contractive one on $L^{p}(\mathbf{R}^{N})$
$(p\neq2)$ ( see e.g.\cite{L-M} ), which becomes
 difficult to use famous Trotter formula to establish the pointwise the estimate \eqref{g-kernel} for $e^{-t(\Delta^2+V)}$. As for more studies about the higher order kernel estimates, one can see \cite{D-D-Y4} and  references therein.
\end{remark}

\begin{proof}[Proof of Proposition \ref{lem2.4}]
 By Remark \ref{rem1.3}, H\"{o}lder inequality and \eqref{acta1}, we have
\begin{align}\label{2.23}
&\Big\|H_{0}^{\frac{1}{2}}\displaystyle\int_{{\bf R}}e^{-isH}h(s)ds\Big\|_{L_{x}^{2}}\leq\Big\|H^{\frac{1}{2}}\displaystyle\int_{{\bf R}}e^{-isH}h(s)ds\Big\|_{L_{x}^{2}}\nonumber\\
&= \sup_{\|g\|_{L_{x}^{2}}\leq 1}\Big\langle\displaystyle\int_{{\bf R}}e^{-isH}H^{\frac{1}{2}}h(s)ds, g(x)\Big\rangle\nonumber\\
&= \sup_{\|g\|_{L_{x}^{2}}\leq 1}\displaystyle\int_{{\bf R}}\Big\langle H^{\frac{1}{4}}h(s), e^{isH}H^{\frac{1}{4}}g\Big\rangle ds\nonumber\\
&\lesssim \|H^{\frac{1}{4}}h\|_{L_{t}^{2}L_{x}^{\frac{2N}{N+2}}}
\sup_{\|g\|_{L_{x}^{2}}\leq 1}\|e^{isH}H^{\frac{1}{4}}g\|_{L_{t}^{2}L_{x}^{\frac{2N}{N-2}}}\nonumber\\
&\lesssim \big\||\nabla|h\big\|_{L_{t}^{2}L_{x}^{\frac{2N}{N+2}}}
\sup_{\|g\|_{L_{x}^{2}}\leq 1}\big\||\nabla|e^{isH}g\big\|_{L_{t}^{2}L_{x}^{\frac{2N}{N-2}}}.
\end{align}
Now it suffices to prove that
\begin{align}\label{2.24}
\big\||\nabla|e^{isH}g\big\|_{L_{t}^{2}L_{x}^{\frac{2N}{N-2}}}\lesssim \|g\|_{L_{x}^{2}}.
\end{align}
Indeed, let $u(t,x)=e^{itH}g$, then it can be  expressed as
$$
u(t)=e^{it\Delta^2}g+i\displaystyle\int_{0}^{t}e^{i(t-s)\Delta^2}(Vu(s))ds,
$$
 and using \eqref{2.3} with $s=1$ and $(q, r)=(2, \frac{2N}{N-2})$, Sobolev embedding,
H\"{o}lder inequality and Sobolev embedding again leads to
\begin{align}\label{2.25}
\big\||\nabla|u\big\|_{L_{t}^{2}L_{x}^{\frac{2N}{N-2}}}
&\lesssim \big\||\nabla|e^{isH_{0}}g\big\|_{L_{t}^{2}L_{x}^{\frac{2N}{N-2}}}
+\Big\||\nabla|\displaystyle\int_{0}^{t} e^{i(t-s)H_{0}}Vu(s)ds\Big\|_{L_{t}^{2}L_{x}^{\frac{2N}{N-2}}}\nonumber\\
&\lesssim  \big\|g\big\|_{L_{x}^{2}}+\big\||\nabla|^{-1}(Vu)\big\|_{L_{t}^{2}L_{x}^{\frac{2N}{N+2}}}\nonumber\\
&\lesssim  \|g\|_{L_{x}^{2}}+\|Vu\|_{L_{t}^{2}L_{x}^{\frac{2N}{N+4}}}\nonumber\\
&\lesssim  \|g\|_{L_{x}^{2}}
+\|\langle x\rangle^{\sigma}V\|_{L_{x}^{\frac{N}{2}}}\|\langle x\rangle^{-\sigma}u\|_{L_{t}^{2}L_{x}^{2}}\nonumber\\
&\lesssim  \|g\|_{L_{x}^{2}},
\end{align}
where in the last step  we have used local decay estimate \eqref{2.4'}.
\end{proof}


\begin{remark}\label{remark scaling}
Just as what we said in Remark \ref{rem2.2}, the constant $c$ for the estimate \eqref{2.17} is
dependent of the scaling $V(x)\mapsto V_{r}=\frac{1}{r^{4}}V(\frac{x}{r})$, which is also another important fact for the
linear profile decomposition Proposition \ref{pro5.3}. Indeed, Let $c=c(V)>0$ be the sharp constant for \eqref{2.17}.
Then we have
\begin{align}\label{scaling}
\Big\|\Delta \displaystyle\int_{{\bf R}}e^{-is(H_{0}+V_{r})}h\Big(\frac{s}{r^{4}},\frac{x}{r}\Big)\Big\|_{L_{x}^{2}}
=&\Big\|\Delta \Big(\displaystyle\int_{{\bf R}}e^{-i\frac{s}{r^{4}}(H_{0}+V)}h\Big(\frac{s}{r^{4}},\frac{x}{r}\Big)ds\Big)\Big\|_{L_{x}^{2}}
= r^{2+\frac{N}{2}}\Big\|\Delta \displaystyle\int_{{\bf R}}e^{-isH}h(s)ds\Big\|_{L_{x}^{2}}\nonumber\\
\leq c(V)r^{2+\frac{N}{2}}\|\nabla h\|_{L_{t}^{2}L_{x}^{\frac{2N}{N+2}}}
=& c(V)\Big\|\nabla \Big(h\Big(\frac{t}{r^{4}}, \frac{x}{r}\Big)\Big)\Big\|_{L_{t}^{2}L_{x}^{\frac{2N}{N+2}}}.
\end{align}
As $r$ and $h$ are arbitrarily chosen, we find that $c(V_{r})=c(V)$ for all $r>0$.
\end{remark}

As a simple application of Proposition \ref{lem2.4}, we shall establish the following linear scattering.

\begin{proposition}\label{lem2.7}
Let $V$ satisfy the conditions $(\mathfrak{C}_1)$-$(\mathfrak{C}_2)$ and $N>8$. Then

 (i) For any given $\phi\in L^{2}({\bf R}^{N})$, there exist $\phi^{\pm}$ such that
\begin{align}\label{2.26}
\lim_{t\rightarrow\pm\infty}\|e^{itH_{0}}\phi-e^{itH}\phi^{\pm}\|_{L^2({\bf R}^{N})}=0.
\end{align}

(ii) For any given $\phi\in H^{2}({\bf R}^{N})$, there exist $\phi^{\pm}$ such that
\begin{align}\label{2.27}
\lim_{t\rightarrow\pm\infty}\|e^{itH_{0}}\phi-e^{itH}\phi^{\pm}\|_{H^2({\bf R}^{N})}=0.
\end{align}
\end{proposition}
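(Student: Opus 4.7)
I would follow the standard Cook/Jauch wave-operator strategy. Set $\psi(t):=e^{-itH}e^{itH_0}\phi$. Since $e^{-itH}$ is an isometry of $L^2$ and (by Lemma \ref{acta} at $s=2$, $r=2$, together with $V\ge0$, which gives $\|H^{1/2}\cdot\|_{L^2}\sim\|H_0^{1/2}\cdot\|_{L^2}$) an isomorphism of $H^2$, the conclusions \eqref{2.26}--\eqref{2.27} reduce to showing that $\psi(t)$ has a strong limit $\phi^{\pm}$ in $L^2$ (resp.\ $H^2$) as $t\to\pm\infty$. Differentiating yields
\begin{equation*}
\psi(t_2)-\psi(t_1)=-i\int_{t_1}^{t_2}e^{-isH}\,V\,e^{isH_0}\phi\,ds,
\end{equation*}
so everything reduces to a Cauchy/tail estimate on this oscillatory integral in the appropriate norm.

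\textbf{Part (i).} For the $L^2$-estimate I would apply the dual inhomogeneous Strichartz bound from Proposition \ref{lem2.1} at $s=0$ on the B-admissible pair $(2,\tfrac{2N}{N-4})$ (admissible since $N>8$):
\begin{equation*}
\Big\|\int_{t_1}^{t_2}e^{-isH}Ve^{isH_0}\phi\,ds\Big\|_{L^2_x}\lesssim\|Ve^{isH_0}\phi\|_{L^2([t_1,t_2];L^{\frac{2N}{N+4}}_x)}.
\end{equation*}
H\"older gives $\|Ve^{isH_0}\phi\|_{L^{\frac{2N}{N+4}}_x}\le\|V\|_{L^{N/4}}\|e^{isH_0}\phi\|_{L^{\frac{2N}{N-4}}_x}$, and the free B-Strichartz bound \eqref{2.2} on $(2,\tfrac{2N}{N-4})$ gives $\|e^{isH_0}\phi\|_{L^2_sL^{\frac{2N}{N-4}}_x}\lesssim\|\phi\|_{L^2}$. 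The decay assumption $|V(x)|\lesssim\langle x\rangle^{-\beta}$ with $\beta>N+4$ puts $V\in L^{N/4}$, so the whole integrand is globally $L^2_sL^{\frac{2N}{N+4}}_x$-integrable and its $L^2([t_1,t_2])$-tail vanishes by dominated convergence.

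\textbf{Part (ii).} It remains to control $\|H_0^{1/2}(\psi(t_2)-\psi(t_1))\|_{L^2}$, for which the smoothing estimate of Proposition \ref{lem2.4} is tailor-made:
\begin{equation*}
\Big\|H_0^{1/2}\int_{t_1}^{t_2}e^{-isH}Ve^{isH_0}\phi\,ds\Big\|_{L^2_x}\lesssim\|\,|\nabla|(Ve^{isH_0}\phi)\,\|_{L^2([t_1,t_2];L^{\frac{2N}{N+2}}_x)}.
\end{equation*}
Writing $|\nabla|(Vf)=(\nabla V)f+V\nabla f$ and applying H\"older with $V,\nabla V\in L^{N/3}$ (immediate from the $\beta$-decay of both $V$ and $\nabla V$), the $L^{\frac{2N}{N+2}}_x$-norm of the integrand is bounded by $\|\nabla V\|_{L^{N/3}}\|e^{isH_0}\phi\|_{L^{\frac{2N}{N-4}}_x}+\|V\|_{L^{N/3}}\|e^{isH_0}\nabla\phi\|_{L^{\frac{2N}{N-4}}_x}$, whose $L^2_s$-norm is controlled by \eqref{2.2} applied once to $\phi\in L^2$ and once to $\nabla\phi\in L^2$, yielding a total bound of size $\lesssim\|\phi\|_{H^2}$. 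Dominated convergence again drives the tail to zero, so $\psi(t)$ is Cauchy in $H^2$.

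\textbf{Main obstacle.} The only nontrivial point is the $H^2$-statement: since $\Delta$ does not commute with $e^{-isH}$, one cannot simply pull two derivatives through the Duhamel integral. The $H_0^{1/2}$-smoothing estimate of Proposition \ref{lem2.4} is precisely what bypasses this obstruction, since it only requires a single $|\nabla|$ on the source $Ve^{isH_0}\phi$, which is cheap: the derivative is absorbed either by $\nabla V$ (using the $L^{N/3}$-decay) or by $\nabla\phi\in L^2$. Without this one-derivative smoothing one would be forced to put a full $\Delta$ on $e^{isH_0}\phi$, landing outside the Strichartz-admissible range for $\phi\in H^2$.
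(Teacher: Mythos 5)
Your argument is correct and follows essentially the same route as the paper: Cook's method via the Duhamel identity $e^{itH_0}\phi=e^{itH}\phi-i\int_0^t e^{i(t-s)H}Ve^{isH_0}\phi\,ds$, reducing everything to a tail estimate on $\int_{t_1}^{t_2}e^{-isH}Ve^{isH_0}\phi\,ds$, handled by the dual Strichartz bound of Proposition \ref{lem2.1} for the $L^2$ statement and by the $H_0^{1/2}$-smoothing estimate of Proposition \ref{lem2.4} for the $\dot H^2$ statement. The only (harmless) deviation is the H\"older split in part (ii): you use $\|\nabla V\|_{L^{N/3}}\|e^{isH_0}\phi\|_{L^{2N/(N-4)}}+\|V\|_{L^{N/3}}\|\nabla e^{isH_0}\phi\|_{L^{2N/(N-4)}}$, while the paper pairs $\|V\|_{L^{N/4}}$ and $\||x||\nabla V|\|_{L^{N/4}}$ with $L^{2N/(N-6)}$ and invokes the Stein--Weiss/Hardy inequality (which is where its restriction $N>8$ enters); your split is valid under the stated decay of $V$ and $\nabla V$ and in fact bypasses Hardy's inequality.
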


\begin{proof}
 (i) Let $u(t)=e^{itH_{0}}\phi$, then it can be also expressed as
\begin{align}\label{2.28}
u(t)=e^{itH}\phi-i\displaystyle\int_{0}^{t}e^{i(t-s)H}(Vu(s))ds.
\end{align}
Applying Strichartz estimates \eqref{2.7} with $s=0$ yields that for any $t_{1}, t_{2}\in {\bf R}$,
\begin{align}\label{2.29}
&\| e^{-it_{1}H}e^{it_{1}H_{0}}\phi-e^{-it_{2}H}e^{it_{2}H_{0}}\phi\|_{L^{2}}
=\|e^{-it_{1}H}u(t_{1})-e^{-it_{2}H}u(t_{2})\|_{L^{2}}\nonumber\\
&=\Big\|\displaystyle\int_{t_1}^{t_2}e^{-isH}(Vu(s))ds\Big\|_{L^{2}}
\lesssim  \|Vu(t)\|_{L_{[t_{1}, t_{2}]}^{2}L_{x}^{\frac{2N}{N+4}}}
\lesssim \|V\|_{L_{x}^{\frac{N}{4}}}\|u\|_{L_{[t_{1}, t_{2}]}^{2}L_{x}^{\frac{2N}{N-4}}}.
\end{align}
Using Strichartz estimates \eqref{2.2}, it is easy to find that
\begin{align}\label{2.30}
\|u\|_{L_{[t_{1}, t_{2}]}^{2}L_{x}^{\frac{2N}{N-4}}}\rightarrow 0
\end{align}
as $t_{1}, t_{2}\rightarrow \pm\infty$, which implies that the limit of $e^{-itH}e^{itH_{0}}\phi$ exists in $L^{2}$ as
$t$ tends to $\pm\infty$, and it is namely $\phi^{\pm}$ we need to find.

In fact, repeating the process of \eqref{2.29} yields that
\begin{align}\label{2.31}
\| e^{itH_{0}}\phi-e^{itH}\phi^{\pm}\|_{L^{2}}
&=\|e^{-itH}e^{itH_{0}}\phi-\phi^{\pm}\|_{L^{2}}\nonumber\\
&=\Big\|\displaystyle\int_{t}^{\pm\infty}e^{-isH}(Vu(s))ds\Big\|_{L^{2}}\nonumber\\
&\lesssim  \|Vu(t)\|_{L_{[t, \pm\infty]}^{2}L_{x}^{\frac{2N}{N+4}}}\rightarrow 0
\end{align}
as $t$ tends to $\pm\infty$.

(ii) Applying $V\geq 0$, \eqref{2.17}, H\"{o}lder inequality, Sobolev embedding and Strichartz estimate \eqref{2.2} in turn
gives
\begin{align}\label{2.32}
&\| e^{-it_{1}H}e^{it_{1}H_{0}}\phi-e^{-it_{2}H}e^{it_{2}H_{0}}\phi\|_{\dot{H}^{2}}
\leq\|H^{\frac{1}{2}}(e^{-it_{1}H}e^{it_{1}H_{0}}\phi-e^{-it_{2}H}e^{it_{2}H_{0}}\phi)\|_{L^{2}}\nonumber\\
&=\Big\|H^{\frac{1}{2}}\displaystyle\int_{t_1}^{t_2}e^{-isH}(Vu(s))ds\Big\|_{L^{2}}
\lesssim  \|\nabla(Vu(t))\|_{L_{[t_{1}, t_{2}]}^{2}L_{x}^{\frac{2N}{N+2}}}\nonumber\\
&\lesssim  \big\|V\big\|_{L_{x}^{\frac{N}{4}}}\ \big\|\nabla u\big\|_{L_{[t_{1}, t_{2}]}^{2}L_{x}^{\frac{2N}{N-6}}}
+\big\||x||\nabla V|\big\|_{L_{x}^{\frac{N}{4}}}\ \big\||x|^{-1} u\big\|_{L_{[t_{1}, t_{2}]}^{2}L_{x}^{\frac{2N}{N-6}}}\nonumber\\
&\lesssim  \Big(\big\|V\big\|_{L_{x}^{\frac{N}{4}}}+\big\||x||\nabla V|\big\|_{L_{x}^{\frac{N}{4}}}\Big)
\ \|\Delta u\|_{L_{[t_{1}, t_{2}]}^{2}L_{x}^{\frac{2N}{N-4}}}\rightarrow 0
\end{align}
as $t_{1}, t_{2}\rightarrow \pm\infty$, where in the last inequality we have used the generalized Hardy equality ($N>8$),
which can be stated as follows (e.g., see Theorem B* in Stein-Weiss \cite{SW}):
Let $1<p<\infty$, $0\leq s<\frac{N}{p}$, then we have
$$
\big\||x|^{-s}f\big\|_{L^{p}({\bf R}^{N})}\lesssim \big\||\nabla|^{s}f\big\|_{L^{p}({\bf R}^{N})}.
$$
Therefore, it follows from \eqref{2.29} and \eqref{2.32} that
the limit of $e^{-itH}e^{itH_{0}}\phi$ exists in $H^{2}$ as
$t$ tends to $\pm\infty$, which is denoted by $\phi^{\pm}$. Repeating the process of (i) gives our desired result \eqref{2.27}.
\end{proof}

\section{Local wellposedness theory and scattering criterion }
\setcounter{equation}{0}

Once established Strichartz type estimates Proposition \ref{lem2.1}, Proposition \ref{lem2.3} and Proposition \ref{lem2.4}, then in this section we will
apply them to obtain local well-posedness
result, small data theory, finite $S(\dot{H}^{s_{c}})$ norm condition on scattering and perturbation lemma for ${\rm BNLS_{V}}$ \eqref{1.1},
whose proofs are similar to the case without potential. Let's first look at local well-posedness.

\begin{lemma}\label{lem2.8} Let $V$, $p$ and $N$ satisfy the assumptions of Theorem \ref{th1.2}. Then
the problem ${\rm BNLS_{V}}$ is locally well-posed in $H^{2}({\bf R}^{N})$.
\end{lemma}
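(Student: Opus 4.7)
The approach is the classical contraction mapping argument on the Duhamel formulation
\begin{equation*}
\Phi(u)(t) = e^{itH}u_0 + i\int_0^t e^{i(t-s)H}|u|^{p-1}u(s)\,ds,
\end{equation*}
carried out in an appropriate subset of a Strichartz-type space adapted to the $H^2$ regularity. The key observation is that Proposition \ref{lem2.3} provides exactly the substitute for the free Strichartz estimate \eqref{2.4} that is needed: one gains two derivatives from $u_0$ but only one from the forcing term, so that the $H^2$-scaling nonlinearity can be closed, and the potential $V$ does not spoil this gain.

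\textbf{Step 1: choice of norms.} For a small $T>0$ to be fixed, I would define
\begin{equation*}
X_T = \Big\{ u \in C([0,T]; H^2(\mathbf{R}^N)) \colon \|u\|_{Y_T} \le 2C \|u_0\|_{H^2} \Big\},
\end{equation*}
where $\|\cdot\|_{Y_T}$ is a finite sum of $L^q_T L^r_x$-norms of $u$ and of $\Delta u$ over suitable B-admissible pairs $(q,r)$, chosen so that H\"older/Sobolev embedding closes the nonlinear estimate in Step~2. Note that by Remark \ref{rem1.3}, under the hypotheses on $V$, we have $\|u\|_{H^2} \sim \|u\|_{L^2} + \|H^{1/2}u\|_{L^2}$, so the topology on $X_T$ matches the one natural for the semigroup $e^{itH}$.

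\textbf{Step 2: linear plus nonlinear estimates.} Applying Proposition \ref{lem2.3} (and the $L^2$-level Strichartz estimate \eqref{2.2'}) gives
\begin{equation*}
\|\Phi(u)\|_{Y_T} \le C\|u_0\|_{H^2} + C \big\| \langle\nabla\rangle(|u|^{p-1}u) \big\|_{L^2_T L^{\frac{2N}{N+2}}_x}.
\end{equation*}
The nonlinearity is then controlled by the chain rule, H\"older and Sobolev embedding: there exist exponents encoded in $Y_T$ and $\theta = \theta(p,N) > 0$ such that
\begin{equation*}
\big\| \langle\nabla\rangle(|u|^{p-1}u) \big\|_{L^2_T L^{\frac{2N}{N+2}}_x} \le C\, T^{\theta}\, \|u\|_{Y_T}^{p},
\end{equation*}
the strict subcriticality $p < 1 + \frac{8}{N-4}$ being what produces the positive power of $T$. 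For the difference, the pointwise bound $\bigl||u|^{p-1}u - |v|^{p-1}v\bigr| \lesssim (|u|^{p-1}+|v|^{p-1})|u-v|$ (and its $\nabla$ analogue) yields
\begin{equation*}
\|\Phi(u)-\Phi(v)\|_{Y_T} \le C\, T^{\theta}\bigl(\|u\|_{Y_T} + \|v\|_{Y_T}\bigr)^{p-1}\|u-v\|_{Y_T}.
\end{equation*}

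\textbf{Step 3: contraction and conclusion.} Choosing $T$ small enough that $C T^{\theta}(4C\|u_0\|_{H^2})^{p-1} < 1/2$ makes $\Phi$ a contraction on $X_T$, producing a unique fixed point $u \in C([0,T]; H^2)$. The blow-up alternative and continuous dependence on data are read off from the same estimates in the standard way.

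\textbf{Main obstacle.} The only genuinely new point (compared with the case $V=0$ treated in \cite{Pau3, Guo}) is that the inhomogeneous Strichartz estimate at the $\Delta$-level must be carried out with the semigroup $e^{itH}$ rather than $e^{it\Delta^2}$; this is precisely the content of Proposition \ref{lem2.3}, so once it is invoked the remainder is a routine H\"older/Sobolev computation. Some care is required in checking that the system of admissible indices closes on the whole range $1 + \frac{8}{N} < p < 1 + \frac{8}{N-4}$, which is where the dimensional hypothesis of Theorem \ref{th1.2} effectively enters through Propositions \ref{lem2.1} and \ref{lem2.3}.
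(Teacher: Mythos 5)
Your proposal is correct and follows essentially the same route as the paper: a contraction mapping argument on the Duhamel formula in the ball $\{u:\|\langle\Delta\rangle u\|_{S(L^2,I)}\le 2M\}$, using Proposition \ref{lem2.1} and Proposition \ref{lem2.3} for the linear estimates and H\"older/Sobolev together with the subcriticality $p<1+\frac{8}{N-4}$ to extract a positive power of $T$ (the paper splits into the cases $\frac{4}{p-1}\ge N-4$ and $\frac{4}{p-1}<N-4$ to do this, exactly as your Step 2 anticipates).
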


\begin{proof}
  For $M=c\|u_0\|_{H^2}$, we define a map as
\begin{align}\label{2.33}
\Phi(u)=e^{itH}u_0-i\int_{0}^te^{i(t-s)H}|u|^{p-1}u(s)ds,
\end{align}
and a complete metric space as
\begin{align}\label{2.34}
B_{M}=\big\{u\in C(I,H^2): \|\langle\Delta\rangle u\|_{S(L^2, I)}\leq2M
\big\}
\end{align}
with the metric
$$
d(u, v)=\|u-v\|_{S(L^2, I)},
$$
where $I=[0, T]$.

From the Strichartz estimates \eqref{2.7} and \eqref{2.11}, the Sobolev embedding  and the H\"older inequalities,
it follows that for any $u\in B_{M}$,
\begin{align}\label{2.35}
\|\langle\Delta\rangle\Phi(u)\|_{S(L^2, I)}
&\leq c\|u_0\|_{H^{2}}+c\|u\|_{L^{2(p-1)}(I,L^{\frac{N(p-1)}{2}})}^{p-1}\|\langle\nabla\rangle u\|_{L^\infty(I,L^{\frac{2N}{N-2}})}\nonumber\\
&\quad +c\|u\|_{L^{2(p-1)}(I,L^{\frac{N(p-1)}{2}})}^{p-1}\|u\|_{L^\infty(I,L^{2})}\nonumber\\
&\leq c\|u_0\|_{H^{2}}+c\|u\|_{L^{2(p-1)}(I,L^{\frac{N(p-1)}{2}})}^{p-1}\|\langle\Delta\rangle u\|_{L^\infty(I,L^2)}.
\end{align}
If $\frac{4}{p-1}\geq N-4$, using Sobolev embedding and H\"{o}lder inequality, we get that
\begin{align}\label{2.36}
\|u\|_{L^{2(p-1)}(I,L^{\frac{N(p-1)}{2}})}\leq cT^{\frac{1}{2(p-1)}}\ \big\|\langle\nabla\rangle^{\frac{N}{2}-\frac{2}{p-1}}u\big\|_{L^\infty(I,L^2)}
\leq cT^{\frac{1}{2(p-1)}}\ \big\|\langle\Delta\rangle u\big\|_{L^\infty(I,L^2)}.
\end{align}
If $\frac{4}{p-1}< N-4$, the same argument gives
\begin{align}\label{2.37}
\|u\|_{L^{2(p-1)}(I,L^{\frac{N(p-1)}{2}})}
\leq cT^{\frac{1}{(p-1)}-\frac{N-4}{8}}\ \big\|\langle\Delta\rangle u\big\|_{L^{\frac{8(p-1)}{(N-4)(p-1)-4}}(I,L^{\frac{N(p-1)}{2p}})}.
\end{align}
Putting \eqref{2.35}-\eqref{2.37} together yields that
\begin{align}\label{2.38}
\|\langle\Delta\rangle\Phi(u)\|_{S(L^2, I)}
\leq c\|u_0\|_{H^{2}}+cT^{\theta}\|\langle\Delta\rangle u\|_{S(L^2, I)}^{p},
\end{align}
where $\theta=\frac{1}{2}$ if $\frac{4}{p-1}\geq N-4$, and $\theta=1-\frac{(N-4)(p-1)}{8}>0$ if $\frac{4}{p-1}< N-4$.
Similarly, for any $u, v\in B_{M}$,
\begin{align}\label{2.39}
\|\Phi(u)-\Phi(v)\|_{S(L^2, I)}\leq
cT^{\theta}\Big(\|\langle\Delta\rangle u\|_{S(L^2, I)}^{p-1}+\|\langle\Delta\rangle v\|_{S(L^2, I)}^{p-1}\Big)\ \|u-v\|_{S(L^2, I)}.
\end{align}
From a standard argument, we can obtain that if $T$ is
sufficiently small, the map $u\mapsto \Phi(u)$ is a contraction map
on $B_{M}$. Thus, the contraction mapping principle gives a
unique solution $u$ in $B_{M}$.
\end{proof}

In the same way as the local theory, we can obtain the following small data theory.

\begin{proposition}\label{pro2.9} Let $V$ $p$ and $N$ satisfy the assumptions of Theorem \ref{th1.2},
and assume $u_{0}\in H^2({\bf R}^{N})$, $t_0\in I$ an interval of ${\bf R}$. Then
there exists  $\delta_{sd}>0$ such that if
 $\|e^{itH}u_{0}\|_{S(\dot{H}^{s_{c}}, I)}\leq \delta_{sd}, $  there exists a unique solution
 $u\in C(I,H^2({\bf R}^{N}))$ of \eqref{1.1} with initial data $u_0$. Moreover, the solution has conserved mass and  energy,
 and satisfies
\begin{align}\label{2.40}
 \|u\|_{S(\dot{H}^{s_{c}}, I)}\leq
2\delta_{sd},\ \ \ \|u\|_{L^\infty(I,H^2)}\leq c\|u_0\|_{H^2}.
\end{align}
\end{proposition}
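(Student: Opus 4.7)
The plan is to solve \eqref{1.1} by a standard contraction-mapping argument applied to the Duhamel map
\[
\Phi(u)(t) = e^{itH}u_0 - i\int_{t_0}^t e^{i(t-s)H} |u|^{p-1}u(s)\,ds,
\]
in a complete metric space that encodes both the critical $S(\dot{H}^{s_c}, I)$ norm and the $H^2$-Strichartz norm of $u$. The role of the smallness assumption $\|e^{itH}u_0\|_{S(\dot{H}^{s_c},I)} \le \delta_{sd}$ is to close the critical estimates; the $H^2$ norm will then be propagated automatically by Proposition \ref{lem2.3}. Concretely, set
\[
X = \bigl\{ u : \|u\|_{S(\dot{H}^{s_c},I)} \le 2\delta_{sd},\ \|\langle\Delta\rangle u\|_{S(L^2,I)} \le C_0 \|u_0\|_{H^2} \bigr\},
\]
with metric $d(u,v) = \|u-v\|_{S(\dot{H}^{s_c},I)}$, and choose $C_0$ twice the Strichartz constant.

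The first step is the critical bound. Applying Proposition \ref{lem2.1} with $s = s_c$ yields
\[
\|\Phi(u)\|_{S(\dot{H}^{s_c},I)} \le C\|e^{itH}u_0\|_{S(\dot{H}^{s_c},I)} + C\bigl\| |u|^{p-1}u \bigr\|_{S'(\dot{H}^{-s_c},I)}.
\]
A standard Hölder computation in the scale-invariant pair $(q,r) = \bigl(\tfrac{2(p-1)(N+2)}{N(p-1)-4},\, \tfrac{N(p-1)(N+2)}{N^2+(N-4)(p-1)}\bigr)$ (or any other pair adapted to $\dot{H}^{-s_c}$) gives $\||u|^{p-1}u\|_{S'(\dot{H}^{-s_c},I)} \lesssim \|u\|_{S(\dot{H}^{s_c},I)}^{p}$. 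Hence $\|\Phi(u)\|_{S(\dot{H}^{s_c},I)} \le C\delta_{sd} + C(2\delta_{sd})^p \le 2\delta_{sd}$ provided $\delta_{sd}$ is small.

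The second step propagates the $H^2$-regularity. By Proposition \ref{lem2.3} applied to $\Phi(u)$,
\[
\|\langle\Delta\rangle\Phi(u)\|_{S(L^2,I)} \le C\|u_0\|_{H^2} + C\bigl\|\langle\nabla\rangle (|u|^{p-1}u)\bigr\|_{L^2(I,L^{2N/(N+2)})}.
\]
Using $|\nabla(|u|^{p-1}u)| \lesssim |u|^{p-1}|\nabla u|$, Hölder's inequality, and Sobolev embedding,
\[
\bigl\|\langle\nabla\rangle (|u|^{p-1}u)\bigr\|_{L^2_t L^{2N/(N+2)}_x} \lesssim \|u\|_{L^{2(p-1)}_t L^{N(p-1)/2}_x}^{p-1}\, \|\langle\nabla\rangle u\|_{L^\infty_t L^{2N/(N-2)}_x}.
\]
One verifies that $\bigl(2(p-1), \tfrac{N(p-1)}{2}\bigr) \in \Lambda_{s_c}$, so the first factor is $\lesssim \|u\|_{S(\dot{H}^{s_c},I)}^{p-1} \le (2\delta_{sd})^{p-1}$, and the second factor is $\lesssim \|\langle\Delta\rangle u\|_{S(L^2,I)} \le C_0 \|u_0\|_{H^2}$. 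Choosing $\delta_{sd}$ so that $C(2\delta_{sd})^{p-1} \le \tfrac12$ closes the self-map property on $X$. The contraction estimate $d(\Phi(u),\Phi(v)) \le \tfrac12 d(u,v)$ is obtained from the pointwise bound $\bigl||u|^{p-1}u - |v|^{p-1}v\bigr| \lesssim (|u|^{p-1}+|v|^{p-1})|u-v|$ and the same Hölder + Strichartz scheme, since the relevant $(p-1)$-powers of $u,v$ are all already bounded by $2\delta_{sd}$.

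The Banach fixed point theorem then produces a unique $u \in X$ with $\Phi(u)=u$, and the two $X$-bounds become \eqref{2.40}. The main obstacle, which is essentially bookkeeping, is to select in the first step a pair in $\Lambda_{-s_c}$ on which the Hölder bound for $|u|^{p-1}u$ closes across the full range $N>8$, $1+\tfrac{8}{N}<p<1+\tfrac{8}{N-4}$; this is a scaling check that $s_c \in (0,2)$ makes routine. Conservation of mass and energy are inherited by approximating $u_0$ in $H^2$ by Schwartz data, using the self-adjointness of $H=\Delta^2+V$ (see Remark \ref{rem1.3}) to justify both identities for smooth solutions, and passing to the limit via the Strichartz bounds just established; continuity $u \in C(I,H^2)$ is a standard consequence of the $S(L^2)$-Strichartz estimate for $\langle\Delta\rangle u$ and the Duhamel representation.
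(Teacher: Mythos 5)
Your proposal is correct and follows essentially the same route as the paper: a contraction on the ball defined by the critical $S(\dot{H}^{s_c},I)$ norm together with the $\langle\Delta\rangle$-Strichartz norm, closed via Proposition \ref{lem2.1} for the critical bound and Proposition \ref{lem2.3} with the same Hölder pairing $\|u\|_{L^{2(p-1)}_tL^{N(p-1)/2}_x}^{p-1}\|\langle\nabla\rangle u\|_{L^\infty_tL^{2N/(N-2)}_x}$ for the $H^2$ propagation. The only cosmetic differences are that you carry the full $S(L^2,I)$ norm instead of just $L^\infty(I,L^2)$ and you spell out the approximation argument for the conservation laws, which the paper leaves implicit.
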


\begin{proof}
 For
$\delta=\delta_{sd}$ and $M=c\|u_0\|_{H^2}$, we define a map as
\begin{align}\label{2.41}
\Phi(u)=
e^{i(t-t_0)H_{0}}u_0+i\int_{t_0}^te^{i(t-s)H_{0}}|u|^{p-1}u(s)ds,
\end{align}
and a set as
\begin{align}\label{2.42}
B_{M,\delta}=\big\{v\in C(I,H^2):\ \ \|v\|_{S(\dot{H}^{s_{c}}, I)}\leq2\delta,
\ \ \  \|\langle\Delta\rangle v\|_{L^\infty(I,L^2)}\leq2M \big\}
\end{align}
equipped with the $S(\dot{H}^{s_{c}}, I)$ norm.
Then from the Strichartz estimates \eqref{2.7} and
\eqref{2.11}, using the Sobolev embedding  and H\"older inequality,  we have for any $u\in B_{M,\delta}$,
\begin{align}\label{2.43}
\|\Phi(u)\|_{S(\dot{H}^{s_{c}}, I)}\leq\delta+c\|u\|_{S(\dot{H}^{s_{c}}, I)}^{p},
\end{align}
and
\begin{align}\label{2.44}
\|\langle\Delta\rangle\Phi(u)\|_{L^\infty(I,L^2)}
&\leq c\|\langle\Delta\rangle u_0\|_2+c\|u\|_{L^{2(p-1)}(I,L^{\frac{N(p-1)}{2}})}^{p-1}\|\langle\nabla\rangle u\|_{L^\infty(I,L^{\frac{2N}{N-2}})}\nonumber\\
&+c\|u\|_{L^{2(p-1)}(I,L^{\frac{N(p-1)}{2}})}^{p-1}\| u\|_{L^\infty(I,L^{\frac{2N}{N-2}})}\nonumber\\
 &\leq c\|\langle\Delta\rangle u_0\|_2+c\|u\|_{S(\dot{H}^{s_{c}}, I)}^{p-1}\|\langle\Delta\rangle u\|_{L^\infty(I,L^2)}.
\end{align}
Moreover, for any $u,v\in B_{M,\delta}$,
\begin{align}\label{2.45}
\|\Phi(u)-\Phi(v)\|_{S(\dot{H}^{s_{c}}, I)}\leq c\Big(\|u\|_{S(\dot{H}^{s_{c}}, I)}^{p-1}
+\|v\|_{S(\dot{H}^{s_{c}}, I)}^{p-1}\Big)\ \|u-v\|_{S(\dot{H}^{s_{c}}, I)}.
\end{align}
From a standard argument, we can obtain that if $\delta$ is
sufficiently small, the map $u\mapsto \Phi(u)$ is a contraction map
on $B_{M,\delta}$. Thus,  the contraction mapping principle gives a
unique solution $u$ in $B_{M,\delta}$ satisfying \eqref{2.40}.
\end{proof}

Now we turn to use a similar argument as in \cite{Pau3} to establish the following scattering result,
which can be combined with Proposition \ref{pro2.9}
to get a scattering result of small data.

\begin{proposition}\label{pro2.10}
Let $V$, $p$ and $N$ satisfy the assumptions of Theorem \ref{th1.2}. If $u(t)\in C({\bf R},H^2({\bf R}^{N}))$ be a solution of \eqref{1.1} such that $\sup_{t\in {\bf R}}\|u(t)\|_{H^{2}}<\infty$. If
 $\|u\|_{S(\dot{H}^{s_{c}})}<\infty$,
then $u(t)$ scatters in $H^2({\bf R}^{N})$.
That is , there exists $\phi^{\pm}\in H^2({\bf R}^{N})$ such that
$$
\lim_{t\rightarrow\pm\infty}\|u(t)-e^{itH}\phi^{\pm}\|_{H^2({\bf R}^{N})}=0.
$$
\end{proposition}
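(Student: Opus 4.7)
The plan is to show that $\{e^{-itH}u(t)\}$ is Cauchy in $H^2$ as $t\to+\infty$ (and symmetrically as $t\to-\infty$), which identifies scattering states $\phi^\pm:=\lim_{t\to\pm\infty}e^{-itH}u(t)$. Writing the Duhamel representation for \eqref{1.1},
$$e^{-it_2H}u(t_2)-e^{-it_1H}u(t_1)=-i\lambda\int_{t_1}^{t_2}e^{-isH}|u|^{p-1}u(s)\,ds,$$
the task reduces to estimating the right-hand side in $H^2$ and showing it vanishes as $t_1,t_2\to+\infty$.

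For the $L^2$ component, I would apply the dual Strichartz estimate from Proposition \ref{lem2.1} with $s=0$, bounding
$$\Big\|\int_{t_1}^{t_2}e^{-isH}|u|^{p-1}u\,ds\Big\|_{L^2}\lesssim \||u|^{p-1}u\|_{S'(L^2,[t_1,t_2])}.$$
For the $\dot H^2$ component, I would invoke the smoothing estimate of Proposition \ref{lem2.4} together with the norm equivalence $\|H_0^{1/2}f\|_{L^2}\sim\|\Delta f\|_{L^2}$ to obtain
$$\Big\|\Delta\int_{t_1}^{t_2}e^{-isH}|u|^{p-1}u\,ds\Big\|_{L^2}\lesssim \||\nabla|(|u|^{p-1}u)\|_{L^2_{[t_1,t_2]}L^{2N/(N+2)}_x}.$$
The decisive step is then to partition $[0,\infty)=\bigcup_{j=1}^{J}I_j$ into finitely many intervals on which $\|u\|_{S(\dot H^{s_c},I_j)}\le \eta$ for a small fixed $\eta$; this is possible since $\|u\|_{S(\dot H^{s_c})}<\infty$ by hypothesis. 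On each $I_j$, H\"older's inequality in space and time, combined with Sobolev embedding $\dot H^2\hookrightarrow L^{2N/(N-4)}$ and the uniform bound $\sup_t\|u(t)\|_{H^2}<\infty$, leads to estimates of the schematic form
$$\||u|^{p-1}u\|_{S'(L^2,I_j)}+\big\||\nabla|(|u|^{p-1}u)\big\|_{L^2 L^{2N/(N+2)}(I_j)}\lesssim \|u\|_{S(\dot H^{s_c},I_j)}^{p-1}\|\langle\Delta\rangle u\|_{L^\infty_{I_j}L^2_x}.$$
Summing over $I_j\subset[t_1,\infty)$ and observing that $\|u\|_{S(\dot H^{s_c},[t_1,\infty))}\to 0$ as $t_1\to+\infty$ yields the desired Cauchy property; a routine repetition of the same estimate then gives $\|u(t)-e^{itH}\phi^+\|_{H^2}\to0$.

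The main technical obstacle is the bookkeeping in the $\dot H^2$ estimate: after applying the smoothing inequality one must distribute a single derivative on $|u|^{p-1}u$ via a fractional Leibniz rule and allocate the resulting norms so that exactly one factor carries the scaling-critical $\dot H^{s_c}$-Strichartz norm (which is small on tail intervals) while the remaining factor is absorbed by the uniform $H^2$ bound. Making this splitting consistent with a pair in $\Lambda_{s_c}$ and its dual in $\Lambda'_{-s_c}$, and converting between $H^{1/4}$- and $|\nabla|$-derivatives via Lemma \ref{acta} when necessary, is the delicate part. The backward case $t\to-\infty$ follows by the same argument applied on $(-\infty,0]$.
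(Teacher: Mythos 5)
Your proposal is correct and follows essentially the same route as the paper: the paper likewise bounds the Duhamel tail $\int_{t_1}^{t_2}e^{-isH}(|u|^{p-1}u)\,ds$ in $H^2$ by combining Proposition \ref{lem2.1} (for the $L^2$ part) with the smoothing estimate of Proposition \ref{lem2.4} (for the $\dot H^2$ part), then applies H\"older and Sobolev to extract $\|u\|_{S(\dot H^{s_c},[t_1,t_2])}^{p-1}\sup_t\|u(t)\|_{H^2}$, which vanishes as $t_1,t_2\to\pm\infty$. The only cosmetic difference is your partition of $[0,\infty)$ into small-norm intervals, which is redundant here since the direct observation that the tail norm $\|u\|_{S(\dot H^{s_c},[t_1,\infty))}$ tends to zero already suffices.
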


\begin{proof}
 We claim that
\begin{align}\label{2.46}
\phi^{\pm}:=u_{0}-i\displaystyle\int_{0}^{\pm\infty}e^{-isH}(|u|^{p-1}u)(s)ds
\end{align}
exist in $H^{2}$. Indeed, using $V\geq 0$, Propostion \ref{lem2.1} and Proposition \ref{lem2.4} gives
\begin{align}\label{2.47}
\Big\|\displaystyle\int_{t_{1}}^{t_{2}}e^{-isH}(|u|^{p-1}u)(s)ds\Big\|_{H^{2}}
&\lesssim \Big\|H_{0}^{\frac{1}{2}}\displaystyle\int_{t_{1}}^{t_{2}}e^{-isH}(|u|^{p-1}u)(s)ds\Big\|_{L^{2}}\nonumber\\
&\quad+\Big\|\displaystyle\int_{t_{1}}^{t_{2}}e^{-isH}(|u|^{p-1}u)(s)ds\Big\|_{L^{2}}\nonumber\\
&\lesssim \big\|\nabla(|u|^{p-1}u)\big\|_{L_{[t_{1}, t_{2}]}^{2}L_{x}^{\frac{2N}{N+2}}}
+\big\||u|^{p-1}u\big\|_{L_{[t_{1}, t_{2}]}^{2}L_{x}^{\frac{2N}{N+4}}}\nonumber\\
&\lesssim \|u\|_{L_{[t_{1}, t_{2}]}^{2(p-1)}L_{x}^{\frac{N(p-1)}{2}}}^{p-1}
\|\nabla u\|_{L_{[t_{1}, t_{2}]}^{\infty}L_{x}^{\frac{2N}{N-2}}}\nonumber\\
&\quad+\|u\|_{L_{[t_{1}, t_{2}]}^{2(p-1)}L_{x}^{\frac{N(p-1)}{2}}}^{p-1}
\|\nabla u\|_{L_{[t_{1}, t_{2}]}^{\infty}L_{x}^{2}}\nonumber\\
&\lesssim  \|u\|_{S(\dot{H}^{s_{c}}, [t_{1}, t_{2}])}^{p-1}\|u(t)\|_{H^{2}}\rightarrow 0,
\end{align}
as $t_{1}, t_{2}$ tend to $\pm\infty$.

Hence, $\phi^{\pm}$ is well defined. Then, using \eqref{2.46} and repeating the above estimates again, we obtain that
\begin{align}\label{2.47}
\|u(t)-e^{itH}\phi^{\pm}\|_{H^2}
&=\Big\|\displaystyle\int_{0}^{\pm\infty}e^{-isH}(|u|^{p-1}u)(s)ds\Big\|_{H^2}\nonumber\\
&\lesssim  \|u\|_{S(\dot{H}^{s_{c}}, [t, \infty])}^{p-1}\sup_{t\in{\bf R}}\|u(t)\|_{H^{2}}
\rightarrow 0,
\end{align}
as $t$ tends to $\pm\infty$.
\end{proof}

Finally, we state a useful perturbation lemma, whose proof shall be omitted, since it is similar to that for \cite{Guo}.

\begin{lemma}\label{lem2.11}
Let $V$, $p$ and $N$ satisfy the assumptions of Theorem \ref{th1.2}.
 Then for any given $A$, there exist $\epsilon_0=\epsilon_0(A,n,p)$ and
$c=c(A)$ such that for any $\epsilon\leq\epsilon_0$, any interval $I
=(T_1,T_2)\subset {\bf R}$ and any
 $\tilde{u}=\tilde{u}(x,t)\in H^2$ satisfying
$$
 i\tilde{u}_{t}+H \tilde{u}-|\tilde{u}|^{p-1}\tilde{u}=e,
 $$
if for some $(q,r)\in\Lambda_{-s_c}$,
 $$
 \|\tilde{u}\|_{S(\dot{H}^{s_{c}}, I)}\leq A,\ \   \|e\|_{L^{q'}(I;L^{r'})}\leq \epsilon
 $$
and
$$
\|e^{i(t-t_0)H}(u(t_0)-\tilde{u}(t_0)\|_{S(\dot{H}^{s_{c}}, I)}\leq
\epsilon,
$$
then the solution $u\in C(I;H^2)$ of \eqref{1.1}
satisfying
 $$
 \|u-\tilde u\|_{S(\dot{H}^{s_{c}}, I)}\leq c(A)\epsilon.
 $$
 \end{lemma}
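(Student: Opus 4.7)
The plan is to adapt the classical perturbation argument from the $V=0$ setting (as in \cite{Guo}) to our potential framework, the key point being that the $\dot{H}^{s_c}$-Strichartz estimates of Proposition \ref{lem2.1} hold in exactly the same form for $e^{itH}$ as for $e^{it\Delta^2}$. Set $w = u - \tilde{u}$, so that $w$ solves
\begin{equation*}
 iw_t + Hw - \bigl(|u|^{p-1}u - |\tilde{u}|^{p-1}\tilde{u}\bigr) = -e, \qquad w(t_0) = u(t_0) - \tilde{u}(t_0).
\end{equation*}
By Duhamel and Proposition \ref{lem2.1} applied on a subinterval $J \subset I$ containing $t_0$,
\begin{equation*}
 \|w\|_{S(\dot{H}^{s_c}, J)} \leq \|e^{i(t-t_0)H}w(t_0)\|_{S(\dot{H}^{s_c}, J)} + C\bigl\| |u|^{p-1}u - |\tilde{u}|^{p-1}\tilde{u}\bigr\|_{S'(\dot{H}^{-s_c}, J)} + C\|e\|_{L^{q'}(J;L^{r'})},
\end{equation*}
where the first and third terms are already controlled by $\epsilon$.

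The main task is the nonlinear difference. Using the pointwise bound $\bigl||u|^{p-1}u - |\tilde u|^{p-1}\tilde u\bigr| \lesssim (|\tilde u|^{p-1} + |w|^{p-1})|w|$ and H\"older's inequality in a $(\dot H^{-s_c})$-dual admissible pair, one obtains
\begin{equation*}
 \bigl\||u|^{p-1}u - |\tilde u|^{p-1}\tilde u\bigr\|_{S'(\dot{H}^{-s_c}, J)} \lesssim \bigl(\|\tilde u\|_{S(\dot{H}^{s_c}, J)}^{p-1} + \|w\|_{S(\dot{H}^{s_c}, J)}^{p-1}\bigr)\|w\|_{S(\dot{H}^{s_c}, J)}.
\end{equation*}
To close the estimate, partition $I$ into finitely many consecutive subintervals $I_1, \ldots, I_K$ with $K = K(A, \eta)$ such that $\|\tilde{u}\|_{S(\dot{H}^{s_c}, I_j)} \leq \eta$, where $\eta$ is a small absolute constant chosen so that $C\eta^{p-1} \leq 1/2$. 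This partition exists because $\|\tilde u\|_{S(\dot H^{s_c},I)} \leq A < \infty$.

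On the first interval $I_1$, a standard continuity argument bootstraps $\|w\|_{S(\dot{H}^{s_c}, I_1)} \leq 2C\epsilon$ provided $\epsilon$ is small enough. To iterate, one needs to show that the ``new initial error'' $\|e^{i(t-t_1)H}w(t_1)\|_{S(\dot{H}^{s_c}, I_2)}$ stays controlled; this follows from Proposition \ref{lem2.1} and Duhamel applied on $I_1$, combining the previous bound on $w|_{I_1}$ with the error on $I_1$. Iterating $K$ times produces constants of the form $C^K$, which are absorbed into $c(A)$ since $K = K(A)$.

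The main obstacle I anticipate is purely technical: ensuring that one has enough freedom among the admissible pairs in $\Lambda_{s_c}$ and $\Lambda_{-s_c}$ to run H\"older on the nonlinear difference in the exotic Strichartz spaces (particularly checking the endpoint exponents for $1+\tfrac{8}{N} < p < 1 + \tfrac{8}{N-4}$ with $N > 8$), and then carefully tracking how $\epsilon_0$ must shrink at each iteration so that the bootstrap closes uniformly over the $K$ subintervals. This is routine given the Strichartz machinery already built in Section \ref{sec-2}, but is the step where one must be most careful about the dependence of $\epsilon_0$ on $A$.
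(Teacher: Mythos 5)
Your proposal is correct and is essentially the argument the paper intends: the paper omits the proof of Lemma \ref{lem2.11}, deferring to the $V=0$ case in \cite{Guo}, and that proof is exactly the standard long-time perturbation scheme you describe (Duhamel for $w=u-\tilde u$, subdivision of $I$ into $K=K(A)$ pieces on which $\|\tilde u\|_{S(\dot H^{s_c},I_j)}$ is small, bootstrap and iteration), with the only new ingredient being that the exotic Strichartz estimates of Proposition \ref{lem2.1} hold for $e^{itH}$ in the same form as for $e^{it\Delta^2}$. You correctly identify that ingredient, so nothing essential is missing.
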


\section{Sharp Gagliardo-Nirenberg inequality}\label{sec-5}
\setcounter{equation}{0}

In this section,  under the assumptions of Theorem \ref{th1.2}, we will find a minimizing sequence of the nonlinear functional
\begin{align}\label{3.1}
J_{V}(u)=\frac{\|u\|_{L^2}^{p+1-\frac{N(p-1)}{4}}(\|\Delta u\|_{L^2}^{2}
+\displaystyle\int_{{\bf R}^{N}}V|u|^{2}dx)^{\frac{N(p-1)}{8}}}{\| u\|_{L^{p+1}}^{p+1}}.
\end{align}
It's known from \cite{Guo, Zhu} that for $V=0$, $J_{0}(u)$ attains its minimum $J_{0}$  at $u=Q(x)\geq 0$, which solves the equation \eqref{1.4}, and
\begin{align}\label{3.2}
J_{0}=J_{0}(Q)=\frac{\|Q\|_{L^2}^{p+1-\frac{N(p-1)}{4}}\|\Delta Q\|_{L^2}^{\frac{N(p-1)}{4}}}{\|Q\|_{L^{p+1}}^{p+1}},
\end{align}
which together with the identities
\begin{align}\label{3.3}
\|\Delta Q\|_{L^2}^2=\frac{N(p-1)}{4(p+1)}\|Q\|_{L^{p+1}}^{p+1},\ \
\|Q\|_2^2=\frac{p-1}{2(p+1)}\|Q\|_{L^{p+1}}^{p+1},\ \
E_0(Q)=\frac{N(p-1)-8}{8(p+1)}\|Q\|_{L^{p+1}}^{p+1},
\end{align}
implies that the best constant of the Gagliardo-Nirenberg inequality
\begin{align}\label{3.4}
\|u\|_{L^{p+1}}^{p+1}\leq C_{GN} \|u\|_{L^2}^{p+1-\frac{N(p-1)}{4}}\|\Delta
u\|_{L^2}^{\frac{N(p-1)}{4}}
\end{align}
is
\begin{align}\label{3.5}
 C_{GN}=\frac
1J_{0}=\frac{4(p+1)}{N(p-1)}\frac1{\|Q\|_{L^2}^{p+1-\frac{N(p-1)}{4}}\|\Delta
Q\|_{L^2}^{\frac{N(p-1)}{4}-2} }
\end{align}

\begin{lemma}\label{lem3.1}
If $V\geq 0$, then $\{Q(\cdot-n)\}_{n=1}^{\infty}$ is a minimizing
sequence for $J_{V}(u)$.
\end{lemma}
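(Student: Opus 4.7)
The proof hinges on two ingredients: the translation invariance of every ingredient of $J_V$ \emph{except} the potential term, and the decay of $V$ at infinity.

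First I would establish the lower bound $\inf_{u \in H^2} J_V(u) \geq J_0$. Since $V \geq 0$ (this is the hypothesis of the lemma, and is automatic from $V \geq 0$; compare Remark \ref{rem1.3}), we have
\begin{equation*}
\|\Delta u\|_{L^2}^{2} + \int_{\mathbf{R}^N} V|u|^2\, dx \geq \|\Delta u\|_{L^2}^{2}
\end{equation*}
for every $u \in H^2$. Plugging this into the definition \eqref{3.1} gives $J_V(u) \geq J_0(u) \geq J_0$, where the second inequality is the Gagliardo-Nirenberg inequality \eqref{3.4} with sharp constant $C_{GN} = 1/J_0$.

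Next I would check that $J_V(Q_n) \to J_0$ where $Q_n(x) := Q(x-n)$. By translation invariance of the Lebesgue measure,
\begin{equation*}
\|Q_n\|_{L^2} = \|Q\|_{L^2}, \quad \|\Delta Q_n\|_{L^2} = \|\Delta Q\|_{L^2}, \quad \|Q_n\|_{L^{p+1}} = \|Q\|_{L^{p+1}}.
\end{equation*}
The only term that genuinely depends on $n$ is
\begin{equation*}
\int_{\mathbf{R}^N} V(x)|Q_n(x)|^2\, dx = \int_{\mathbf{R}^N} V(x+n)|Q(x)|^2\, dx.
\end{equation*}
Since $V$ is continuous and $|V(x)| \lesssim (1+|x|)^{-\beta}$ with $\beta > N+4$, we have $V(x+n) \to 0$ pointwise as $|n| \to \infty$, and the integrand is dominated by $\|V\|_{L^\infty} |Q(x)|^2 \in L^1(\mathbf{R}^N)$. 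The dominated convergence theorem then yields $\int V(x+n)|Q(x)|^2\, dx \to 0$.

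Substituting into \eqref{3.1} and using continuity of $t \mapsto t^{N(p-1)/8}$ at $t = \|\Delta Q\|_{L^2}^2 > 0$, I would conclude
\begin{equation*}
J_V(Q_n) \longrightarrow \frac{\|Q\|_{L^2}^{p+1-\frac{N(p-1)}{4}}\|\Delta Q\|_{L^2}^{\frac{N(p-1)}{4}}}{\|Q\|_{L^{p+1}}^{p+1}} = J_0
\end{equation*}
as $|n| \to \infty$. Combined with the lower bound from the first step, this shows $\inf J_V = J_0$ and $\{Q(\cdot - n)\}$ realizes it asymptotically, so it is a minimizing sequence. There is no serious obstacle here; the only mild point to verify is the dominated convergence applied to $V(x+n)|Q|^2$, which is immediate from the pointwise decay of $V$ and the $L^2$-integrability of $Q$.
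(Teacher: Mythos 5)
Your proposal is correct and follows essentially the same two-step argument as the paper: the lower bound $J_V(u)\ge J_0(u)\ge J_0(Q)$ from $V\ge 0$ and the sharp Gagliardo--Nirenberg inequality, together with $J_V(Q(\cdot-n))\to J_0(Q)$ since only the potential term depends on $n$. Your dominated-convergence justification that $\int V(x)|Q(x-n)|^2\,dx\to 0$ is in fact slightly more explicit than the paper's, which only records a uniform bound on that term.
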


\begin{proof}
it follows from \eqref{3.2}, \eqref{3.4} and \eqref{3.5} that
\begin{align}\label{3.6}
J_{0}(Q)\leq J_{0}(u).
\end{align}
On the one hand,
\begin{align}\label{3.7}
\lim_{n\rightarrow\infty}J_{V}(Q(\cdot-n))=J_{0}(Q),
\end{align}
where we used the inequality
\begin{align}\label{3.8}
\displaystyle\int_{{\bf R}^{N}}V(x)Q(x-n)^{2}dx\lesssim \|V\|_{L^{\frac{N}{4}}}\|Q\|_{L^{\frac{2N}{N-4}}}^{2}
\lesssim \|V\|_{L^{\frac{N}{4}}}\|\Delta Q\|_{L^2}^{2}
\end{align}
On the other hand, for $V\geq 0$, it is easy to see that
\begin{align}\label{3.9}
J_{0}(u)\leq J_{V}(u)
\end{align}
Putting \eqref{3.6}, \eqref{3.7} and \eqref{3.9} together yields that
\begin{align}\label{3.10}
\lim_{n\rightarrow\infty}J_{V}(Q(\cdot-n))\leq J_{V}(u).
\end{align}
Thus, we get our desired result.
\end{proof}

\begin{remark}\label{rem3.2}
It follows from lemma 3.1 that $J_0(Q)=\lim_{n\rightarrow\infty}J_{V}(Q(\cdot-n))\leq J_V(u)$ holds
for any $u$, which implies that the following sharp inequality holds: 
\begin{align}\label{3.11}
\|u\|_{L^{p+1}}^{p+1}\leq C_{GN} \|u\|_{L^2}^{p+1-\frac{N(p-1)}{4}}\|H^{\frac12}
u\|_{L^2}^{\frac{N(p-1)}{4}},
\end{align}
where $ C_{GN}$ is the same Gagliardo-Nirenberg constant \eqref{3.5}. \end{remark}

\section{Criteria for global well-posedness}
\setcounter{equation}{0}
In this section we first give a criteria for global well-posedness, but we omit its proof, since
it is similar to that of Theorem 4.1 in \cite{Guo}. Indeed, it suffices to use the previous section's result \eqref{3.11} and
replace $\Delta$ by $H^{\frac{1}{2}}$ in the proof.

\begin{theorem}\label{th4.1}  Let $V$, $p$ and $N$ the assumptions of Theorem \ref{th1.2} hold, $u_{0}\in H^{2}({\bf R}^{N})$ and  $I=(T_{-},T_{+})$  be the
maximal time interval of existence of $u(t)$ solving \eqref{1.1}.
Suppose that
\begin{align}\label{4.1}
M(u)^{\frac{2-s_c}{s_c}}E(u)<M(Q)^{\frac{2-s_c}{s_c}}E_{0}(Q).
\end{align}
If \eqref{4.1} holds and
\begin{align}\label{4.2}
\|u_{0}\|_{L^{2}}^{\frac{2-s_c}{s_c}}\|H^{\frac{1}{2}} u_{0}\|_{L^{2}}<\|Q\|_{L^{2}}^{\frac{2-s_c}{s_c}}\|\Delta Q\|_{L^{2}},
\end{align}
then $I=(-\infty,+\infty)$, i.e., the solution exists globally in
time, and for all time ~$t\in {\bf{R}},$~
\begin{align}\label{4.3}
\|u(t)\|_{L^{2}}^{\frac{2-s_c}{s_c}}\|H^{\frac{1}{2}} u(t)\|_{L^{2}}<\|Q\|_{L^{2}}^{\frac{2-s_c}{s_c}}\|\Delta Q\|_{L^{2}}.
\end{align}
If \eqref{4.1} holds and
\begin{align}\label{4.4}
\|u_{0}\|_{L^{2}}^{\frac{2-s_c}{s_c}}\|H^{\frac{1}{2}} u_{0}\|_{L^{2}}>\|Q\|_{L^{2}}^{\frac{2-s_c}{s_c}}\|\Delta Q\|_{L^{2}},
\end{align}
then for $t\in I,$
\begin{align}\label{4.5}
\|u(t)\|_{L^{2}}^{\frac{2-s_c}{s_c}}\|H^{\frac{1}{2}} u(t)\|_{L^{2}}>\|Q\|_{L^{2}}^{\frac{2-s_c}{s_c}}\|\Delta Q\|_{L^{2}}.
\end{align}
\end{theorem}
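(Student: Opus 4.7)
My plan is to mimic the classical Kenig--Merle dichotomy argument, adapted to the operator $H$ in place of $\Delta$, by exploiting the sharp Gagliardo--Nirenberg inequality \eqref{3.11} of Remark \ref{rem3.2} together with the conservation of mass and energy. The central idea is to introduce the scale-invariant functional
\[
f(t):=\|u(t)\|_{L^2}^{\frac{2-s_c}{s_c}}\|H^{\frac12}u(t)\|_{L^2},
\]
and show it is trapped in one of two connected components determined by the ground-state threshold.

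First I would substitute \eqref{3.11} into the definition of the energy $E(u)$ (recall $V\geq 0$, so $\|H^{1/2}u\|_{L^2}^2=\|\Delta u\|_{L^2}^2+\int V|u|^2$) to obtain
\[
E(u)\;\geq\;\tfrac12\|H^{\frac12}u\|_{L^2}^2-\tfrac{C_{GN}}{p+1}\|u\|_{L^2}^{p+1-\frac{N(p-1)}{4}}\|H^{\frac12}u\|_{L^2}^{\frac{N(p-1)}{4}}.
\]
Multiplying by $M(u)^{(2-s_c)/s_c}$ and using the scaling $s_c=\frac{N}{2}-\frac{4}{p-1}$, one rewrites this as $M(u)^{(2-s_c)/s_c}E(u)\geq g(f(t))$, where
\[
g(y)\;=\;\tfrac12 y^{2}-\tfrac{C_{GN}}{p+1}\,y^{\frac{N(p-1)}{4}}.
\]
Since $\frac{N(p-1)}{4}>2$ in the $L^2$-supercritical regime, $g$ has a unique positive maximum at some $y_\ast>0$. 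Using the Pohozaev identities \eqref{3.3} and the expression \eqref{3.5} for $C_{GN}$, a direct computation shows $y_\ast=\|Q\|_{L^2}^{(2-s_c)/s_c}\|\Delta Q\|_{L^2}$ and $g(y_\ast)=M(Q)^{(2-s_c)/s_c}E_0(Q)$.

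Now I would invoke \eqref{4.1}: by conservation of mass and energy the quantity $M(u(t))^{(2-s_c)/s_c}E(u(t))$ is a constant strictly less than $g(y_\ast)$. Hence for every $t\in I$,
\[
g(f(t))\;\leq\;M(u_0)^{\frac{2-s_c}{s_c}}E(u_0)\;<\;g(y_\ast),
\]
which forces $f(t)\neq y_\ast$. Since $t\mapsto f(t)$ is continuous on $I$ (from $u\in C(I;H^2)$ and $V\in L^{N/4}$, which gives $\|H^{1/2}u\|_{L^2}\sim\|\Delta u\|_{L^2}$), $f$ stays in the connected component of its initial value. If \eqref{4.2} holds then $f(0)<y_\ast$ and hence \eqref{4.3} follows for all $t\in I$; this uniform control plus the equivalence $\|H^{1/2}u\|_{L^2}^2\geq \|\Delta u\|_{L^2}^2$ bounds $\|u(t)\|_{H^2}$ uniformly on $I$, and the standard blow-up alternative from Lemma \ref{lem2.8} upgrades $I$ to $(-\infty,+\infty)$. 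Symmetrically, \eqref{4.4} implies $f(0)>y_\ast$ and hence \eqref{4.5}.

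The only delicate point I anticipate is verifying that $y_\ast$ and $g(y_\ast)$ really coincide with the ground-state quantities on the right-hand sides of \eqref{4.1}--\eqref{4.2}; this is a purely algebraic check using \eqref{3.2}--\eqref{3.5}, identical to the $V=0$ case in \cite{Guo} once $\|\Delta u\|_{L^2}$ is replaced by $\|H^{1/2}u\|_{L^2}$. The use of the potential-dependent sharp inequality \eqref{3.11} (rather than \eqref{3.4}) is precisely what makes the same Kenig--Merle threshold coincide with the ground state $Q$ of the free problem, so no new variational analysis is required.
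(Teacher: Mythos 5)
Your argument is correct and is precisely the route the paper intends: it omits the proof of Theorem \ref{th4.1}, referring to Theorem 4.1 of \cite{Guo} with $\Delta$ replaced by $H^{\frac12}$ and the free Gagliardo--Nirenberg inequality replaced by the potential version \eqref{3.11}, which is exactly the substitution you carry out. Your algebraic identification of $y_\ast$ and $g(y_\ast)$ with the ground-state quantities via \eqref{3.3}--\eqref{3.5}, the conservation laws, and the continuity/connectedness trapping argument all check out.
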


\begin{remark}\label{rem4.2} From the proof of Theorem \ref{th4.1}, we conclude that if the condition \eqref{4.2} holds, then
there exists $\delta>0$ such that
$M(u)^{\frac{2-s_c}{s_c}}E(u)<(1-\delta)M(Q)^{\frac{2-s_c}{s_c}}E_{0}(Q)$,
and thus there exists $\delta_0=\delta_0(\delta)$ such that
$\|u(t)\|_{L^{2}}^{\frac{2-s_c}{s_c}}\|H^{\frac{1}{2}} u(t)\|_{L^{2}}<
(1-\delta_0)\|Q\|_{L^{2}}^{\frac{2-s_c}{s_c}}\|\Delta Q\|_{L^{2}}.$
\end{remark}

The next two lemmas provide some additional properties for the solution $u$  under the
hypotheses \eqref{4.1} and \eqref{4.2} of Theorem \ref{th4.1}.
These lemmas
 will be needed in the proof of Theorem \ref{th1.2} through a virial-type estimate, which will be established  in the last
two sections.

\begin{lemma}\label{lem4.3}
In the situation of Theorem \ref{th4.1},
take $\delta>0$ such that
$M(u_0)^{\frac{2-s_c}{s_c}}E(u_0)<(1-\delta)M(Q)^{\frac{2-s_c}{s_c}}E_{0}(Q)$.
If $u$ is a solution of the problem \eqref{1.1} with initial data $u_0$,
then there exists $C_\delta>0$ such that for all $t\in{\bf R}$,
\begin{align}\label{4.6}
\|\Delta u\|_{L^{2}}^2-\frac{N(p-1)}{4(p+1)}\|u\|_{L^{p+1}}^{p+1}\geq
C_\delta\|\Delta u\|_2^2.
\end{align}
\end{lemma}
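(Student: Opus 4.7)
The plan is to derive the coercivity estimate by combining the classical sharp Gagliardo--Nirenberg inequality \eqref{3.4} (with its best constant \eqref{3.5} realized at $Q$) with the propagated kinetic bound from Theorem \ref{th4.1} and the fact that $V\ge 0$ implies $\|\Delta u\|_{L^2}\le \|H^{1/2}u\|_{L^2}$. The key point is that, although the hypothesis of Theorem \ref{th4.1} controls $\|H^{1/2}u\|_{L^2}$, the same quantity dominates $\|\Delta u\|_{L^2}$, so we may use the classical GN in terms of $\|\Delta u\|_{L^2}$ rather than the weaker one from Remark \ref{rem3.2}.

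First, plug $C_{GN}$ from \eqref{3.5} into \eqref{3.4} and rearrange to obtain
\begin{equation*}
\frac{N(p-1)}{4(p+1)}\|u\|_{L^{p+1}}^{p+1} \le \left(\frac{\|u\|_{L^2}^{(2-s_c)/s_c}\|\Delta u\|_{L^2}}{\|Q\|_{L^2}^{(2-s_c)/s_c}\|\Delta Q\|_{L^2}}\right)^{\!4s_c/(N-2s_c)} \|\Delta u\|_{L^2}^{2},
\end{equation*}
where the exponent $4s_c/(N-2s_c)$ is what one gets by using $N(p-1)/4-2 = 4s_c/(N-2s_c)$ and $p+1-N(p-1)/4 = 4(2-s_c)/(N-2s_c)$ together with $\frac{(2-s_c)}{s_c}\cdot\frac{4s_c}{N-2s_c} = \frac{4(2-s_c)}{N-2s_c}$, matching precisely the $\|u\|_{L^2}$ power on the left-hand side of GN. This is a routine algebraic manipulation.

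Second, since $V\ge 0$ we have $\|\Delta u(t)\|_{L^2}\le \|H^{1/2}u(t)\|_{L^2}$, and by Remark \ref{rem4.2} the hypothesis $M(u_0)^{(2-s_c)/s_c}E(u_0)<(1-\delta)M(Q)^{(2-s_c)/s_c}E_0(Q)$ yields some $\delta_0=\delta_0(\delta)>0$ such that
\begin{equation*}
\|u(t)\|_{L^2}^{(2-s_c)/s_c}\|H^{1/2}u(t)\|_{L^2} \le (1-\delta_0)\|Q\|_{L^2}^{(2-s_c)/s_c}\|\Delta Q\|_{L^2}
\end{equation*}
for all $t\in\mathbb{R}$. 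Combining these two pieces:
\begin{equation*}
\frac{N(p-1)}{4(p+1)}\|u\|_{L^{p+1}}^{p+1} \le (1-\delta_0)^{4s_c/(N-2s_c)}\,\|\Delta u\|_{L^2}^{2},
\end{equation*}
and so \eqref{4.6} holds with $C_\delta := 1-(1-\delta_0)^{4s_c/(N-2s_c)}>0$, which depends only on $\delta$ (through $\delta_0$) and on $N,p$.

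There is no real obstacle here; the only subtlety is deciding which form of the sharp Gagliardo--Nirenberg inequality to invoke. The natural temptation is to use the $H^{1/2}u$-version from Remark \ref{rem3.2} (since that is the quantity directly controlled by the energy hypothesis), but doing so would only produce a lower bound for $\|H^{1/2}u\|_{L^2}^2 - \frac{N(p-1)}{4(p+1)}\|u\|_{L^{p+1}}^{p+1}$, not for $\|\Delta u\|_{L^2}^2 - \frac{N(p-1)}{4(p+1)}\|u\|_{L^{p+1}}^{p+1}$; since $\|\Delta u\|_{L^2}\le \|H^{1/2}u\|_{L^2}$ the former is \emph{larger} than the latter, so this bound goes the wrong way. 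The fix is to use the classical GN (attained at $Q$ in the free case) to place $\|\Delta u\|_{L^2}$ on the right-hand side from the start, and then to absorb the weaker inequality $\|\Delta u\|_{L^2}\le \|H^{1/2}u\|_{L^2}$ only at the moment of invoking the hypothesis on the $H$-kinetic energy.
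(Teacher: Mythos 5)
Your proposal is correct and follows essentially the same route as the paper: the paper likewise invokes Remark \ref{rem4.2} to get the $(1-\delta_0)$ bound on $\|u\|_{L^2}^{(2-s_c)/s_c}\|H^{1/2}u\|_{L^2}$, uses $V\ge 0$ to transfer it to $\|\Delta u\|_{L^2}$, and then defers to the classical Gagliardo--Nirenberg computation of Lemma 4.2 in \cite{Guo}, which is exactly the algebra you carried out explicitly. Your remark about why the $H^{1/2}$-version of the sharp inequality would give a bound in the wrong direction is a correct and worthwhile clarification of why the argument is ordered this way.
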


\begin{proof}
 By Remark \ref{rem4.2}, there exists $\delta_0=\delta_0(\delta)$ such that
\begin{align}\label{4.7}
\|u(t)\|_{L^{2}}^{\frac{2-s_c}{s_c}}\|H^{\frac{1}{2}} u(t)\|_{L^{2}}<
(1-\delta_0)\|Q\|_{L^{2}}^{\frac{2-s_c}{s_c}}\|\Delta Q\|_{L^{2}}.
\end{align}
Since $V$ is nonnegative, it is obvious that
\begin{align}\label{4.8}
\|\Delta u\|_{L^{2}}\leq \|H^{\frac{1}{2}} u\|_{L^{2}},
\end{align}
which combined with \eqref{4.7} yields that
\begin{align}\label{4.9}
\|u(t)\|_{L^{2}}^{\frac{2-s_c}{s_c}}\|\Delta u(t)\|_{L^{2}}<
(1-\delta_0)\|Q\|_{L^{2}}^{\frac{2-s_c}{s_c}}\|\Delta Q\|_{L^{2}}.
\end{align}
The remaining proof is the same as that for Lemma 4.2 in \cite{Guo}.
\end{proof}

The following lemma is about the comparability of the gradient and the total energy, and we
omit its proof as well, since we only replace $\Delta$ by $H^{\frac{1}{2}}$ in the proof of Lemma 4.3 in \cite{Guo}.

\begin{lemma}\label{lem4.4}
In the situation of Theorem \ref{th4.1}, we have
\begin{align}\label{4.10}
\frac{N(p-1)-8}{2N(p-1)}\|H^{\frac{1}{2}} u(t)\|_{L^{2}}^2\leq E(u)\leq\frac{1}{2}\|H^{\frac{1}{2}} u(t)\|_{L^{2}}^2.
\end{align}
\end{lemma}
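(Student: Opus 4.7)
The plan is to decompose $E(u)$ using the nonnegativity of $V$ (Remark \ref{rem1.3}) as
$$E(u) = \tfrac12\|H^{1/2}u\|_{L^2}^2 - \tfrac{1}{p+1}\|u\|_{L^{p+1}}^{p+1},$$
and then to establish the two inequalities separately. The upper bound is immediate because in the focusing case $\lambda=-1$ one subtracts a nonnegative term, so $E(u)\leq \tfrac12\|H^{1/2}u\|_{L^2}^2$ with no further work.

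The lower bound is where the content lies, and my plan is to mimic the argument of Lemma 4.3 in \cite{Guo} with $\Delta$ replaced by $H^{1/2}$, using the sharp Gagliardo--Nirenberg inequality \eqref{3.11} from Remark \ref{rem3.2}. Concretely, set $\alpha = p+1 - \tfrac{N(p-1)}{4}$ and $\beta = \tfrac{N(p-1)}{4}$, insert the explicit best constant $C_{GN}$ from \eqref{3.5} into \eqref{3.11}, and rewrite the nonlinear part as
$$\tfrac{1}{p+1}\|u\|_{L^{p+1}}^{p+1} \leq \tfrac{4}{N(p-1)}\,\frac{\|u\|_{L^2}^{\alpha}\|H^{1/2}u\|_{L^2}^{\beta-2}}{\|Q\|_{L^2}^{\alpha}\|\Delta Q\|_{L^2}^{\beta-2}}\,\|H^{1/2}u\|_{L^2}^2.$$
The crucial algebraic check is the identity $\alpha/(\beta-2) = (2-s_c)/s_c$, which follows directly from $s_c = N/2 - 4/(p-1)$; it lets me rewrite the ratio factor as the $(\beta-2)$-th power of $\|u\|_{L^2}^{(2-s_c)/s_c}\|H^{1/2}u\|_{L^2}/\|Q\|_{L^2}^{(2-s_c)/s_c}\|\Delta Q\|_{L^2}$, which by Theorem \ref{th4.1} (specifically \eqref{4.3}) is strictly less than $1$. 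This yields $\tfrac{1}{p+1}\|u\|_{L^{p+1}}^{p+1} < \tfrac{4}{N(p-1)}\|H^{1/2}u\|_{L^2}^2$, and subtracting from $\tfrac12\|H^{1/2}u\|_{L^2}^2$ gives the coefficient $\tfrac12 - \tfrac{4}{N(p-1)} = \tfrac{N(p-1)-8}{2N(p-1)}$, which is positive precisely because of the mass-supercritical hypothesis $p > 1 + 8/N$.

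The main obstacle worth flagging—though it is mild—is that one must be certain the sharp Gagliardo--Nirenberg constant $C_{GN}$ in \eqref{3.11}, with $H^{1/2}u$ in place of $\Delta u$, is genuinely the \emph{same} constant determined by the potential-free ground state $Q$ through \eqref{3.5}. Remark \ref{rem3.2} asserts exactly this (as a consequence of Lemma \ref{lem3.1} and $V\geq 0$), so the computation goes through verbatim; without that sharp-constant statement, one would only obtain the lower bound up to a non-explicit factor and might not recover the precise coefficient $(N(p-1)-8)/(2N(p-1))$.
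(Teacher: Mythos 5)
Your proof is correct and follows exactly the route the paper intends: the paper omits the proof, stating only that one replaces $\Delta$ by $H^{\frac12}$ in the proof of Lemma 4.3 of \cite{Guo}, which is precisely your argument via the decomposition $E(u)=\tfrac12\|H^{1/2}u\|_{L^2}^2-\tfrac1{p+1}\|u\|_{L^{p+1}}^{p+1}$, the sharp Gagliardo--Nirenberg inequality \eqref{3.11}, the exponent identity $\alpha/(\beta-2)=(2-s_c)/s_c$, and the bound \eqref{4.3}. Your flag about needing the sharp constant in \eqref{3.11} to coincide with the potential-free one is exactly the role of Remark \ref{rem3.2}, so nothing is missing.
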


Finally, we give the result about existence of wave operators, which will be used to established the scattering theory.

\begin{proposition}\label{pro4.5} Under the assumptions of Theorem \ref{th1.2}, and
 suppose $ \psi^\pm\in H^2({\bf R}^{N})$ and
\begin{align}\label{4.11}
\frac{1}{2}\|\psi^\pm\|_{L^{2}}^{\frac{2(2-s_c)}{s_c}}\|H^{\frac{1}{2}}\psi^\pm\|_{L^{2}}^2<E_{0}(Q)M(Q)^{\frac{2-s_c}{s_c}}.
\end{align}
Then there exists $v_0\in H^2({\bf R}^{N})$ such that the solution $v$ of
\eqref{1.1} with initial data $v_0$ obeys the assumptions \eqref{4.1} and \eqref{4.2} and satisfies
\begin{align}\label{4.12}
\lim_{t\rightarrow\pm\infty}\|v(t)-e^{itH}\psi^\pm\|_{H^2({\bf R}^{N})}=0.
\end{align}
\end{proposition}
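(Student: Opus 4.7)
The plan is to construct $v$ by solving the final-state problem on a half-line $[T,\infty)$ via a contraction argument, then extend backward using the global criterion of Theorem \ref{th4.1}. I will treat the $+\infty$ case; the $-\infty$ case is symmetric. First I would fix the integral formulation
\[
 v(t)\;=\;e^{itH}\psi^{+}\;-\;i\int_{t}^{\infty}e^{i(t-s)H}|v|^{p-1}v(s)\,ds,
\]
and run a fixed point on a large time interval $[T,\infty)$. By Proposition \ref{lem2.1} applied to $e^{itH}\psi^{+}$ with $s=s_c$, we have $\|e^{itH}\psi^{+}\|_{S(\dot H^{s_c},\mathbb R)}\lesssim \|\psi^{+}\|_{\dot H^{s_c}}<\infty$, since $0<s_c<2$ and $\psi^{+}\in H^{2}$. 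Monotone convergence then gives $\|e^{itH}\psi^{+}\|_{S(\dot H^{s_c},[T,\infty))}\to 0$ as $T\to\infty$. Choosing $T$ large enough that this tail is smaller than $\delta_{sd}/2$, a contraction on the ball
\[
 B_{M,\delta_{sd}}=\bigl\{v\in C([T,\infty);H^{2}):\|v\|_{S(\dot H^{s_c},[T,\infty))}\le 2\delta_{sd},\ \|\langle\Delta\rangle v\|_{L^\infty([T,\infty);L^{2})}\le 2M\bigr\}
\]
runs just as in Proposition \ref{pro2.9}, with the key ingredients being the $\dot H^{s_c}$ Strichartz estimate (Proposition \ref{lem2.1}) and the derivative bound via Proposition \ref{lem2.3} and the smoothing estimate of Proposition \ref{lem2.4}. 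This produces a unique $v\in C([T,\infty);H^{2})$ solving \eqref{1.1} on $[T,\infty)$.

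Next I would verify the asymptotic convergence $\|v(t)-e^{itH}\psi^{+}\|_{H^{2}}\to 0$ as $t\to\infty$. The Duhamel remainder at time $t$ is bounded, using Proposition \ref{lem2.1} together with the smoothing estimate of Proposition \ref{lem2.4} (to handle the $H^{1/2}$ derivative outside the Duhamel integral without paying two derivatives on the nonlinearity), by a product of $\|v\|_{S(\dot H^{s_c},[t,\infty))}^{p-1}$ and $\|v\|_{L^\infty([t,\infty);H^{2})}$, and the first factor tends to $0$ as $t\to\infty$. This is the same mechanism as in Proposition \ref{pro2.10}.

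Then I would check that $v(T)$ satisfies the hypotheses of Theorem \ref{th4.1}. Mass conservation gives $M(v)=\lim_{t\to\infty}\|v(t)\|_{L^{2}}^{2}=\|\psi^{+}\|_{L^{2}}^{2}$. For the energy, conservation of $E$ and the $H^{2}$ convergence imply $E(v)=\lim_{t\to\infty}E(e^{itH}\psi^{+})$, and since $\|e^{itH}\psi^{+}\|_{L^{p+1}}\to 0$ along a subsequence (an $L^{q}_{t}L^{p+1}_{x}$ Strichartz norm is finite because $(q,p+1)$ can be placed in $\Lambda_{s_c}$ via Sobolev embedding) while $\|H^{1/2}e^{itH}\psi^{+}\|_{L^{2}}=\|H^{1/2}\psi^{+}\|_{L^{2}}$ by spectral theorem, we obtain $E(v)=\tfrac12\|H^{1/2}\psi^{+}\|_{L^{2}}^{2}$. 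Combining with $M(v)=\|\psi^{+}\|_{L^{2}}^{2}$, hypothesis \eqref{4.11} becomes exactly $M(v)^{(2-s_c)/s_c}E(v)<M(Q)^{(2-s_c)/s_c}E_{0}(Q)$, i.e.\ \eqref{4.1}. For \eqref{4.2} at time $T$, observe $\|v(T)\|_{L^{2}}^{(2-s_c)/s_c}\|H^{1/2}v(T)\|_{L^{2}}$ is close to $\|\psi^{+}\|_{L^{2}}^{(2-s_c)/s_c}\|H^{1/2}\psi^{+}\|_{L^{2}}$; using the identity $E_{0}(Q)M(Q)^{(2-s_c)/s_c}=\tfrac{N(p-1)-8}{2N(p-1)}\|Q\|_{L^{2}}^{2(2-s_c)/s_c}\|\Delta Q\|_{L^{2}}^{2}$ from \eqref{3.3}, the bound \eqref{4.11} forces $\|\psi^{+}\|_{L^{2}}^{(2-s_c)/s_c}\|H^{1/2}\psi^{+}\|_{L^{2}}$ to lie strictly below the $Q$-threshold, so \eqref{4.2} holds at $t=T$ (choosing $T$ larger if necessary).

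By Theorem \ref{th4.1} the solution extends to all of $\mathbb R$ with \eqref{4.2} preserved in time, and we set $v_{0}:=v(0)$. The main obstacle is the passage to the limit for the energy and the $H^{2}$-norm of the Duhamel tail: to handle the second derivative one cannot simply invoke \eqref{2.2'} because of the $\tfrac{2N}{N+4}$ gap, which is precisely where the smoothing estimate of Proposition \ref{lem2.4} is essential (paying only one derivative on $|v|^{p-1}v$); the remaining $H^{2}$ bookkeeping then reduces to the Strichartz chain already used in Lemma \ref{lem2.8} and Proposition \ref{pro2.9}.
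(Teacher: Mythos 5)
Your proposal is correct and follows essentially the same route as the paper: a final-state contraction on $[T,\infty)$ via the small-data machinery of Proposition \ref{pro2.9}, asymptotic $H^{2}$-convergence of the Duhamel tail using Propositions \ref{lem2.1} and \ref{lem2.4}, verification of \eqref{4.1}--\eqref{4.2} at $v(T)$ through the limits of $M$, $E$ and $\|H^{1/2}v(t)\|_{L^2}$ together with the identity from \eqref{3.3}, and backward extension by Theorem \ref{th4.1}. The only cosmetic difference is that you compute $E(v)=\tfrac12\|H^{1/2}\psi^{+}\|_{L^2}^2$ exactly via vanishing of the $L^{p+1}$ norm along a subsequence, whereas the paper simply drops the negative nonlinear term to get the needed inequality; both suffice.
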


\begin{proof}
 Similar to the proof of the small data scattering theory Proposition \ref{pro2.9},
we can solve the integral equation
\begin{align}\label{4.13}
v(t)=e^{itH}\psi^+ +i\int_t^\infty e^{i(t-s)H}|v|^{p-1}v(s)ds
\end{align}
for $t\geq T$ with $T$ large.

 In fact, there exists some large $T$ such that
$\|e^{itH}\psi^+\|_{S(\dot{H}^{s_{c}}, [T,\infty))}\leq \delta_{sd},$
where $\delta_{sd}$ is defined by Proposition \ref{pro2.9}.
Then, the same arguments as used in Proposition \ref{pro2.9} give a solution
$v\in C([T,\infty),H^2)$ of \eqref{4.13}.
Moreover, we also have
\begin{align}\label{4.14}
\|v\|_{S(\dot{H}^{s_{c}}, [T,\infty))}\leq2\delta_{sd},\quad
{\rm and}\quad
 \|v\|_{L^{\infty}([T,\infty); H^2)} \leq c\|v_0\|_{H^{2}}.
\end{align}
Thus by Proposition \ref{lem2.1}, Proposition \ref{lem2.4},
\begin{align}\label{4.15}
&\big\| v-e^{itH}\psi^+\big\|_{L_{[T,\infty)}^{\infty}H_{x}^{2}}
=\left\|\int_t^\infty e^{i(t-s)H}|v|^{p-1}v(s)ds\right\|_{L_{[T,\infty)}^{\infty}H_{x}^{2}}\nonumber\\
&\leq \left\|H_{0}^{\frac{1}{2}}\int_t^\infty e^{i(t-s)H}(|v|^{p-1}v)(s)ds\right\|_{L_{[T,\infty)}^{\infty}L_{x}^{2}}
+\left\|\int_t^\infty e^{i(t-s)H}(|v|^{p-1}v)(s)ds\right\|_{L_{[T,\infty)}^{\infty}L_{x}^{2}}\nonumber\\
&\leq c\|v\|_{L_{[T, \infty)}^{2(p-1)}L_{x}^{\frac{N(p-1)}{2}}}^{p-1}
\|\nabla v\|_{L_{[T, \infty)}^\infty L_{x}^{\frac{2N}{N-2}}}
+c\|v\|_{L_{[T, \infty)}^{2(p-1)}L_{x}^{\frac{N(p-1)}{2}}}^{p-1}\|v\|_{L_{[T, \infty)}^\infty L_{x}^{2}}\nonumber\\
&\leq c\|v\|_{S(\dot{H}^{s_{c}}, [T, \infty))}^{p-1}\| v\|_{H_{x}^2}.
\end{align}
we get that
\begin{align}\label{4.16}
\| v-e^{itH}\psi^+\|_{L_{[T,\infty)}^{\infty}H_{x}^{2}} \rightarrow 0
\ \  {\rm as} ~T\rightarrow \infty,
\end{align}
which implies $v(t)-e^{itH}\psi^+\rightarrow0 $ in $H^2({\bf R}^{N})$ as $t\rightarrow+\infty$. Thus by Sobolev embedding, we obtain that
$v(t)-e^{itH}\psi^+\rightarrow0 $ in $L_{x}^{p+1}({\bf R}^{N})$ as $t\rightarrow+\infty$, which implies that
$\lim_{t\rightarrow+\infty}E(v(t))=\lim_{t\rightarrow+\infty}E(e^{itH}\psi^+)$. Thus, in view of \eqref{4.11}, we obtain that
\begin{align}\label{4.17}
M(v(T))^{\frac{ 2-s_c }{s_c}}E(v(T))
&= \lim_{t\rightarrow+\infty}M(v(t))^{\frac{ 2-s_c }{s_c}}E(v(t))\nonumber\\
&= \lim_{t\rightarrow+\infty}M(e^{itH}\psi^+)^{\frac{ 2-s_c }{s_c}}E(e^{itH}\psi^+)\nonumber\\
&=\lim_{t\rightarrow+\infty}\|\psi^+\|_{L^{2}}^{\frac{ 4-2s_c }{s_c}}
\Big(\frac{1}{2}\|H^{\frac{1}{2}}\psi^+\|_{L^2}^{2}-\frac{1}{p+1}\|e^{itH}\psi^+\|_{L_{x}^{p+1}}^{p+1}\Big)\nonumber\\
&\leq\frac{1}{2}\|\psi^+\|_{L^{2}}^{\frac{2(2-s_c)}{s_c}}\|H^{\frac{1}{2}}\psi^\pm\|_{L^2}^{2}<E_{0}(Q)M(Q)^{\frac{2-s_c}{s_c}}.
\end{align}
Moreover, we note that
\begin{align}\label{4.18}
&\lim_{t\rightarrow\infty}\|v(t)\|_{L_{x}^2}^{\frac{2(2-s_c)}{s_c}}\|H^{\frac{1}{2}} v(t)\|_{L_{x}^2}^2
=\lim_{t\rightarrow\infty}\|e^{itH}\psi^+\|_{L_{x}^2}^{\frac{2(2-s_c)}{s_c}}\|H^{\frac{1}{2}} e^{itH}\psi^+\|_{L_{x}^2}^{2}\nonumber\\
&=\|\psi^+\|_{L_{x}^2}^{\frac{2(2-s_c)}{s_c}}\|H^{\frac{1}{2}}\psi^+\|_{L_{x}^2}^2<2E_{0}(Q)M(Q)^{\frac{2-s_c}{s_c}}\nonumber\\
&=\frac{N(p-1)-8}{N(p-1)}\|Q\|_{L^{2}}^{\frac{2(2-s_c)}{s_c}}\|\Delta Q\|_{L^{2}}^2
<\|Q\|_{L^{2}}^{\frac{2(2-s_c)}{s_c}}\|\Delta Q\|_{L^{2}}^2.
\end{align}
Hence, for sufficiently large $T$, $v(T)$ satisfies \eqref{4.1} and \eqref{4.2}, which implies that $v(t)$ is a global solution in $H_{x}^{2}({\bf R}^{N})$.
Thus, we can evolve $v(t)$ from $T$ back to the initial time 0. By the same way, we can show \eqref{4.12} for negative time.
\end{proof}

\section{Existence and compactness of a critical element}
\setcounter{equation}{0}

\begin{definition}\label{def5.1}
We say that $SC(u_0)$ holds if for $u_0\in H^2({\bf R}^{N})$ satisfying
$$\|u_{0}\|^{\frac{2-s_c}{s_c}}_{L^{2}}\|H^{\frac{1}{2}}
u_{0}\|_{L^{2}}<\|Q\|^{\frac{2-s_c}{s_c}}_{L^{2}}\|\Delta Q\|_{L^{2}}$$ and
$$E(u_{0})M(u_{0})^{\frac{2-s_c}{s_c}}<E_{0}(Q)M(Q)^{\frac{2-s_c}{s_c}}, $$
 the corresponding solution $u$  of \eqref{1.1} with the maximal
interval of existence $I=(-\infty,+\infty)$ satisfies
\begin{align}\label{5.1}
\|u\|_{S(\dot{H}^{s_{c}})}<+\infty.
\end{align}
\end{definition}

We first claim  that  there exists $\delta>0$ such that if
$$E(u)M(u)^{\frac{2-s_c}{s_c}}<\delta, \ \ \
\|u_{0}\|^{\frac{2-s_c}{s_c}}_{L^{2}}\|H^{\frac{1}{2}}
u_{0}\|_{L^{2}}<\|Q\|^{\frac{2-s_c}{s_c}}_{L^{2}}\|\Delta Q\|_{L^{2}},$$ then
\eqref{5.1} holds.
In fact, by Proposition \ref{lem2.1},
the norm equivalence Remark \ref{rem1.3} and \eqref{4.10}, we have
\begin{align}\label{5.2}
\|e^{itH}u_{0}\|_{S(\dot{H}^{s_{c}})}^{\frac{2}{s_{c}}}
\lesssim  \||\nabla|^{s_{c}}u_{0}\|_{L^{2}}^{\frac{2}{s_{c}}}
\lesssim\|u_{0}\|_{L^{2}}^{\frac{4-2s_{c}}{s_{c}}}\|\Delta u_{0}\|_{L^{2}}^{2}
\sim \|u_{0}\|_{L^{2}}^{\frac{4-2s_{c}}{s_{c}}}\|H^{\frac{1}{2}} u_{0}\|_{L^{2}}^{2}
\sim E(u_{0})M(u_{0})^{\frac{2-s_c}{s_c}}.
\end{align}
Hence, it follows from Proposition \ref{pro2.9} and Proposition \ref{pro2.10} that \eqref{5.1} holds for all
sufficiently small $\delta>0$.

Now for each $\delta>0$, we define the set $S_\delta$ to be the collection
of all such initial data in $H^2$ :
\begin{align}\label{5.3}
S_\delta=\Big\{u_0\in H^2({\bf R}^{N}):\ \  E(u)M(u)^{\frac{2-s_c}{s_c}}<\delta \ \  and \ \
\|u_{0}\|^{\frac{2-s_c}{s_c}}_{2}\|H^{\frac{1}{2}} u_{0}\|_{2}<\|Q\|^{\frac{2-s_c}{s_c}}_{2}\|\Delta Q\|_{2}\Big\}.
\end{align}
We also define
\begin{align}\label{5.4}
(M^{\frac{2-s_c}{s_c}}E)_c=\sup\big\{\delta:\ \ u_0\in S_\delta\Rightarrow SC(u_0)\ \  holds \big \}.
\end{align}
If $(M^{\frac{2-s_c}{s_c}}E)_c=M(Q)^{\frac{2-s_c}{s_c}}E_{0}(Q)$, then we are done. Thus we assume now
\begin{align}\label{5.5}
(M^{\frac{2-s_c}{s_c}}E)_c<M(Q)^{\frac{2-s_c}{s_c}}E_{0}(Q).
\end{align}
Our goal in this section is to show the existence of an $H^2({\bf R}^{N})$
solution $u_c$ of \eqref{1.1} with the initial data $u_{c,0}$ such that
\begin{align}\label{5.6}
\| u_{c,0}\|^{\frac{2-s_c}{s_c}}_{L^{2}}\|H^{\frac{1}{2}}u_{c,0}\|_{L^{2}}
<\|Q\|^{\frac{2-s_c}{s_c}}_{L^{2}}\|\Delta Q\|_{L^{2}},
\end{align}
\begin{align}\label{5.7}
M(u_c)^{\frac{2-s_c}{s_c}}E(u_c)= (M^{\frac{2-s_c}{s_c}}E)_c
\end{align}
and
$ SC(u_{c,0})$ does not hold. Moreover, we  show that if
$\|u_c\|_{S(\dot{H}^{s_{c}})}=\infty$, then
$K=\{u_c(x,t)|t\in {\bf R}\}$ is precompact in  $H^2({\bf R}^{N})$.

Prior to fulfilling  our main task, we  first establish the decay property for the semigroup
$e^{itH}\phi$ in $L^{p+1}$, where $1<p<1+\frac{8}{N-4}$ and $\phi\in H^{2}({\bf R}^{N})$. It was well-known that
$L^1$-$L^\infty$ estimates of $e^{itH}\phi$ can imply the decay property. However, as far as we know,
 there are not our required dispersive estimate at present. Hence, we shall give a detailed proof
of the decay property.

\begin{lemma}\label{lem5.2}
 $\lim_{t\rightarrow \infty}\|e^{itH}\phi\|_{L^{p+1}}=0$ for $1<p<1+\frac{8}{N-4}$ and
$\phi\in H^{2}({\bf R}^{N})$.
\end{lemma}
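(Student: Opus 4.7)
The plan is to reduce the claim to the analogous decay for the free biharmonic propagator via a linear-scattering (asymptotic completeness) argument, exploiting the hypotheses $N>8$ and $V \in L^{N/4}$ (which follow from those of Theorem \ref{th1.2}).

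\textbf{Step 1 (Inverse wave operator).} I would first show that for every $\phi\in H^2$ the limit
\[
\tilde{\phi}:=\lim_{t\to+\infty} e^{-itH_{0}}\,e^{itH}\phi
\]
exists in $H^{2}(\mathbf{R}^{N})$, so that $\|e^{itH}\phi - e^{itH_{0}}\tilde\phi\|_{H^2}\to 0$ as $t\to\infty$. By Duhamel, $e^{-itH_{0}}e^{itH}\phi = \phi + i\int_{0}^{t} e^{-isH_{0}} V e^{isH}\phi\,ds$, so it suffices to show the integral is Cauchy in $H^{2}$. This mirrors the proof of Proposition \ref{lem2.7}, with the roles of $H$ and $H_{0}$ swapped: the $L^{2}$-part uses the dual free Strichartz pair $(2, \tfrac{2N}{N+4})$ together with $\|V\|_{L^{N/4}}<\infty$ and the $e^{isH}$-Strichartz estimate $\|e^{isH}\phi\|_{L^{2}_{t}L^{2N/(N-4)}_{x}}\lesssim \|\phi\|_{L^{2}}$ from Proposition \ref{lem2.1}. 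The $H^{2}$-part is handled by the free-flow smoothing estimate \eqref{2.16}, decomposing $\nabla(V e^{isH}\phi)=V\,\nabla e^{isH}\phi+(\nabla V)\,e^{isH}\phi$, then bounding each term via H\"older, Proposition \ref{lem2.3} applied to $u=e^{isH}\phi$, and the generalized Hardy inequality (the latter being exactly the step that forces $N>8$, as in \eqref{2.32}).

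\textbf{Step 2 (Free-flow decay in $L^{p+1}$).} Next, I would establish $\|e^{itH_{0}}\tilde\phi\|_{L^{p+1}}\to 0$ for every $\tilde\phi\in H^{2}$. For $\tilde\phi\in L^{1}\cap H^{2}$, the free dispersive estimate \eqref{freeL1Linfty} gives $\|e^{itH_{0}}\tilde\phi\|_{L^{\infty}}\lesssim |t|^{-N/4}\|\tilde\phi\|_{L^{1}}$, and interpolation with the $L^{2}$-conservation yields
\[
\|e^{itH_{0}}\tilde\phi\|_{L^{p+1}} \leq \|e^{itH_{0}}\tilde\phi\|_{L^{2}}^{\frac{2}{p+1}}\,\|e^{itH_{0}}\tilde\phi\|_{L^{\infty}}^{\frac{p-1}{p+1}} \lesssim |t|^{-\frac{N(p-1)}{4(p+1)}}\to 0.
\]
For general $\tilde\phi\in H^{2}$, approximate by $\tilde\phi_{n}\in L^{1}\cap H^{2}$ via spatial cutoff (noting $H^{2}\hookrightarrow L^{2N/(N-4)}$ makes the truncation integrable on compacts); since $2<p+1<\tfrac{2N}{N-4}$, Sobolev embedding plus free $H^2$-conservation gives the uniform-in-$t$ bound $\|e^{itH_{0}}(\tilde\phi-\tilde\phi_{n})\|_{L^{p+1}}\lesssim \|\tilde\phi-\tilde\phi_{n}\|_{H^{2}}$, and a standard $3\varepsilon$ argument closes the case.

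\textbf{Step 3 (Synthesis) and main obstacle.} Combining Steps 1 and 2 with Sobolev embedding $H^{2}\hookrightarrow L^{p+1}$ gives
\[
\|e^{itH}\phi\|_{L^{p+1}} \leq \|e^{itH}\phi - e^{itH_{0}}\tilde\phi\|_{L^{p+1}} + \|e^{itH_{0}}\tilde\phi\|_{L^{p+1}} \lesssim \|e^{itH}\phi - e^{itH_{0}}\tilde\phi\|_{H^{2}} + \|e^{itH_{0}}\tilde\phi\|_{L^{p+1}} \xrightarrow[t\to\infty]{} 0,
\]
which is exactly the lemma. I expect the main technical obstacle to be the $H^{2}$-convergence of the Duhamel integral in Step 1, particularly the $(\nabla V)\,e^{isH}\phi$ piece: handling it cleanly requires the generalized Hardy inequality in the form $\||x|^{-1} g\|_{L^{2N/(N-6)}}\lesssim \||\nabla| g\|_{L^{2N/(N-6)}}$, which is precisely where $N>8$ is needed. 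Once that is granted, everything else is a direct transcription of the estimates already appearing in Proposition \ref{lem2.7} and the $L^{1}\cap H^{2}$ density argument for the free flow.
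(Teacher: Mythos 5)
Your argument is correct, but it takes a genuinely different (and heavier) route than the paper's. The paper's proof is soft: since $\bigl(\tfrac{8(p+1)}{N(p-1)},\,p+1\bigr)$ is a B-admissible pair, the Strichartz estimate \eqref{2.2'} places the scalar function $g(t)=\|e^{itH}\phi\|_{L^{p+1}}$ in $L^{q}_{t}({\bf R})$ for a finite $q$; it then checks that $g$ is uniformly continuous via Gagliardo--Nirenberg interpolation between $L^{2}$ and $\dot H^{2}$ together with strong continuity of the unitary group on $L^{2}$ (which is automatically uniform because $\|e^{it_{1}H}\phi-e^{it_{2}H}\phi\|_{L^{2}}$ depends only on $t_{1}-t_{2}$), and a uniformly continuous function in $L^{q}({\bf R})$ must vanish at infinity. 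You instead go through asymptotic completeness --- existence of $\lim_{t\to\infty}e^{-itH_{0}}e^{itH}\phi$ in $H^{2}$, the mirror image of Proposition \ref{lem2.7} with the free smoothing estimate \eqref{2.16} playing the role of \eqref{2.17} --- followed by the free dispersive decay and a density argument. All the ingredients you invoke are indeed available in the paper (Propositions \ref{lem2.1} and \ref{lem2.3} for the inner perturbed flow, H\"older with $V,\ |x||\nabla V|\in L^{N/4}$, and the generalized Hardy inequality for $N>8$), so your proof closes. What your route buys is a quantitative decay rate $|t|^{-N(p-1)/(4(p+1))}$ for $L^{1}\cap H^{2}$ data and an inverse wave operator that the paper never constructs; what it costs is the dimension restriction $N>8$ and the full weighted decay of $\nabla V$, neither of which the paper's short argument needs.
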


\begin{proof}
In view of the  the Strichartz estimates,
\begin{align}\label{5.8}
\|e^{itH}\phi\|_{L_{t}^{\frac{8(p+1)}{N(p-1)}}L_{x}^{p+1}}\leq \|\phi\|_{L^{2}},
\end{align}
it suffices to prove that $\lim_{t\rightarrow \infty}\|e^{itH}\phi\|_{L^{p+1}}$ exists.
To this end,  we need to  show that the map $e^{itH}\phi: t\mapsto L^{p+1}$ is uniformly bounded
and uniformly continuous. Uniformly boundedness can be followed from Sobolev embedding and the equivalence norm
Remark \ref{rem1.3},
that is, for $1<p<1+\frac{8}{N-4}$,
\begin{align}\label{5.9}
\|e^{itH}\phi\|_{L^{p+1}}\lesssim \|e^{itH}\phi\|_{H^{2}}\lesssim \|\phi\|_{H^{2}}.
\end{align}
On the other hand, for any $t_{1}, t_{2}\in {\bf R}$, applying Gagliardo-Nrenberg inequality and the equivalence norm gives
\begin{align}\label{5.10}
 \big\|e^{it_{1}H}\phi-e^{it_{2}H}\phi\big\|_{L^{p+1}}
 &\lesssim \big \|e^{it_{1}H}\phi-e^{it_{2}H}\phi\big\|_{L^{2}}^{1-\frac{N(p-1)}{4(p+1)}}\
\big\|\Delta(e^{it_{1}H}\phi-e^{it_{2}H}\phi)\big\|_{L^{2}}^{\frac{N(p-1)}{4(p+1)}}\nonumber\\
&\lesssim  \big\|e^{it_{1}H}\phi-e^{it_{2}H}\phi\big\|_{L^{2}}^{1-\frac{N(p-1)}{4(p+1)}}
\ \big\|\phi\big\|_{H^{2}}^{\frac{N(p-1)}{4(p+1)}}
\end{align}
As $e^{it_{1}H}$ is a strongly continuous semigroup in $L^{2}$, $e^{itH}\phi: t\mapsto L^{2}$
is an uniformly continuous functional. So it follows from \eqref{5.10} that $e^{itH}\phi: t\mapsto L^{p+1}$
is also an uniformly continuous functional.
\end{proof}

Next, following the idea of Hong \cite{Hong}, we establish
 the linear decomposition  associated with a perturbed linear propagator $e^{itH_{r_{n}}}$,
 where $$H_{r_{n}}=\Delta^2+\frac1{r_n^4}V\Big(\frac1{r_n}\Big).$$
 The profile decomposition
associated with the free linear propagator $e^{it\Delta}$ was established in \cite{Duy, Holmer} by using the concentration
compactness principle in the spirit of Keraani \cite{Ker} and Kenig and Merle \cite{Kenig}.
We refer to \cite{J-P-S} for the linear profile decomposition
for the one-dimensional fourth-order Schr\"{o}dinger equaiton.

\begin{proposition}\label{pro5.3} Let $V$, $p$ and $N$  satisfy the assumptions of Theorem \ref{th1.2}, $\phi_{n}(x)$ be radial and uniformly
bounded  in $H^{2}({\bf R}^{N})$, and  $r_{n}=1, r_{n}\rightarrow 0$ or $r_{n}\rightarrow\infty$.
Then for each $M$ there exists a
subsequence of $\phi_{n}$, which is denoted by itself, such that the
following statements hold.

(i) For each $1\leq j\leq M$, there exists
(fixed in n) a radial profile $\psi^{j}(x)$ in $H^2({\bf R}^{N})$ and
 a sequence (in $n$) of time
shifts $t_{n}^{j}$, and there exists a sequence (in $n$) of
remainders $W_{n}^{M}(x)$ in $H^2({\bf R}^{N})$  such that
\begin{align}\label{5.11}
\phi_{n}(x)=\sum_{j=1}^{M}e^{-it_{n}^{j}H_{r_{n}}}\psi^{j}(x)+W_{n}^{M}(x).
\end{align}

(ii) The time  sequences have a pairwise divergence property, i.e.,  for $1\leq j\neq k\leq M$,
\begin{align}\label{5.12}
\lim_{n\rightarrow+\infty}
|t_{n}^{j}-t_{n}^{k}|=+\infty.
\end{align}

(iii) The remainder sequence has the following asymptotic smallness
property:
\begin{align}\label{5.13}
\lim_{M\rightarrow+\infty}\Big(\lim_{n\rightarrow+\infty}\|e^{itH_{r_{n}}}W_{n}^{M}\|_{S(\dot{H}^{s_{c}})}\Big)=0.
\end{align}

(iv) For each fixed $M$, we have the asymptotic
Pythagorean expansion as follows
\begin{align}\label{5.14}
\|\phi_{n}\|_{L^{2}}^{2}
=\sum_{j=1}^{M}\|\psi^{j}\|_{L^{2}}^{2}+\|W_{n}^{M}\|_{L^{2}}^{2}+o_{n}(1),
\end{align}
\begin{align}\label{5.15}
\|H_{r_{n}}^{\frac{1}{2}}\phi_{n}\|_{L^{2}}^{2}
=\sum_{j=1}^{M}\|H_{r_{n}}^{\frac{1}{2}}\psi^{j}\|_{L^{2}}^{2}
+\|H_{r_{n}}^{\frac{1}{2}}W_{n}^{M}\|_{L^{2}}^{2}+o_{n}(1),
\end{align}
where $o_{n}(1)\rightarrow0$ as $n\rightarrow+\infty$.
\end{proposition}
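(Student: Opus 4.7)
The plan is to prove this profile decomposition by the standard Keraani-type iterative extraction, adapted so that the uniformity of Strichartz and smoothing estimates under the scaling $V \mapsto V_{r_n}$ (Remarks \ref{rem2.2} and \ref{remark scaling}) makes the argument work uniformly in the three regimes $r_n=1$, $r_n\to 0$, and $r_n\to\infty$. Since the radial assumption on $\phi_n$ and on $V$ breaks spatial translation symmetry in a controlled way, the only concentration modulation allowed in the decomposition is time translation $t_n^j$; this is the reason the statement is simpler than in the translation-invariant case.

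First, I would set up the extraction step. Let $A_0=\limsup_n \|e^{itH_{r_n}}\phi_n\|_{S(\dot H^{s_c})}$ be bounded (by Proposition \ref{lem2.1} applied to $H_{r_n}$, whose Strichartz constant is independent of $r_n$ by Remark \ref{rem2.2}). If $A_0=0$ we take $M=0$ and stop. Otherwise I would prove an inverse (refined) Strichartz inequality: there exist $t_n^1\in\mathbb R$ and $\psi^1\in H^2$ with $\|\psi^1\|_{\dot H^{s_c}}\gtrsim A_0$ such that, along a subsequence, $e^{it_n^1 H_{r_n}}\phi_n\rightharpoonup \psi^1$ weakly in $H^2$. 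The refined inequality is obtained by combining the Strichartz estimate with the local smoothing/Kato--Jensen bounds from Lemma \ref{lemFSY}, interpolating between $\dot H^{s_c}$ and the Strichartz norm, and then using radial compact embedding $H^2_{\rm rad}(\mathbb R^N)\hookrightarrow L^{q}_{\rm loc}$ to convert a lower bound on the Strichartz norm into concentration of $e^{it_n^1 H_{r_n}}\phi_n$ in some ball, which gives the weak limit.

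Then I would iterate. Set $W_n^1=\phi_n-e^{-it_n^1 H_{r_n}}\psi^1$ and repeat the extraction on $W_n^1$. The orthogonality \eqref{5.12} follows from the weak convergence construction in the standard way: if $|t_n^j-t_n^k|$ stayed bounded along some subsequence, one could pass to the weak limit and contradict the definition of $\psi^k$ as the weak limit of $e^{it_n^k H_{r_n}}W_n^{k-1}$. The asymptotic Pythagorean expansions \eqref{5.14}--\eqref{5.15} then drop out from orthogonality, using that $e^{itH_{r_n}}$ is an isometry on $L^2$ and on $\dot H^1_{H_{r_n}}:=\{u:\|H_{r_n}^{1/2}u\|_{L^2}<\infty\}$ (and that the cross terms vanish by the pairwise divergence of time shifts combined with the Kato--Jensen decay). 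Finally, the smallness \eqref{5.13} is obtained by the interpolation inequality
\[
\|e^{itH_{r_n}}W_n^M\|_{S(\dot H^{s_c})}\lesssim \|W_n^M\|_{\dot H^{s_c}}^{1-\theta}\,\|e^{itH_{r_n}}W_n^M\|_{S(\dot H^{s_c})}^{\theta},
\]
together with the bound $\sum_j\|\psi^j\|_{\dot H^{s_c}}^2\lesssim \limsup\|\phi_n\|_{\dot H^{s_c}}^2<\infty$ from \eqref{5.15}, which forces $\|\psi^j\|_{\dot H^{s_c}}\to 0$ and hence the extracted $A_{M}\to 0$ as $M\to\infty$.

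The main obstacle is to run the refined Strichartz/inverse step uniformly across the three regimes for $r_n$. When $r_n=1$ we work with the fixed operator $H$ and use local smoothing \eqref{2.4'} directly; when $r_n\to 0$ or $r_n\to\infty$ the potential $V_{r_n}$ is either concentrating or spreading and $H_{r_n}$ does not converge in norm to $\Delta^2$, but the crucial point is that all the constants in Proposition \ref{lem2.1}, Proposition \ref{lem2.3}, Proposition \ref{lem2.4} and in Lemma \ref{lemFSY} are scaling invariant (Remarks \ref{rem2.2} and \ref{remark scaling}), so the inverse Strichartz inequality carries over uniformly in $n$. The radial hypothesis on $\phi_n$ and on $V$ is essential twice: it gives the radial compact embedding needed to extract the weak limit, and it removes spatial-translation modulations which would otherwise be incompatible with the (radial but non translation-invariant) potentials $V_{r_n}$. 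Once this uniform extraction step is established, the rest of the proof is a routine iteration that terminates because of the Pythagorean identity \eqref{5.15}.
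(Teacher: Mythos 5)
Your plan is a genuinely different route from the paper's. The paper does not rerun the concentration--compactness machinery for the perturbed propagators at all: it quotes the free profile decomposition $\phi_n=\sum_j e^{-it_n^j H_0}\psi^j+W_n^M$ (Lemma 5.3 of \cite{Guo}) as a black box and then converts it perturbatively, absorbing the differences $e^{-it_n^j H_0}\psi^j-e^{-it_n^j H_{r_n}}\psi^j$ into a new remainder $\widetilde W_n^M$. The whole content of the paper's proof is showing these differences vanish: via Duhamel and the scale-invariant estimates of Propositions \ref{lem2.1} and \ref{lem2.4} one reduces to $\|V_{r_n}e^{itH_0}\psi^j\|\to 0$ (which uses $r_n\to 0$ or $\infty$), and in the case $r_n=1$ one instead invokes the linear wave operators of Proposition \ref{lem2.7} to replace $\psi^j$ by new profiles $\tilde\psi^j$ adapted to $H$. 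Your proposal, by contrast, attempts a direct Keraani-type extraction for the family $e^{itH_{r_n}}$.

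The direct route has genuine gaps as sketched. First, the extraction and orthogonality steps are carried out for a propagator that \emph{changes with} $n$: you need an inverse Strichartz inequality for the family $\{e^{itH_{r_n}}\}_n$ with a uniform profit, and your justification ("the constants are scale invariant") only gives uniform upper bounds, not the refined/inverse estimate, which for $\Delta^2+V$ is not available in the paper and is not a routine consequence of Lemma \ref{lemFSY}. Likewise, the claim that bounded $|t_n^j-t_n^k|$ leads to a contradiction "by passing to the weak limit" requires knowing the limit of $e^{i\tau H_{r_n}}$ as $n\to\infty$ for fixed $\tau$ (strong resolvent/propagator convergence to $e^{i\tau\Delta^2}$ when $r_n\to 0,\infty$); this is exactly the analytic content of the paper's estimates \eqref{5.24}, \eqref{5.29}, \eqref{5.33} and cannot be waved through. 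The same issue infects the vanishing of the cross terms in \eqref{5.14}--\eqref{5.15}. Second, the interpolation inequality you write for \eqref{5.13} has the norm $\|e^{itH_{r_n}}W_n^M\|_{S(\dot H^{s_c})}$ on both sides and is therefore vacuous; the standard argument requires a second, genuinely different (non-admissible or exotic) space--time norm on the right, and establishing the corresponding estimate for $e^{itH_{r_n}}$ uniformly in $n$ is again nontrivial. Finally, your treatment does not distinguish the case $r_n=1$ with $|t_n^j|\to\infty$, where the profiles must be modified through the linear scattering operators of Proposition \ref{lem2.7}; without this step the decomposition \eqref{5.11} with the propagator $e^{-it_n^jH}$ in place of $e^{-it_n^jH_0}$ does not follow. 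The most economical repair is to adopt the paper's strategy: start from the known $V=0$ decomposition and control the propagator differences.
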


\begin{proof}
 Let's first consider the case $r_{n}\rightarrow 0$ or $r_{n}\rightarrow\infty$.
According to Lemma 5.3 of the fist author \cite{Guo}, there exists a subsequence of $\phi_{n}$, which
is still denoted by itself, such that
\begin{align}\label{5.16}
\phi_{n}(x)=\sum_{j=1}^{M}e^{-it_{n}^{j}H_{0}}\psi^{j}(x)+W_{n}^{M}(x).
\end{align}
In order to get the form of \eqref{5.11}, we can rewrite \eqref{5.16} as
\begin{align}\label{5.17}
\phi_{n}(x)=\sum_{j=1}^{M}e^{-it_{n}^{j}H_{r_{n}}}\psi^{j}(x)+{\widetilde W}_{n}^{M}(x),
\end{align}
where
\begin{align}\label{5.18}
{\widetilde W}_{n}^{M}(x)=W_{n}^{M}(x)
+\sum_{j=1}^{M}\Big(e^{-it_{n}^{j}H_{0}}\psi^{j}(x)-e^{-it_{n}^{j}H_{r_{n}}}\psi^{j}(x)\Big).
\end{align}
Now we start verifying that \eqref{5.17} satisfies the properties \eqref{5.12}-\eqref{5.15}. It's obvious that \eqref{5.12} is true,
so let's look at \eqref{5.13}.
Applying the formula \eqref{2.28} to $e^{itH_{r_{n}}}W_{n}^{M}$ yields that
\begin{align}\label{5.19}
\|e^{itH_{r_{n}}}W_{n}^{M}\|_{S(\dot{H}^{s_{c}})}
&\leq \|e^{itH_{0}}W_{n}^{M}\|_{S(\dot{H}^{s_{c}})}
+\Big\|\displaystyle\int_{0}^{t}e^{i(t-s)H_{r_{n}}}(V_{r_{n}}e^{isH_{0}}W_{n}^{M})ds\Big\|_{S(\dot{H}^{s_{c}})}\nonumber\\
&\lesssim \|e^{itH_{0}}W_{n}^{M}\|_{S(\dot{H}^{s_{c}})}
+\|V_{r_{n}}e^{itH_{0}}W_{n}^{M}\|_{L_{t}^{\frac{4}{2-s_{c}}}L_{x}^{\frac{2N}{N+4}}}\nonumber\\
&\lesssim \|e^{itH_{0}}W_{n}^{M}\|_{S(\dot{H}^{s_{c}})}
+\|V_{r_{n}}\|_{L^{\frac{N}{4}}}\|e^{itH_{0}}W_{n}^{M}\|_{L_{t}^{\frac{4}{2-s_{c}}}L_{x}^{\frac{2N}{N-4}}}\nonumber\\
&=(1+\|V\|_{L^{\frac{N}{4}}})\|e^{itH_{0}}W_{n}^{M}\|_{S(\dot{H}^{s_{c}})}\rightarrow 0,
\end{align}
as $n\rightarrow\infty$ and $M\rightarrow\infty$.

Using the same argument to $e^{-it_{n}^{j}H_{0}}\psi^{j}(x)-e^{-it_{n}^{j}H_{r_{n}}}\psi^{j}(x)$, we obtain
\begin{align}\label{5.20}
&\|e^{itH_{r_{n}}}(e^{-it_{n}^{j}H_{0}}\psi^{j}-e^{-it_{n}^{j}H_{r_{n}}}\psi^{j})\|_{S(\dot{H}^{s_{c}})}\nonumber\\
&=\Big\|\displaystyle\int_{-t_{n}^{j}}^{0}
e^{i(t-t_{n}^{j}-s)H_{r_{n}}}(V_{r_{n}}e^{isH_{0}}\psi^{j})ds\Big\|_{S(\dot{H}^{s_{c}})}\nonumber\\
&\lesssim \|V_{r_{n}}e^{itH_{0}}\psi^{j}\|_{L_{t}^{\frac{4}{2-s_{c}}}L_{x}^{\frac{2N}{N+4}}}\rightarrow 0,
\end{align}
as $n\rightarrow\infty$, where the last step follows from
\begin{align}\label{5.21}
\|V_{r_{n}}e^{itH_{0}}\psi^{j}\|_{L_{t}^{\frac{4}{2-s_{c}}}L_{x}^{\frac{2N}{N+4}}}
\lesssim \|V_{r_{n}}\|_{L^{\frac{N}{4}}}\|e^{itH_{0}}\psi^{j}\|_{L_{t}^{\frac{4}{2-s_{c}}}L_{x}^{\frac{2N}{N-4}}}
\lesssim \|V\|_{L^{\frac{N}{4}}}\|\psi^{j}\|_{\dot{H}^{s_{c}}},
\end{align}
and the condition $r_{n}\rightarrow 0$ or $\infty$. Thus ${\widetilde W}_{n}^{M}(x)$ in \eqref{5.17}
satisfies the property \eqref{5.13}.

To get \eqref{5.14}, it suffices to prove
\begin{align}\label{5.22}
\|{\widetilde W}_{n}^{M}\|_{L^{2}}^{2}=\|W_{n}^{M}\|_{L^{2}}^{2}+o_{n}(1).
\end{align}
It follows from the expression of ${\widetilde W}_{n}^{M}(x)$ \eqref{5.18} that
\begin{align}\label{5.23}
&\|{\widetilde W}_{n}^{M}\|_{L^{2}}^{2}=\|W_{n}^{M}\|_{L^{2}}^{2}
+2\sum_{j=1}^{M}\langle W_{n}^{M}, e^{-it_{n}^{j}H_{0}}\psi^{j}-e^{-it_{n}^{j}H_{r_{n}}}\psi^{j}\rangle\nonumber\\
&+2\sum_{k\neq j}\langle e^{-it_{n}^{k}H_{0}}\psi^{j}-e^{-it_{n}^{k}H_{r_{n}}}\psi^{j},
e^{-it_{n}^{j}H_{0}}\psi^{j}-e^{-it_{n}^{j}H_{r_{n}}}\psi^{j}\rangle\nonumber\\
&+\sum_{j=1}^{M}\|e^{-it_{n}^{j}H_{0}}\psi^{j}-e^{-it_{n}^{j}H_{r_{n}}}\psi^{j}\|_{L^{2}}^{2},
\end{align}
from which, we only need to show that
\begin{align}\label{5.24}
\|e^{-it_{n}^{j}H_{0}}\psi^{j}-e^{-it_{n}^{j}H_{r_{n}}}\psi^{j}\|_{L^{2}}\rightarrow 0,
\end{align}
as $n\rightarrow \infty$.

In fact,
\begin{align}\label{5.25}
\|e^{-it_{n}^{j}H_{0}}\psi^{j}-e^{-it_{n}^{j}H_{r_{n}}}\psi^{j}\|_{L^{2}}
&=\Big\|\displaystyle\int_{-t_{n}^{j}}^{0}e^{-i(t_{n}^{j}+s)H_{r_{n}}}(V_{r_{n}}e^{isH_{0}}\psi^{j})ds\Big\|_{L^{2}}\nonumber\\
&\lesssim \|V_{r_{n}}e^{itH_{0}}\psi^{j}\|_{L_{t}^{2}L_{x}^{\frac{2N}{N+4}}}\rightarrow 0,
\end{align}
as $n\rightarrow \infty$. where the last step follows from
\begin{align}\label{5.26}
\|V_{r_{n}}e^{itH_{0}}\psi^{j}\|_{L_{t}^{2}L_{x}^{\frac{2N}{N+4}}}
\lesssim \|V_{r_{n}}\|_{L^{\frac{N}{4}}}\ \|e^{itH_{0}}\psi^{j}\|_{L_{t}^{2}L_{x}^{\frac{2N}{N-4}}}
\lesssim \|V\|_{L^{\frac{N}{4}}}\|\psi^{j}\|_{L^{2}},
\end{align}
and the condition $r_{n}\rightarrow 0$ or $\infty$. Thus, we complete the proof of \eqref{5.14}.

Now we turn to \eqref{5.15}. Since
\begin{align*}
\|H_{r_{n}}^{\frac{1}{2}}f_{n}\|_{L^{2}}^{2}=\|\Delta f_{n}\|_{L^{2}}^{2}+\langle V_{r_{n}}f_{n}, f_{n}\rangle
\end{align*}
and
$$
|\langle V_{r_{n}}f_{n}, f_{n}\rangle|\lesssim \|V_{r_{n}}\|_{L^{\frac{N}{4}}}\ \|f_{n}\|_{L^{\frac{2N}{N-4}}}^{2}
\lesssim \|V\|_{L^{\frac{N}{4}}}\ \|\Delta f_{n}\|_{L^{2}}^{2},
$$
we have
\begin{align}\label{5.27}
\|H_{r_{n}}^{\frac{1}{2}}f_{n}\|_{L^{2}}^{2}=\|\Delta f_{n}\|_{L^{2}}^{2}+o_{n}(1),
\end{align}
provided that $\|\Delta f_{n}\|_{L^{2}}$ is uniformly bounded. Hence, applying \eqref{5.27} with $\phi_{n}$, $\phi^{j}$
and ${\widetilde W}_{n}^{M}$ and using the asymptotic Pythagorean expansion associated with the free linear propagator Lemma 5.3
in \cite{Guo}, we find that \eqref{5.15} can be deduced from the following expression
\begin{align}\label{5.28}
\|\Delta {\widetilde W}_{n}^{M}\|_{L^{2}}^{2}=\|\Delta W_{n}^{M}\|_{L^{2}}^{2}+o_{n}(1).
\end{align}

As in the proof of \eqref{5.22}, it suffices to prove
\begin{align}\label{5.29}
\|\Delta (e^{-it_{n}^{j}H_{0}}\psi^{j}-e^{-it_{n}^{j}H_{r_{n}}}\psi^{j})\|_{L^{2}}\rightarrow 0,
\end{align}
as $n\rightarrow \infty$. Indeed, using Proposition \ref{lem2.4}, we have
\begin{align}\label{5.30}
\|\Delta (e^{-it_{n}^{j}H_{0}}\psi^{j}-e^{-it_{n}^{j}H_{r_{n}}}\psi^{j})\|_{L^{2}}
&= \Big\|H_{0}^{\frac{1}{2}}\displaystyle\int_{-t_{n}^{j}}^{0}e^{-i(t_{n}^{j}+s)H_{r_{n}}}(V_{r_{n}}e^{isH_{0}}\psi^{j})ds\Big\|_{L^{2}}\nonumber\\
&\lesssim \big\|\nabla (V_{r_{n}}e^{isH_{0}}\psi^{j})\big\|_{L_{t}^{2}L_{x}^{\frac{2N}{N+2}}}\rightarrow 0,
\end{align}
as $n\rightarrow \infty$, where the last step follows from
\begin{align}\label{5.31}
\big\|\nabla (V_{r_{n}}e^{isH_{0}}\psi^{j})\big\|_{L_{t}^{2}L_{x}^{\frac{2N}{N+2}}}
&\lesssim \big\||x||\nabla V_{r_{n}}|\big\|_{L^{\frac{N}{4}}}\ \big\||x|^{-1}e^{isH_{0}}\psi^{j}\big\|_{L_{t}^{2}L_{x}^{\frac{2N}{N-6}}}
+\big\| V_{r_{n}}|\big\|_{L^{\frac{N}{4}}}\ \big\|\nabla e^{isH_{0}}\psi^{j}\big\|_{L_{t}^{2}L_{x}^{\frac{2N}{N-6}}}\nonumber\\
&\lesssim  \Big(\big\||x||\nabla V|\big\|_{L^{\frac{N}{4}}}+\big\| V|\big\|_{L^{\frac{N}{4}}}\Big)
\ \big\|\Delta e^{isH_{0}}\psi^{j}\big\|_{L_{t}^{2}L_{x}^{\frac{2N}{N-4}}}\nonumber\\
&\lesssim  \Big(\big\||x||\nabla V|\big\|_{L^{\frac{N}{4}}}+\big\| V|\big\|_{L^{\frac{N}{4}}}\Big)\ \|\psi^j\|_{H^{2}}.
\end{align}

Now Let's consider the other case $r_{n}=1$. Using \eqref{5.16} again gives
\begin{align}\label{5.32}
\phi_{n}(x)=\sum_{j=1}^{M}e^{-it_{n}^{j}H_{0}}\psi^{j}(x)+W_{n}^{M}(x).
\end{align}
If $t_{n}^{j}\rightarrow\infty$, by Proposition \ref{lem2.7}, there exists $\tilde{\psi}^{j}\in H^{2}({\bf R}^{n})$ such that
$\|e^{-it_{n}^{j}H_{0}}\psi^{j}-e^{it_{n}^{j}H}\tilde{\psi}^{j}\|_{H^{2}}\rightarrow 0$. If, on the other hand,
$t_{n}^{j}=0$, we set $\tilde{\psi}^{j}=\psi^{j}$. To sum up, in either case, we obtain a new profile $\tilde{\psi}^{j}$
for the given $\psi^{j}$ such that
\begin{align}\label{5.33}
\|e^{-it_{n}^{j}H_{0}}\psi^{j}-e^{-it_{n}^{j}H}\tilde{\psi}^{j}\|_{H^{2}}\rightarrow 0,
\ {\rm as}\ \ n\rightarrow+\infty.
\end{align}
In order to get the form of \eqref{5.11}, we can rewrite \eqref{5.32} as
\begin{align}\label{5.34}
\phi_{n}(x)=\sum_{j=1}^{M}e^{-it_{n}^{j}H}\tilde{\psi}^{j}(x)+{\widetilde W}_{n}^{M}(x),
\end{align}
where
\begin{align}\label{5.35}
{\widetilde W}_{n}^{M}(x)=W_{n}^{M}(x)+\sum_{j=1}^{M}\Big(e^{-it_{n}^{j}H_{0}}\psi^{j}(x)-e^{-it_{n}^{j}H}\tilde{\psi}^{j}(x)\Big).
\end{align}
Here we only give the proof of \eqref{5.13}, since all the proofs of \eqref{5.13}-\eqref{5.15} can be obtained by following the same argument
in the case $r_{n}\rightarrow 0$ or $\infty$ and using \eqref{5.33}.  Indeed, \eqref{5.19} with $r_{n}=1$ is still valid, which yields
\begin{align}\label{5.36}
\lim_{M\rightarrow+\infty}\Big(\lim_{n\rightarrow+\infty}\|e^{itH}W_{n}^{M}\|_{S(\dot{H}^{s_{c}})}\Big)=0.
\end{align}
And using the Strichartz estimate \eqref{2.7}  and \eqref{5.33}, we have
\begin{align}\label{5.37}
\|e^{itH}(e^{-it_{n}^{j}H_{0}}\psi^{j}(x)-e^{-it_{n}^{j}H}\tilde{\psi}^{j}(x))\|_{S(\dot{H}^{s_{c}})}
&\lesssim \|e^{-it_{n}^{j}H_{0}}\psi^{j}(x)-e^{-it_{n}^{j}H}\tilde{\psi}^{j}(x)\|_{\dot{H}^{s}}\nonumber\\
&\lesssim \|e^{-it_{n}^{j}H_{0}}\psi^{j}(x)-e^{-it_{n}^{j}H}\tilde{\psi}^{j}(x)\|_{H^{2}}\rightarrow 0,
\end{align}
as $n\rightarrow\infty$. putting \eqref{5.36} and \eqref{5.37} together gives \eqref{5.13}, that is,
\begin{align}\label{5.38}
\lim_{M\rightarrow+\infty}\Big(\lim_{n\rightarrow+\infty}\|e^{itH}{\widetilde W}_{n}^{M}\|_{S(\dot{H}^{s_{c}})}\Big)=0.
\end{align}
\end{proof}

\begin{remark}\label{rem5.4}
 In the linear profile decomposition \eqref{5.11}, we still have the property, for any $j\geq 1$,
\begin{align}\label{5.39}
W_{n}^{j}-e^{-t_{n}^{j}H_{r_{n}}}\psi^{j+1}\rightharpoonup0 \ \ {\rm in} \ \ H^{2}({\bf R}^{N})
\end{align}
In fact, when $r_{n}\rightarrow 0$ or $r_{n}\rightarrow\infty$, by \eqref{5.18}, we have, for any $M\geq 1$,
\begin{align}\label{5.40}
{\widetilde W}_{n}^{M}(x)=W_{n}^{M}(x)+\sum_{j=1}^{M}\Big(e^{-it_{n}^{j}H_{0}}\psi^{j}(x)-e^{-it_{n}^{j}H_{r_{n}}}\psi^{j}(x)\Big).
\end{align}
It follows from \eqref{5.24} and \eqref{5.29} that, for any $j\geq 1$,
\begin{align}\label{5.41}
e^{-it_{n}^{j}H_{0}}\psi^{j}-e^{-it_{n}^{j}H_{r_{n}}}\psi^{j}\rightarrow 0 \ \ {\rm in} \ \ H^{2}({\bf R}^{N}),
\end{align}
which together with the known result $W_{n}^{M} -e^{-t_{n}^{j}H_{0}}\psi^{M+1}\rightharpoonup 0$ in $H^{2}({\bf R}^{N})$ implies that
\begin{align*}
{\widetilde W}_{n}^{M} -e^{-t_{n}^{j}H_{0}}\psi^{M+1}\rightharpoonup 0 \ \ {\rm in} \ \ H^{2}({\bf R}^{N}).
\end{align*}
Using \eqref{5.41} with $j=M+1$ again gives
\begin{align}\label{5.42}
{\widetilde W}_{n}^{M} -e^{-t_{n}^{j}H_{r_{n}}}\psi^{M+1}\rightharpoonup 0 \ \ {\rm in} \ \ H^{2}({\bf R}^{N}),
\end{align}
which is namely our desired result \eqref{5.39}.
On the other hand, when $r_{n}=1$, by \eqref{5.35}
\begin{align}\label{5.43}
{\widetilde W}_{n}^{M}(x)=W_{n}^{M}(x)+\sum_{j=1}^{M}\Big(e^{-it_{n}^{j}H_{0}}\psi^{j}(x)-e^{-it_{n}^{j}H}\tilde{\psi}^{j}(x)\Big).
\end{align}
By \eqref{5.33}, whenever $t_{n}^{j}=0$ or $t_{n}^{j}\rightarrow\infty$,
\begin{align}\label{5.44}
e^{-it_{n}^{j}H_{0}}\psi^{j}(x)-e^{-it_{n}^{j}H}\tilde{\psi}^{j}(x)\rightarrow 0 \ \ {\rm in} \ \ H^{2}({\bf R}^{N}).
\end{align}
Similarly, we have
\begin{align}\label{5.45}
{\widetilde W}_{n}^{j} -e^{-it_{n}^{j}H}\tilde{\psi}^{j+1}\rightharpoonup 0 \ \ {\rm in} \ \ H^{2}({\bf R}^{N}).
\end{align}
\end{remark}

Next, we shall use Lemma \ref{lem5.2}, Proposition \ref{pro5.3} and Remark \ref{rem5.4} to establish the energy pythagorean expansion.

\begin{lemma}\label{lem5.5} In the situation of Proposition \ref{pro5.3}, we have
\begin{align}\label{5.46}
E_{V_{r_{n}}}(\phi_{n})=\sum_{j=1}^{M}E_{V_{r_{n}}}(e^{-it_{n}^{j}H_{r_{n}}}\psi^{j})+E_{V_{r_{n}}}(W_{n}^{M})+o_n(1).
\end{align}
\end{lemma}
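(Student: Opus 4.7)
The plan is to split the energy into its quadratic and $L^{p+1}$ parts,
$$E_{V_{r_n}}(u) = \frac{1}{2}\|H_{r_n}^{1/2} u\|_{L^2}^2 - \frac{1}{p+1}\|u\|_{L^{p+1}}^{p+1},$$
and treat each piece separately. The quadratic piece will follow immediately from the $H_{r_n}^{1/2}$-Pythagorean identity \eqref{5.15} already proved in Proposition \ref{pro5.3}, together with the fact that $e^{-it_n^j H_{r_n}}$ commutes with $H_{r_n}$ and is $L^2$-unitary, so $\|H_{r_n}^{1/2}e^{-it_n^j H_{r_n}}\psi^j\|_{L^2}=\|H_{r_n}^{1/2}\psi^j\|_{L^2}$. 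The entire work of the proof therefore lies in showing the nonlinear Pythagoras
\begin{align}\label{Lp1pyth}
\|\phi_n\|_{L^{p+1}}^{p+1} = \sum_{j=1}^{M}\|e^{-it_n^j H_{r_n}}\psi^j\|_{L^{p+1}}^{p+1} + \|W_n^M\|_{L^{p+1}}^{p+1} + o_n(1).
\end{align}

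The structural input I plan to exploit is that the pairwise divergence \eqref{5.12} allows at most one time shift to stay bounded; after passing to a subsequence and relabelling, I may assume $t_n^{j_0}=0$ for at most one index $j_0$ and $|t_n^j|\to\infty$ for all remaining $j$. For each such far profile I will show $\|e^{-it_n^j H_{r_n}}\psi^j\|_{L^{p+1}}\to 0$. When $r_n=1$ this is a direct appeal to Lemma \ref{lem5.2}. When $r_n\to 0$ or $r_n\to\infty$ I will first invoke the $H^2$-approximation $\|e^{-it_n^j H_{r_n}}\psi^j - e^{-it_n^j H_0}\psi^j\|_{H^2}\to 0$ established inside the argument for \eqref{5.25} and \eqref{5.30}, promote it to $L^{p+1}$ via Gagliardo--Nirenberg, and then apply the standard dispersive decay $\|e^{-it_n^j H_0}\psi^j\|_{L^{p+1}}\to 0$ for the free biharmonic semigroup on $H^2$-data (a consequence of \eqref{freeL1Linfty} after Schwartz approximation of $\psi^j$ and interpolation with Sobolev). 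This simultaneously kills every cross product between two far profiles and reduces \eqref{Lp1pyth} to
$$\|\phi_n\|_{L^{p+1}}^{p+1} = \|\psi^{j_0}\|_{L^{p+1}}^{p+1} + \|W_n^M\|_{L^{p+1}}^{p+1} + o_n(1),$$
with the $\psi^{j_0}$ term absent when no bounded shift exists.

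For this reduced identity I plan to apply the pointwise inequality $\bigl||a+b|^{p+1}-|a|^{p+1}-|b|^{p+1}\bigr|\lesssim|a|^p|b|+|a||b|^p$ with $a=\psi^{j_0}$ and $b=W_n^M$, and to show $\int|\psi^{j_0}|^p|W_n^M|+\int|\psi^{j_0}||W_n^M|^p\to 0$. Remark \ref{rem5.4} supplies $W_n^M\rightharpoonup 0$ in $H^2$, and since all data are radial the compactness of the embedding $H^2_{\text{rad}}(\mathbf{R}^N)\hookrightarrow L^{p+1}(B_R)$ on any bounded ball yields $W_n^M\to 0$ in $L^{p+1}(B_R)$; an $\varepsilon$-truncation of $\psi^{j_0}$ to a compactly supported function, combined with the uniform $L^{p+1}$-boundedness of $W_n^M$ from Sobolev embedding and \eqref{5.15}, controls the tail and completes the cross-term estimate. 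The main obstacle I anticipate is Step~3's uniform-in-$r_n$ decay of the far profiles: the $H^2$-approximation to the free propagator is the delicate part and relies on conditions $(\mathfrak{C}_1)$--$(\mathfrak{C}_2)$ as well as on the Strichartz and smoothing constants being scale-invariant under $V\mapsto V_{r_n}$, a property explicitly recorded in Remark \ref{rem2.2} and Remark \ref{remark scaling} and without which the far-profile decay could degenerate as $r_n\to 0$ or $r_n\to\infty$.
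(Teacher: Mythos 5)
Your proposal is correct and follows the same overall strategy as the paper: reduce \eqref{5.46} to the $L^{p+1}$ Pythagorean expansion \eqref{5.47} using \eqref{5.14}--\eqref{5.15} and the $L^{2}$-unitarity of $e^{-it_{n}^{j}H_{r_{n}}}$, split into the case of one bounded time shift versus all shifts divergent, kill the far profiles by dispersive decay in $L^{p+1}$, and treat the near profile by weak convergence plus the compact radial embedding $H^{2}_{\mathrm{rad}}\hookrightarrow L^{p+1}$. Two of your steps differ in execution and are worth recording. First, for the decay of far profiles when $r_{n}\to 0$ or $r_{n}\to\infty$, the paper simply invokes Lemma \ref{lem5.2}, even though that lemma is stated for a single fixed operator $H$ and is being applied to the varying family $H_{r_{n}}$ along the sequence $n\to\infty$; your detour through the $H^{2}$-approximation $\|e^{-it_{n}^{j}H_{r_{n}}}\psi^{j}-e^{-it_{n}^{j}H_{0}}\psi^{j}\|_{H^{2}}\to 0$ (i.e.\ \eqref{5.24} and \eqref{5.29}) followed by the free decay $\|e^{-it_{n}^{j}H_{0}}\psi^{j}\|_{L^{p+1}}\to 0$ is more careful on exactly the point where uniformity in $r_{n}$ could fail, and it correctly identifies the scale invariance of Remarks \ref{rem2.2} and \ref{remark scaling} as the reason the approximation does not degenerate. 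Second, for the near profile the paper argues that $W_{n}^{j-1}\to\psi^{j}$ \emph{strongly} in $L^{p+1}$ and then reads off $\phi_{n}\to\psi^{j}$ and $W_{n}^{M}\to 0$ in $L^{p+1}$ from \eqref{5.48}--\eqref{5.49}, whereas you keep only the cross terms and estimate them with the pointwise inequality $\bigl||a+b|^{p+1}-|a|^{p+1}-|b|^{p+1}\bigr|\lesssim |a|^{p}|b|+|a||b|^{p}$; both rest on the same compactness input from Remark \ref{rem5.4}, and your version has the minor advantage of not needing $W_{n}^{M}\to 0$ strongly. One small caveat: your cross-term step uses $W_{n}^{M}\rightharpoonup 0$ in $H^{2}$, which holds precisely because in the relevant case the unique bounded shift occurs at some $j_{0}\le M$, so $|t_{n}^{M+1}|\to\infty$ and the profile appearing in Remark \ref{rem5.4} itself tends weakly to zero; you should make that one-line observation explicit.
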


\begin{proof}
 According to \eqref{5.14} and \eqref{5.15}, it suffices to establish for all $M\geq1$,
\begin{align}\label{5.47}
\big\|\phi_{n}\big\|_{p+1}^{p+1}=\sum_{j=1}^{M}\big\|e^{-it_{n}^{j}H_{r_{n}}}\psi^{j}\big\|_{p+1}^{p+1}+\big\|W_{n}^{M}\big\|_{p+1}^{p+1}+o_n(1).
\end{align}
In fact, there are only two cases to consider.

Case 1. There exists some $j$ for which $t_n^j$ converges to a finite number, which, without loss of generality,
we assume is 0. In this case we will show that $\lim_{n\rightarrow\infty}\|W_n^M\|_{p+1}=0$  for $M>j$,
 $\lim_{n\rightarrow\infty}\|e^{-it_{n}^{k}H_{r_{n}}}\psi^{k}\|_{p+1}=0$ for all $k\neq j$, and
 $\lim_{n\rightarrow\infty}\|\phi_n\|_{p+1}=\|\psi^{j}\|_{p+1}$, which gives \eqref{5.47}.

 Case 2. For all $j$, $|t_n^j|\rightarrow\infty$. In this case we will show that
 $\lim_{n\rightarrow\infty}\|e^{-it_{n}^{k}H_{r_{n}}}\psi^{k}\|_{p+1}=0$ for all $k$ and
 $\lim_{n\rightarrow\infty}\|\phi_n\|_{p+1}=\lim_{n\rightarrow\infty}\|W_n^M\|_{p+1}$, which  gives
\eqref{5.47} again.

 For Case 1: We infer from Remark \ref{rem5.4} that $W_n^{j-1}\rightharpoonup\psi^j$.
 By the compactness of the embedding $H^2_{rad}\hookrightarrow L^{p+1}$, it follows that
$W_n^{j-1}\rightarrow\psi^j$ strongly in $L^{p+1}$. Let $k\neq j$. Then we get from \eqref{5.12}
that $|t_n^k|\rightarrow\infty$. By Lemma \ref{lem5.2}, we obtain that
$\|e^{-it_{n}^{k}H_{r_{n}}}\psi^{k}\|_{p+1}\rightarrow0$. Recalling that
\begin{align}\label{5.48}
W_n^{j-1}=\phi_n-e^{-it_{n}^{1}H_{r_{n}}}\psi^{1}-\cdots -e^{-it_{n}^{j-1}H_{r_{n}}}\psi^{j-1},
\end{align}
we conclude that $\phi_n\rightarrow\psi^j$ strongly in $L^{p+1}$. Since
\begin{align}\label{5.49}
W_n^M=
W_n^{j-1}-\psi^j-e^{-it_{n}^{j+1}H_{r_{n}}}\psi^{j+1}-\cdots -e^{-it_{n}^{M}H_{r_{n}}}\psi^{M},
\end{align}
we also conclude that $\lim_{n\rightarrow\infty}\|W_n^M\|_{p+1}\rightarrow0$ strongly in $L^{p+1}$, for $M>j$.

For Case 2: Since
\begin{align}\label{5.50}
W_n^{M}=\phi_n-e^{-it_{n}^{1}H_{r_{n}}}\psi^{1}-\cdots -e^{-it_{n}^{M}H_{r_{n}}}\psi^{M},
\end{align}
and for all $j$, $|t_{n}^{j}|\rightarrow\infty$, which gives
$\lim_{n\rightarrow\infty}\|e^{-it_{n}^{j}H_{r_{n}}}\psi^{j}\|_{p+1}=0$,
we conclude that $\phi_{n}-W_{n}^{M}\rightarrow 0$ in $L^{p+1}$.
Hence, we have $\lim_{n\rightarrow\infty}\|\phi_n\|_{p+1}=\lim_{n\rightarrow\infty}\|W_n^M\|_{p+1}$.
\end{proof}

\begin{proposition}\label{pro5.6} Under the assumptions of Theorem \ref{th1.2}, then
 there exists a radial $u_{c,0}$ in $H^2({\bf R}^{N})$ with
\begin{align}\label{5.51}
M(u_{c,0})^{\frac{2-s_c}{s_c}}E(u_{c,0})= (M^{\frac{2-s_c}{s_c}}E)_c<M(Q)^{\frac{2-s_c}{s_c}}E_{0}(Q),
\end{align}
\begin{align}\label{5.52}
\| u_{c,0}\|^{\frac{2-s_c}{s_c}}_{2}\|H^{\frac{1}{2}}u_{c,0}\|_{2}
<\|Q\|^{\frac{2-s_c}{s_c}}_{2}\|\Delta Q\|_{2}
\end{align}
such that  the corresponding solution $u_c$  of \eqref{1.1} to
the initial data $u_{c,0}$  is global and $$\|u_c\|_{S(\dot{H}^{s_{c}})}=\infty.$$
\end{proposition}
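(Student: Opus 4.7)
By the definition of $(M^{(2-s_c)/s_c}E)_c$ in \eqref{5.4} together with our standing hypothesis \eqref{5.5}, I can pick a radial minimizing sequence $u_{n,0}\in H^2(\mathbf{R}^N)$ whose solutions $u_n$ of \eqref{1.1} satisfy
$M(u_{n,0})^{\frac{2-s_c}{s_c}}E(u_{n,0})\to (M^{\frac{2-s_c}{s_c}}E)_c$, $\|u_{n,0}\|_{L^2}^{\frac{2-s_c}{s_c}}\|H^{\frac12}u_{n,0}\|_{L^2}<\|Q\|_{L^2}^{\frac{2-s_c}{s_c}}\|\Delta Q\|_{L^2}$, yet $\|u_n\|_{S(\dot{H}^{s_c})}=\infty$. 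Lemma \ref{lem4.4} ensures a uniform $H^2$ bound. I then apply Proposition \ref{pro5.3} with $r_n\equiv 1$ to write $u_{n,0}=\sum_{j=1}^{M}e^{-it_n^j H}\psi^j+W_n^M$ and combine the mass/derivative Pythagorean expansions \eqref{5.14}-\eqref{5.15} with the energy expansion of Lemma \ref{lem5.5} (applied with $V_{r_n}=V$).

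\textbf{Nonlinear profiles and reduction.} To each $\psi^j$ attach a nonlinear profile $v^j\in C(\mathbf{R},H^2)$: when $t_n^j\equiv 0$ take $v^j$ to be the solution of \eqref{1.1} with data $\psi^j$, and when $|t_n^j|\to\infty$ invoke the wave operator Proposition \ref{pro4.5} to obtain a global $v^j$ with $\|v^j(-t_n^j)-e^{-it_n^j H}\psi^j\|_{H^2}\to 0$. The Pythagorean expansions together with Theorem \ref{th4.1} and Remark \ref{rem4.2} force every $v^j$ to satisfy the subcritical hypotheses \eqref{4.1}-\eqref{4.2}, and to obey $\sum_j M(v^j)^{(2-s_c)/s_c}E(v^j)\le (M^{(2-s_c)/s_c}E)_c+o(1)$; if two or more profiles are nontrivial, this yields strictly $M(v^j)^{(2-s_c)/s_c}E(v^j)<(M^{(2-s_c)/s_c}E)_c$ for each $j$, so each $v^j\in SC$ with $\|v^j\|_{S(\dot{H}^{s_c})}<\infty$. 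Assembling the approximate solution $\widetilde{u}_n(t):=\sum_{j=1}^{M}v^j(t-t_n^j)+e^{itH}W_n^M$, the pairwise divergence \eqref{5.12}, the smallness \eqref{5.13}, Lemma \ref{lem5.2} and the Strichartz and smoothing estimates of Propositions \ref{lem2.1} and \ref{lem2.4} show that $\widetilde{u}_n$ solves \eqref{1.1} up to an error small in $S'(\dot{H}^{-s_c})$. The perturbation Lemma \ref{lem2.11} then produces $\|u_n\|_{S(\dot{H}^{s_c})}<\infty$ for large $n$, contradicting the choice of the sequence.

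\textbf{Extraction of the critical element and obstacle.} Consequently exactly one profile $\psi^1$ is nontrivial and $W_n^1\to 0$ strongly in $H^2(\mathbf{R}^N)$, the strong convergence being forced by the saturation of \eqref{5.14}-\eqref{5.15} together with \eqref{5.13}. If $|t_n^1|\to\infty$ the wave-operator-plus-perturbation argument applied to the single profile would again give $\|u_n\|_{S(\dot{H}^{s_c})}<\infty$, which is ruled out; hence $t_n^1\equiv 0$ along a subsequence and $u_{c,0}:=\psi^1$ fulfils \eqref{5.51}-\eqref{5.52} by continuity of $M$ and $E$. Theorem \ref{th4.1} then makes the corresponding solution $u_c$ global, and $\|u_c\|_{S(\dot{H}^{s_c})}=\infty$ (otherwise, for $n$ large, Lemma \ref{lem2.11} would again yield $\|u_n\|_{S(\dot{H}^{s_c})}<\infty$). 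The chief obstacle throughout will be the asymptotic orthogonality of the nonlinear profiles in $S(\dot{H}^{s_c})$: because $V$ destroys spatial translation and Galilean invariance, the only tool for killing cross terms in $|\widetilde{u}_n|^{p-1}\widetilde{u}_n-\sum_j|v^j(\cdot-t_n^j)|^{p-1}v^j(\cdot-t_n^j)$ is the time decay from Lemma \ref{lem5.2} coupled with the Jensen-Kato local decay embedded in Propositions \ref{lem2.1}, \ref{lem2.3}, \ref{lem2.4}; the scaling invariance $C(V_r)=C(V)$ noted in Remarks \ref{rem2.2} and \ref{remark scaling} is what lets these estimates be used uniformly along the sequence.
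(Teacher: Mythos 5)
Your second half (profile decomposition with $r_n\equiv 1$, nonlinear profiles via Proposition \ref{pro4.5}, the perturbation Lemma \ref{lem2.11}, and the extraction of a single profile with $t_n^1\equiv 0$) is the standard Kenig--Merle extraction and matches what the paper does (it refers this part to Proposition 5.5 of \cite{Guo}). But your very first step contains a genuine gap: you claim that ``Lemma \ref{lem4.4} ensures a uniform $H^2$ bound'' for the minimizing sequence $u_{n,0}$. Lemma \ref{lem4.4} only gives the comparability $E(u)\sim\|H^{1/2}u\|_{L^2}^2$; along the minimizing sequence the only quantity under control is the \emph{product} $M(u_{n,0})^{\frac{2-s_c}{s_c}}E(u_{n,0})$, together with the constraint $\|u_{n,0}\|_{L^2}^{\frac{2-s_c}{s_c}}\|H^{1/2}u_{n,0}\|_{L^2}<\|Q\|_{L^2}^{\frac{2-s_c}{s_c}}\|\Delta Q\|_{L^2}$. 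Since $s_c\in(0,2)$, the mass $\|u_{n,0}\|_{L^2}$ may a priori tend to $0$ or $\infty$, and then $\|u_{n,0}\|_{H^2}^2\le \|u_{n,0}\|_{L^2}^2+\|H^{1/2}u_{n,0}\|_{L^2}^2< r_n^{-2s_c}+C\,r_n^{4-2s_c}$ with $r_n:=\|u_{n,0}\|_{L^2}^{-1/s_c}$ is unbounded in either regime. Because the potential destroys the scaling invariance, you cannot simply renormalize the data, and without the uniform bound Proposition \ref{pro5.3} cannot even be invoked.

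The paper devotes the bulk of its proof precisely to closing this gap: it shows that, after passing to a subsequence, $r_n\sim 1$. The argument is by contradiction: if $r_n\to 0$ or $\infty$, rescale to $\tilde u_n={\rm BNLS}_{V_{r_n}}\tilde u_{n,0}$ (which \emph{is} uniformly bounded in $H^2$), apply Proposition \ref{pro5.3} with these $r_n$, build nonlinear profiles from the \emph{free} flow ${\rm BNLS}_0$ (bounded in $S(\dot H^{s_c})$ by the $V=0$ result, Theorem \ref{th1.1}), and absorb both the cross terms and the extra error $V_{r_n}v_n$ (which vanishes in $S'(\dot H^{-s_c})$ as $r_n\to 0$ or $\infty$) via Lemma \ref{lem2.11}; the scale invariance $\|u_n\|_{S(\dot H^{s_c})}=\|\tilde u_n\|_{S(\dot H^{s_c})}$ then contradicts $\|u_n\|_{S(\dot H^{s_c})}=\infty$. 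This is the step where the scaling-uniformity of the constants noted in Remarks \ref{rem2.2} and \ref{remark scaling} is actually needed. Your proposal, which goes straight to the $r_n\equiv 1$ decomposition, omits this entire mechanism and therefore does not establish the proposition as stated.
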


\begin{proof}
 By the assumption \eqref{5.5} and the definition of $(M^{\frac{2-s_c}{s_c}}E)_c$, we can
find a sequence of solutions $u_{n}(t)={\rm BNLS_{V}}u_{n, 0}$ of \eqref{1.1} with initial data $u_{n,0}$ such that
\begin{align}\label{5.53}
M(u_{n,0})^{\frac{2-s_c}{s_c}}E(u_{n,0})\downarrow (M^{\frac{2-s_c}{s_c}}E)_c,
\end{align}
\begin{align}\label{5.54}
\| u_{n,0}\|^{\frac{2-s_c}{s_c}}_{L^{2}}\|H^{\frac{1}{2}}u_{n,0}\|_{L^{2}}
<\|Q\|^{\frac{2-s_c}{s_c}}_{L^{2}}\|\Delta Q\|_{L^{2}}
\end{align}
and
\begin{align}\label{5.55}
\|u_n\|_{S(\dot{H}^{s_{c}})}=\infty.
\end{align}
Note that it's not obvious for uniform boundedness of $\|u_{n, 0}\|_{H^{2}}$ because of
shortness of scaling invariance for the equation \eqref{1.1}. Hence, the first step is to show
that $\|u_{n, 0}\|_{H^{2}}$ is uniformly bounded, which can be obtained from the fact that
passing to a subsequence,
\begin{align}\label{5.56}
r_{n}=\|u_{n,0}\|_{L^{2}}^{-\frac{1}{s_{c}}}\sim 1.
\end{align}
Indeed, by $V\geq 0$, we have
\begin{align}\label{5.57}
\|u_{n, 0}\|_{H^{2}}^{2}&=\|u_{n, 0}\|_{L^{2}}^{2}+\|\Delta u_{n, 0}\|_{L^{2}}^{2}\nonumber\\
&\leq  \|u_{n, 0}\|_{L^{2}}^{2}+\|H^{\frac{1}{2}} u_{n, 0}\|_{L^{2}}^{2}\nonumber\\
&<r_{n}^{-2s_{c}}+\|Q\|_{L^{2}}^{\frac{4-2s_{c}}{s_{c}}}\|\Delta Q\|_{L^{2}}^{2}r_{n}^{4-2s_{c}}.
\end{align}
Let \eqref{5.56} be false, then we may assume that $r_{n}\rightarrow 0$ or $\infty$. Next, we shall apply
the linear profile decomposition and the perturbation lemma to get a contradiction. To this end,
we define
$$
\tilde{u}_{n}(x, t)=\frac{1}{r_{n}^{\frac{4}{p-1}}}u_{n}\Big(\frac{x}{r_{n}}, \frac{t}{r_{n}^{2}}\Big),
$$
and
$$
\tilde{u}_{n, 0}(x)=\frac{1}{r_{n}^{\frac{4}{p-1}}}u_{n, 0}\Big(\frac{x}{r_{n}}\Big).
$$
Hence, $\tilde{u}_{n}={\rm BNLS}_{V_{r_{n}}}\tilde{u}_{n, 0}$, that is, $\tilde{u}_{n}$ is the
solution to the initial value problem
\begin{equation}\label{5.58}
\left\{ \begin{aligned}
  i\partial_{t}\tilde{u}_{n}+H_{r_{n}}\tilde{u}_{n}-|\tilde{u}_{n}|^{p-1}\tilde{u}_{n}=0,\\
  \tilde{u}_{n}(0)=\tilde{u}_{n, 0},
                          \end{aligned}\right.
                          \end{equation}
and $\|\tilde{u}_{n, 0}\|_{H^{2}}$ is uniformly bounded, which follows from
$$
\|\tilde{u}_{n, 0}\|_{L^{2}}^{2}=r_{n}^{2s_{c}}\|u_{n, 0}\|_{L^{2}}^{2}=1
$$
and
\begin{align*}
\|\Delta\tilde{u}_{n, 0}\|_{L^{2}}^{2}
&\leq\|H_{r_{n}}^{\frac{1}{2}}\tilde{u}_{n, 0}\|_{L^{2}}^{2}
=r_{n}^{2s_{c}-4}\|H^{\frac{1}{2}}u_{n, 0}\|_{L^{2}}^{2}\nonumber\\
&= \|u_{n,0}\|_{L^{2}}^{\frac{4-2s_{c}}{s_{c}}}\|H^{\frac{1}{2}}u_{n, 0}\|_{L^{2}}^{2}
<\|Q\|_{L^{2}}^{\frac{4-2s_{c}}{s_{c}}}\|\Delta Q\|_{L^{2}}^{2}.\nonumber
\end{align*}
Therefore, we apply Proposition \ref{pro5.3} to $\tilde{u}_{n, 0}$ to get
\begin{align}\label{5.59}
\tilde{u}_{n, 0}(x)=\sum_{j=1}^{M}e^{-it_{n}^{j}H_{r_{n}}}\psi^{j}(x)+W_{n}^{M}(x).
\end{align}
Then by \eqref{5.46}, we have further
\begin{align}\label{5.60}
\sum_{j=1}^{M}\lim_{n\rightarrow\infty}E_{V_{r_{n}}}(e^{-it_{n}^{j}H_{r_{n}}}\psi^{j})
+\lim_{n\rightarrow\infty}E_{V_{r_{n}}}(W_{n}^{M})
=\lim_{n\rightarrow\infty}E_{V_{r_{n}}}(\tilde{u}_{n, 0}).
\end{align}
Since also by the profile expansion, we have
\begin{align}\label{5.61}
1=\|\tilde{u}_{n, 0}\|_{L^{2}}^{2}
=\sum_{j=1}^{M}\|\psi^{j}\|_{L^{2}}^{2}+\|W_{n}^{M}\|_{L^{2}}^{2}+o_{n}(1),
\end{align}
\begin{align}\label{5.62}
\|H_{r_{n}}^{\frac{1}{2}}\tilde{u}_{n, 0}\|_{L^{2}}^{2}
=\sum_{j=1}^{M}\|H_{r_{n}}^{\frac{1}{2}}e^{-it_{n}^{j}H_{r_{n}}}\psi^{j}\|_{L^{2}}^{2}
+\|H_{r_{n}}^{\frac{1}{2}}e^{-it_{n}^{j}H_{r_{n}}}W_{n}^{M}\|_{L^{2}}^{2}+o_{n}(1),
\end{align}
Since from the proof of Lemma \ref{lem4.4}, each energy in nonnegative and then
\begin{align}\label{5.63}
\lim_{n\rightarrow\infty}E_{V_{r_{n}}}(e^{-it_{n}^{j}H_{r_{n}}}\psi^{j})
&\leq
\lim_{n\rightarrow\infty}E_{V_{r_{n}}}(\tilde{u}_{n, 0})
=\lim_{n\rightarrow\infty}M(u_{n,0})^{\frac{2-s_c}{s_c}}E(u_{n,0})\nonumber\\
&=(M^{\frac{2-s_c}{s_c}}E)_c<M(Q)^{\frac{2-s_c}{s_c}}E_{0}(Q).
\end{align}
For a given $j$, if $|t_n^j|\rightarrow+\infty$, we may assume $t_n^j\rightarrow+\infty$
or $t_n^j\rightarrow-\infty$  up to a subsequence. In this case, by \eqref{5.61} and \eqref{5.63}
with $V=0$, we have
\begin{align}\label{5.64}
\frac{1}{2}\| \psi^{j}\|_{L^{2}}^{\frac{2-s_c}{s_c}}\|\Delta \psi^{j}\|_{L^{2}}
<M(Q)^{\frac{2-s_c}{s_c}}E_{0}(Q).
\end{align}
If we denote by ${\rm BNLS}_{0}(t)\phi$   a solution of \eqref{1.1}  with $V=0$ and initial data $\phi$,
then we get  from the existence of wave operators ( Proposition \ref{pro4.5} with $V=0$ or Proposition 4.4 in \cite{Guo} )that
there exists $\tilde{\psi}^{j}$ such that
\begin{align}\label{5.65}
\big\|{\rm BNLS}_{0}(-t_{n}^{j})\tilde{\psi}^{j}-e^{-it_{n}^{j}H_{0}}\psi^{j}\big\|_{H^2}
\rightarrow0,\ \ as\ \ n\rightarrow+\infty.
\end{align}
If, on the other hand, $t_{n}^{j}=0$, we set $\tilde{\psi}^{j}=\psi^{j}$.
To sum up, in either case, we obtain a $\tilde{\psi}^{j}$ for the given $\psi^{j}$ such that \eqref{5.65}.

In order to use the perturbation theory to get a contradiction, we set
$v^{j}(t)={\rm BNLS}_{0}(t)\tilde{\psi}^{j}$,
$v_{n}(t)=\sum_{j=1}^{M}v^j(t-t_{n}^{j})$,
and $\tilde{v}_{n}(t)={\rm BNLS}_{0}v_{n}(0)$. We will prove successively the following three claims
to get a contradiction.

~{\it Claim 1.} There exists a large constant $A_{0}$ independent of $M$ such
that there exists $n_0=n_0(M)$ such that for $n\geq n_0$,
\begin{align}\label{5.66}
\|\tilde{v}_n\|_{S(\dot{H}^{s_{c}})}\leq A_{0}.
\end{align}
Indeed, using \eqref{5.12} and \eqref{5.65}, we have that
\begin{align}\label{5.67}
E_{0}(v_{n}(0))=\sum_{j=1}^{M}E_{0}(v^{j}(-t^{j}))+o_{n}(1)
=\sum_{j=1}^{M}E_{0}(e^{-it_{n}^{j}H_{0}}\psi^{j})+o_{n}(1)
\end{align}
By \eqref{5.24}, \eqref{5.29},
the assumption $r_{n}\rightarrow 0$ or $\infty$ and Lemma \ref{lem5.5} , we have
\begin{align}\label{5.68}
&\quad \sum_{j=1}^{M}E_{0}(e^{-it_{n}^{j}H_{0}}\psi^{j})
=\sum_{j=1}^{M}E_{V_{r_{n}}}(e^{-it_{n}^{j}H_{r_{n}}}\psi^{j})+o_{n}(1)\nonumber\\
&\leq E_{V_{r_{n}}}(\tilde{u}_{n, 0})+o_{n}(1)=r_{n}^{2s_{c}-4}E(u_{n, 0})+o_{n}(1)
\end{align}
Collecting \eqref{5.67} and \eqref{5.68} gives
\begin{align}\label{5.69}
E_{0}(v_{n}(0))\leq r_{n}^{2s_{c}-4}E(u_{n, 0})+o_{n}(1)
\end{align}
Similarly, we have
\begin{align}\label{5.70}
M(v_{n}(0))\leq M(\tilde{u}_{n, 0})+o_{n}(1)=r_{n}^{2s_{c}}M(u_{n, 0})+o_{n}(1)
\end{align}
and
\begin{align}\label{5.71}
\|\Delta v_{n}(0)\|_{L^{2}}\leq \|H_{r_{n}}^{\frac{1}{2}}\tilde{u}_{n, 0}\|_{L^{2}}
=r_{n}^{s_{c}-2}\|H^{\frac{1}{2}}u_{n,0}\|_{L^{2}}
\end{align}
Hence, \eqref{5.69}-\eqref{5.71} imply for large $n$,
\begin{align*}
M(v_{n}(0))^{\frac{2-s_{c}}{s_{c}}}E_{0}(v_{n}(0))
&\leq
M(u_{n, 0})^{\frac{2-s_{c}}{s_{c}}}E(u_{n, 0})+o_{n}(1)\nonumber\\
&=(M^{\frac{2-s_c}{s_c}}E)_c+o_{n}(1)
<M(Q)^{\frac{2-s_c}{s_c}}E_{0}(Q)
\end{align*}
and
\begin{align}\label{5.72}
\|v_{n}(0)\|_{L^{2}}^{\frac{2-s_{c}}{s_{c}}}\|\Delta v_{n}(0)\|_{L^{2}}
\leq \|u_{n, 0}\|_{L^{2}}^{\frac{2-s_{c}}{s_{c}}}\|H^{\frac{1}{2}}u_{n,0}\|_{L^{2}}
+o_{n}(1)<\|Q\|^{\frac{2-s_c}{s_c}}_{2}\|\Delta Q\|_{2}
\end{align}
Hence, it follows from Theorem \ref{th1.1} that \eqref{5.66} is true.

~{\it Claim 2.}  There exists a large constant $A_{1}$ independent of $M$ such
that there exists $n_1=n_1(M)$ such that for $n\geq n_1$,
\begin{align}\label{5.73}
\|v_n\|_{S(\dot{H}^{s_{c}})}\leq A_{1}.
\end{align}
In fact, we note that
\begin{align}\label{5.74}
i\partial_t v_n+\Delta^2 v_n-|v_n|^{p-1}v_n=e_n,
\end{align}
where
\begin{align}\label{5.75}
e_n=\sum_{j=1}^{M}|v^j(t-t_{n}^{j})|^{p-1}v^j(t-t_{n}^{j})-|\sum_{j=1}^{M}v^j(t-t_{n}^{j})|^{p-1}\sum_{j=1}^{M}v^j(t-t_{n}^{j}).
\end{align}
If $p-1>1$, we estimate
\begin{align}\label{5.76}
|e_n|\leq c\sum\sum_{k\neq j}^{M}|v^j(t-t_{n}^{j})|
|v^k(t-t_{n}^{k})|\ \Big(|v^j(t-t_{n}^{j})|^{p-2}+|v^k(t-t_{n}^{k})|^{p-2}\Big);
\end{align}
while if $p-1<1$,
\begin{align}\label{5.77}
|e_n|\leq c\sum\sum_{k\neq j}^{M}|v^j(t-t_{n}^{j})|
|v^k(t-t_{n}^{k})|^{p-1}.
\end{align}
Since, for $j\neq k$, $|t_{n}^{j}-
t_{n}^{k}|\rightarrow+\infty$, then
we obtain that $\|e_n\|_{S'(\dot{H}^{-s_{c}})}$ goes to zero as $n\rightarrow\infty$, which,
combined  with \eqref{5.66} and Lemma \ref{lem2.11} with $V=0$, gives \eqref{5.73}.

~{\it Claim 3.}  There exists a large constant $A_{2}$ independent of $M$ such
that there exists $n_2=n_2(M)$ such that for $n\geq n_2$,
\begin{align}\label{5.78}
\|\tilde{u}_n\|_{S(\dot{H}^{s_{c}})}\leq A_{2}.
\end{align}
To see this, we note that
\begin{align}\label{5.79}
i\partial_t v_n+H_{r_{n}} v_n-|v_n|^{p-1}v_n={\tilde e}_n,
\end{align}
where
\begin{align}\label{5.80}
{\tilde e}_n=V_{r_{n}}v_{n}+e_{n}.
\end{align}
We will use the perturbation theory to get \eqref{5.78}. To this end, we will control two norms, that is,
\begin{align}\label{5.81}
\|e^{itH_{r_{n}}}(\tilde{u}_{n, 0}-v_{n}(0))\|_{S(\dot{H}^{s_{c}})}\ \ {\rm and  }\ \
\|{\tilde e}_n\|_{S'(\dot{H}^{-s_{c}})}.
\end{align}
From \eqref{5.59} and the definition of $v_{n}(t)$, we have
\begin{align}\label{5.82}
\tilde{u}_{n, 0}-v_{n}(0)=W_{n}^{M}+\sum_{j=1}^{M}\Big(e^{-it_{n}^{j}H_{r_{n}}}\psi^{j}-v^{j}(-t_{n}^{j})\Big).
\end{align}
 Let $\epsilon_0=\epsilon_0(A_{2},n,p)$ be
a small number given in Lemma \ref{lem2.11}. By \eqref{5.13}, takeing $M$ large enough such that
there exists $n_{3}=n_{3}(M)$ satisfying
\begin{align}\label{5.83}
\|e^{itH_{r_{n}}}W_{n}^{M}\|_{S(\dot{H}^{s_{c}})}< \frac{\epsilon_{0}}{2}
\end{align}
for all $n\geq n_{3}$. Next we turn to the estimate of
\begin{align}\label{5.84}
\big\|e^{itH_{r_{n}}}(e^{-it_{n}^{j}H_{r_{n}}}\psi^{j}-v^{j}(-t_{n}^{j}))\big\|_{S(\dot{H}^{s_{c}})}
\end{align}
for each $j$.
From the triangle inequality, Strichartz estimates, \eqref{5.41} and \eqref{5.65}, it follows that
there exists $n_{4}=n_{4}(M)$ such that for each $j$ and $n\geq n_{4}$
\begin{align}\label{5.85}
\|e^{itH_{r_{n}}}(e^{-it_{n}^{j}H_{r_{n}}}\psi^{j}-v^{j}(-t_{n}^{j}))\|_{S(\dot{H}^{s_{c}})}< \frac{\epsilon_{0}}{2M}.
\end{align}
~From \eqref{5.83} and \eqref{5.85}, it follows that
\begin{align}\label{5.86}
\|e^{itH_{r_{n}}}(\tilde{u}_{n, 0}-v_{n}(0))\|_{S(\dot{H}^{s_{c}})}<\epsilon_{0}
\end{align}
for all $n\geq\max\{ n_{3}, n_{4}\}$.

Similar to the proof of \eqref{5.20} and using \eqref{5.73}, we have that
$\|V_{r_{n}}v_{n}\|_{S'(\dot{H}^{-s_{c}})}$ goes to zero as $n\rightarrow\infty$,
which together with $\lim_{n\rightarrow\infty}\|e_n\|_{S'(\dot{H}^{-s_{c}})}=0$ gives
\begin{align}\label{5.87}
\lim_{n\rightarrow\infty}\|\tilde{e}_n\|_{S'(\dot{H}^{-s_{c}})}=0.
\end{align}
Applying Lemma \ref{lem2.11} with \eqref{5.86}, \eqref{5.87} and \eqref{5.73}, we get \eqref{5.78}.

By scaling, we have
\begin{align}\label{5.88}
\|u_n\|_{S(\dot{H}^{s_{c}})}=\|\tilde{u}_n\|_{S(\dot{H}^{s_{c}})}\leq A_{2},
\end{align}
 contradicting \eqref{5.55}. So $\|u_{n, 0}\|_{H^{2}}$ is uniformly bounded.

The next step is to extract $u_{c,0}$ from a bounded sequence $\{u_{n, 0}\}_{n=1}^{+\infty}$.
We omit the proof because it is similar to the proof of Proposition 5.5 in \cite{Guo}. Indeed, it
suffices to replace $e^{-itH_{0}}$ by $e^{-itH}$ in the proof.
\end{proof}

Once we established Proposition \ref{pro5.6}, we can obtain the following results of precompactness and
uniform localization of the minimal blow-up solution, the proof of which is standard and we omit
here.

\begin{proposition}\label{pro5.7}
 Let $u_c$ be
as  in Proposition \ref{pro5.6}. Then
$$
K=\Big\{u_c(t)| ~t\in{\bf R}\Big\}\subset H^2({\bf R}^{N})
$$
is precompact in $H^2({\bf R}^{N})$.
\end{proposition}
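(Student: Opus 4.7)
\textbf{Plan for the proof of Proposition \ref{pro5.7}.} To show that $K=\{u_c(t):t\in\mathbf{R}\}$ is precompact in $H^2(\mathbf{R}^N)$, the plan is to fix an arbitrary sequence $\{t_n\}\subset\mathbf{R}$ and extract a subsequence along which $u_c(t_n)$ converges strongly in $H^2$. If $\{t_n\}$ is bounded, continuity of the flow $u_c\in C(\mathbf{R},H^2)$ (from Lemma \ref{lem2.8} and the conservation of mass and energy, together with Lemma \ref{lem4.4}) immediately yields a convergent subsequence. Hence one may assume $|t_n|\to\infty$, and by time reversal symmetry we will treat $t_n\to+\infty$. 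Without loss of generality, by Lemma \ref{lem4.4} and the sub-threshold bound of Theorem \ref{th4.1}, the sequence $\phi_n:=u_c(t_n)$ is radial, uniformly bounded in $H^2$, and we may moreover assume $\|u_c\|_{S(\dot{H}^{s_c};[0,\infty))}=+\infty$.

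\textbf{Profile decomposition and nonlinear profiles.} Apply Proposition \ref{pro5.3} with $r_n=1$ to obtain
\begin{align*}
\phi_n(x)=\sum_{j=1}^{M}e^{-it_n^{j}H}\tilde\psi^{j}(x)+W_n^M(x),
\end{align*}
with pairwise divergent $t_n^j$, Pythagorean identities \eqref{5.14}, \eqref{5.15}, and energy expansion Lemma \ref{lem5.5} at our disposal. For each $j$, define the nonlinear profile $v^j\in C(\mathbf{R},H^2)$ solving \eqref{1.1} as follows: if $t_n^j\equiv 0$, take $v^j(0)=\tilde\psi^j$; if $|t_n^j|\to\infty$, use the wave operator Proposition \ref{pro4.5} to obtain $v^j$ with $\|v^j(t)-e^{itH}\tilde\psi^j\|_{H^2}\to 0$ as $t\to\pm\infty$ according to the sign of $t_n^j$. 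The Pythagorean expansions imply that each nonlinear profile obeys the sub-threshold hypotheses \eqref{5.6}--\eqref{5.7}, with equality holding only when a single profile is nontrivial and the remainder vanishes.

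\textbf{Single-profile reduction.} The key step is to prove that exactly one $\tilde\psi^j$ is nontrivial and $W_n^M\to 0$ in $H^2$. Assume for contradiction that at least two profiles are nontrivial. Then by the definition \eqref{5.4} of $(M^{(2-s_c)/s_c}E)_c$ and the strict sub-threshold inequalities above, each $v^j$ satisfies $\|v^j\|_{S(\dot{H}^{s_c})}<\infty$. Following the three-claim perturbation scheme of the proof of Proposition \ref{pro5.6} (using Lemma \ref{lem2.11}, Strichartz Proposition \ref{lem2.1}, the asymptotic orthogonality of the time shifts $|t_n^j-t_n^k|\to\infty$, and the decay of the remainder \eqref{5.13}), the approximate solution $\sum_j v^j(t-t_n^j)$ forces the bound $\|u_c\|_{S(\dot{H}^{s_c};[t_n,\infty))}\le A<\infty$ for $n$ large. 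Since $\|u_c\|_{S(\dot{H}^{s_c};[0,t_n])}<\infty$ by local theory and boundedness of $u_c$ in $H^2$, this contradicts $\|u_c\|_{S(\dot{H}^{s_c};[0,\infty))}=\infty$. Therefore $M=1$ suffices, and the Pythagorean identities together with Lemma \ref{lem5.5} force $\|W_n^M\|_{H^2}\to 0$.

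\textbf{Excluding $|t_n^1|\to\infty$ and conclusion.} Suppose $t_n^1\to-\infty$; the case $t_n^1\to+\infty$ is analogous with a backward argument. Then $v^1$ is the forward wave-operator profile, and since $\|e^{i\tau H}\tilde\psi^1\|_{S(\dot{H}^{s_c})}<\infty$ by Strichartz,
\begin{align*}
\|v^1(\cdot-t_n^1)\|_{S(\dot{H}^{s_c};[0,\infty))}=\|v^1\|_{S(\dot{H}^{s_c};[-t_n^1,\infty))}\longrightarrow 0
\end{align*}
as $n\to\infty$. Applying Lemma \ref{lem2.11} with approximate solution $v^1(\cdot-t_n^1)$ to the BNLS$_V$ flow starting at $u_c(t_n)$, together with the smallness of the $W_n^M$ error and the smallness of $\|e^{itH}W_n^M\|_{S(\dot{H}^{s_c})}$, we conclude $\|u_c\|_{S(\dot{H}^{s_c};[t_n,\infty))}\to 0$, again contradicting $\|u_c\|_{S(\dot{H}^{s_c};[0,\infty))}=\infty$. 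Hence $\{t_n^1\}$ is bounded; up to a subsequence $t_n^1\to t_\ast\in\mathbf{R}$, and by strong continuity of $e^{-itH}$ on $H^2$ together with $W_n^M\to 0$,
\begin{align*}
u_c(t_n)=e^{-it_n^1 H}\tilde\psi^1+W_n^M\longrightarrow e^{-it_\ast H}\tilde\psi^1\quad\text{in }H^2(\mathbf{R}^N),
\end{align*}
which proves the desired precompactness. The main obstacle is the single-profile reduction, which requires adapting the three-claim perturbation scheme of Proposition \ref{pro5.6} to the potential setting, using the refined Strichartz estimates of Section \ref{sec-2} to absorb the cross terms generated by $V$ and by the nonlinear interaction between well-separated profiles.
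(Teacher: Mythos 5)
The paper gives no proof of Proposition \ref{pro5.7} (it is explicitly omitted as ``standard''), and your argument is exactly the standard concentration--compactness proof that the omission refers to: profile decomposition of $u_c(t_n)$, reduction to a single profile via the minimality of $(M^{\frac{2-s_c}{s_c}}E)_c$ and the Pythagorean/energy expansions, exclusion of $|t_n^1|\to\infty$ by the vanishing of the linear tail plus the perturbation lemma, and strong convergence of the remainder. The execution is correct; the only cosmetic caveat is that, as in the paper itself, the argument really yields precompactness of the half-orbit on whichever time direction the $S(\dot H^{s_c})$-norm is infinite, which is all that Theorem \ref{th6.1} requires.
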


\begin{coro}\label{cor5.8}
 Let $V$, $p$ and $N$ satisfy the assumptions of Theorem \ref{th1.2}. Suppose that $u$ be a solution of \eqref{1.1} such that $K=\{u(t)|
~t\in {\bf R} \}$ is precompact in $H^2({\bf R}^{N})$. Then for each
$\epsilon>0,$ there exists $R>0$ independent of $t$ such that, for any $1\leq i, j\leq N$,
\begin{align}\label{5.89}
\int_{|x|>R}|\partial_{ij} u(x,t)|^2+|\partial_{j}u(x,t)|^2+|u(x,t)|^2+|u(x,t)|^{p+1}dx\leq\epsilon.
\end{align}
\end{coro}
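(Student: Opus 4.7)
The plan is to exploit the precompactness of $K$ in $H^2(\mathbf{R}^N)$ to reduce the uniform-in-$t$ localization to a localization for a finite $\eta$-net, which then follows from standard tail-smallness for individual $H^2$ functions.

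First I would fix $\epsilon>0$ and choose $\eta=\eta(\epsilon)>0$ small (to be adjusted at the end). Since $K\subset H^2(\mathbf{R}^N)$ is precompact, there exists a finite set of times $\{t_1,\dots,t_m\}\subset\mathbf{R}$ such that for every $t\in\mathbf{R}$ one can find some $k=k(t)\in\{1,\dots,m\}$ with
\begin{equation*}
\|u(t)-u(t_k)\|_{H^2(\mathbf{R}^N)}<\eta.
\end{equation*}
By the Sobolev embedding $H^2(\mathbf{R}^N)\hookrightarrow L^{p+1}(\mathbf{R}^N)$ (which holds under the assumption $1+8/N<p<1+8/(N-4)$ of Theorem \ref{th1.2}, since then $p+1<2N/(N-4)$), this $H^2$-closeness also gives $\|u(t)-u(t_k)\|_{L^{p+1}}\lesssim\eta$.

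Second, since each individual function $u(t_k)$ lies in $H^2(\mathbf{R}^N)\cap L^{p+1}(\mathbf{R}^N)$, the dominated convergence theorem (applied to $|\partial_{ij}u(t_k)|^2,|\partial_j u(t_k)|^2,|u(t_k)|^2,|u(t_k)|^{p+1}$) yields an $R_k>0$ such that
\begin{equation*}
\int_{|x|>R_k}\bigl(|\partial_{ij}u(t_k)|^2+|\partial_j u(t_k)|^2+|u(t_k)|^2+|u(t_k)|^{p+1}\bigr)\,dx<\eta.
\end{equation*}
Setting $R:=\max_{1\le k\le m}R_k$ gives a single radius that works for every element of the net.

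Third, for an arbitrary $t\in\mathbf{R}$ I would pick the corresponding $t_k$ from the net and estimate the four tail quantities at $u(t)$ by the triangle inequality: each tail at $u(t)$ is bounded by the tail at $u(t_k)$ plus the full-space norm of the difference $u(t)-u(t_k)$. The first contribution is at most $\eta$ by the choice of $R$, while the second is $\lesssim\|u(t)-u(t_k)\|_{H^2}^2+\|u(t)-u(t_k)\|_{L^{p+1}}^{p+1}\lesssim\eta^2+\eta^{p+1}$. Choosing $\eta$ small enough in terms of $\epsilon$ (and the implicit Sobolev constant) yields $\le\epsilon$, completing the proof.

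There is no real obstacle here: the statement is a soft compactness fact, so the only point requiring a small remark is handling the $L^{p+1}$ term, for which one invokes the Sobolev embedding permitted by the subcritical range of $p$; everything else is the standard $\eta$-net argument for relatively compact subsets of a function space in which individual elements have vanishing tails.
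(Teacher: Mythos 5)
Your proof is correct and is precisely the standard $\eta$-net argument that the paper has in mind when it states that the proof of this corollary (together with Proposition \ref{pro5.7}) "is standard and we omit here." The only cosmetic point is that bounding the squared (or $(p+1)$-th power) tail of $u(t)$ by the tail of $u(t_k)$ plus the norm of the difference requires first applying the triangle inequality at the level of the $L^2$ or $L^{p+1}$ norms and then raising to the appropriate power, which you implicitly absorb into the choice of $\eta$ small; this is harmless.
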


\section{Proof of Theorem \ref{th1.2}}
\setcounter{equation}{0}

In this section, we prove the following rigidity statement and finish the proof of Theorem \ref{th1.2}.

\begin{theorem}\label{th6.1}
 Suppose that $u_0\in H^2({\bf R}^{N})$ is radial,
$$M(u_{0})^{\frac{2-s_c}{s_c}}E(u_0)<M(Q)^{\frac{2-s_c}{s_c}}E_{0}(Q)$$ and
$$\|u_{0}\|^{\frac{2-s_c}{s_c}}_{L^{2}}\|H^{\frac{1}{2}} u_{0}\|_{L^{2}}<\|Q\|^{\frac{2-s_c}{s_c}}_{L^{2}}\|\Delta Q\|_{L^{2}}.$$ Let $u$
be the corresponding solution of the equation \eqref{1.1} of Theorem \ref{th1.2} with initial data
$u_0$. If $K_+=\{u(t):t\in[0,\infty)\}$ is precompact in $H^2({\bf R}^{N})$, then
$u_0\equiv0$. The same conclusion holds if
$K_-=\{u(t):t\in(-\infty,0]\}$ is precompact in $H^2({\bf R}^{N})$.
\end{theorem}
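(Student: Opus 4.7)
The plan is to run a localized virial/Morawetz rigidity argument in the spirit of Boulenger--Lenzmann \cite{B-Lenzmann} and the first author \cite{Guo}, adapted to the potential setting. I only treat $K_+$; the other case is analogous (integrating backwards). Fix a smooth radial cutoff $\phi:[0,\infty)\to[0,\infty)$ with $\phi(r)=r^2/2$ for $r\le 1$, $\phi\equiv\mathrm{const}$ for $r\ge 2$, and $|\phi^{(k)}|\lesssim 1$ for $1\le k\le 4$. Set $\phi_R(x)=R^2\phi(|x|/R)$. Then on $|x|\le R$ one has $\phi_R(x)=|x|^2/2$, $\nabla\phi_R=x$, $D^2\phi_R=I$, $\Delta\phi_R=N$, $\Delta^2\phi_R=0$, while on $|x|\ge R$ all cutoff contributions are supported in the annulus $R\le|x|\le 2R$ and satisfy $|\nabla\phi_R|\lesssim R$, $|D^2\phi_R|\lesssim 1$, $|\Delta^2\phi_R|\lesssim R^{-2}$. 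Define the localized virial
\begin{align*}
\mathcal{V}_R(t) := 2\,\Im\int_{{\bf R}^N}\overline{u(t,x)}\,\nabla\phi_R(x)\cdot\nabla u(t,x)\,dx.
\end{align*}
Precompactness of $K_+$ in $H^2({\bf R}^N)$ yields a uniform bound $\|u(t)\|_{H^2}\le C_0$, and hence $|\mathcal{V}_R(t)|\le C\,R$ uniformly in $t\ge 0$.

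Next I would differentiate $\mathcal{V}_R$ along the flow \eqref{1.1}. A routine integration by parts yields
\begin{align*}
\frac{d}{dt}\mathcal{V}_R(t)=8\|\Delta u\|_{L^2}^2-\frac{2N(p-1)}{p+1}\|u\|_{L^{p+1}}^{p+1}-2\int_{{\bf R}^N}(x\cdot\nabla V)|u|^2\,dx+\mathrm{Err}_R(t),
\end{align*}
where $\mathrm{Err}_R(t)$ collects the cutoff contributions supported in $|x|\ge R$ arising from the commutator $[\Delta^2,\phi_R]$, from $(\nabla\phi_R-x)\cdot\nabla V$ tested against $|u|^2$, and from the nonlinear piece. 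By Lemma \ref{lem4.3} the subthreshold hypothesis produces a constant $c_\delta>0$ with
\begin{align*}
8\|\Delta u\|_{L^2}^2-\frac{2N(p-1)}{p+1}\|u\|_{L^{p+1}}^{p+1}\ge 8c_\delta\|\Delta u(t)\|_{L^2}^2,
\end{align*}
while the repulsive hypothesis $x\cdot\nabla V\le 0$ makes the third term nonnegative. Using Corollary \ref{cor5.8} together with the decay of $V$ and $\nabla V$, one may choose $R$ large (uniformly in $t$) so that $|\mathrm{Err}_R(t)|\le 4c_\delta\|\Delta u(t)\|_{L^2}^2$ for all $t\ge 0$, whence
\begin{align*}
\frac{d}{dt}\mathcal{V}_R(t)\ge 4c_\delta\|\Delta u(t)\|_{L^2}^2.
\end{align*}

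To conclude, suppose for contradiction that $u_0\not\equiv 0$. Mass conservation gives $\|u(t)\|_{L^2}=\|u_0\|_{L^2}>0$, and Lemma \ref{lem4.4} together with Remark \ref{rem1.3} provides $\|\Delta u(t)\|_{L^2}^2\ge c_1>0$ uniformly in $t$. Integrating the lower bound from $0$ to $T$ gives
\begin{align*}
4c_\delta c_1\,T\le\mathcal{V}_R(T)-\mathcal{V}_R(0)\le 2C\,R,
\end{align*}
a contradiction as $T\to\infty$. Hence $u_0\equiv 0$.

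\textbf{Main obstacle.} The technical heart is bookkeeping of $\mathrm{Err}_R(t)$. In contrast to the Schr\"odinger case, differentiating $\mathcal{V}_R$ against the biharmonic flow produces expressions involving $\Delta^2\phi_R$, $\nabla\Delta\phi_R\cdot\Im(\bar u\nabla u)$, and a full contraction of $D^2\phi_R$ against $D^2 u$ in the annulus $R\le|x|\le 2R$. Controlling these requires genuine $H^2$-localization of the orbit, which Corollary \ref{cor5.8} supplies; the decay $\beta>N+4$ of $V$ and $\nabla V$ closes the tail estimates for the potential-induced errors. A secondary subtlety is preserving the repulsive sign of the $V$-contribution under localization: since $\nabla\phi_R=x$ exactly on $|x|\le R$, the sign-definite bulk $-2\int_{|x|\le R}(x\cdot\nabla V)|u|^2\,dx\ge 0$ is retained and only the tail $|x|\ge R$ contributes to $\mathrm{Err}_R$, where it is absorbed into the coercive term $c_\delta\|\Delta u\|_{L^2}^2$.
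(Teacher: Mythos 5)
Your argument is correct and coincides with the paper's own proof of Theorem \ref{th6.1}: the same localized virial $M_a(t)=2\int\partial_j a\,\Im(\bar u\,\partial_j u)\,dx$ with $a=R^2\phi(\cdot/R)$, the same use of Lemma \ref{lem4.3} for coercivity, of $x\cdot\nabla V\le 0$ to discard the potential term, of Corollary \ref{cor5.8} to control the cutoff and potential tails uniformly in $t$, and the same integration-in-time contradiction against the uniform bound $|M_a(t)|\lesssim R$ furnished by precompactness. The only differences are cosmetic: your overall sign and normalization of the virial derivative differ from the paper's \eqref{6.2}--\eqref{6.3} (which are themselves mutually off by a factor of $2$), harmless since the contradiction only uses $|\mathcal V_R(T)-\mathcal V_R(0)|\gtrsim T$; and you make explicit, via Lemma \ref{lem4.4}, Remark \ref{rem1.3} and mass conservation, the uniform lower bound $\inf_t\|\Delta u(t)\|_{L^2}^2>0$ that the paper leaves implicit when it writes $\delta_0\|\Delta u_0\|_{L^2}^2$ in \eqref{6.6}--\eqref{6.7}.
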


\begin{proof}
 We first define
\begin{align}\label{6.1}
M_{a}(t)=2\displaystyle\int_{{\bf R}^{N}}\partial_{j}a \Im(\bar{u}\partial_{j}u)dx,
\end{align}
where $a\in C_{c}^{\infty}({\bf R}^{N})$.
The direct computation yields ( see e.g. Pausader \cite{Pau3})
\begin{align}\label{6.2}
M_{a}'(t)
&=2\displaystyle\int_{{\bf R}^{N}}\Big(2\partial_{j}u\partial_{k}\bar{u}\partial_{jk}\Delta a
-\frac{1}{2}\Delta^{3}a|u|^{2}-4\partial_{jk}a\partial_{ik}u\partial_{ij}\bar{u}
+\Delta^{2}a|\nabla u|^{2}\Big)dx\nonumber\\
&+\frac{2(p-1)}{p+1}\displaystyle\int_{{\bf R}^{N}}\Delta a|u|^{p+1}dx
+2\displaystyle\int_{{\bf R}^{N}}\nabla a\cdot\nabla V|u|^{2}dx,
\end{align}
Take a radially symmetric function $\phi\in C_{c}^{\infty}$ such that
$\phi(x)=|x|^{2}$ for $|x|\leq 1$ and $\phi(x)=0$ for $|x|\geq 2$, and define
$a(x)=R^{2}\phi(\frac{x}{R})$. By the repulsiveness assumption on the potential
$V$, direct computation gives
\begin{align}\label{6.3}
-M_{a}'(t)
&=16\displaystyle\int_{{\bf R}^{N}}|\partial_{ij}u|^{2}dx
-\frac{4n(p-1)}{p+1}\displaystyle\int_{{\bf R}^{N}}|u|^{p+1}dx
-4\displaystyle\int_{{\bf R}^{N}}x\cdot\nabla V|u|^{2}dx
+({\rm Remainder})\nonumber\\
&\geq  16\displaystyle\int_{{\bf R}^{N}}|\Delta u|^{2}dx
-\frac{4n(p-1)}{p+1}\displaystyle\int_{{\bf R}^{N}}|u|^{p+1}dx
+({\rm Remainder}),
\end{align}
where
\begin{align}\label{6.4}
{\rm (Remainder)}&=-16\displaystyle\int_{|x|\geq R}|\partial_{ij}u|^{2}dx
+8\displaystyle\int_{R\leq |x|\leq 2R}(\partial_{jk}\phi)\Big(\frac{x}{R}\Big)\partial_{ik}u\partial_{ij}\bar{u}dx\nonumber\\
&+\frac{4n(p-1)}{p+1}\displaystyle\int_{|x|\geq R}|u|^{p+1}dx
-\frac{2(p-1)}{p+1}\displaystyle\int_{R\leq |x|\leq 2R}(\Delta \phi)\Big(\frac{x}{R}\Big)|u|^{p+1}dx\nonumber\\
&+4\displaystyle\int_{|x|\geq R}x\cdot\nabla V|u|^{2}dx
-2\displaystyle\int_{R\leq |x|\leq 2R}R(\nabla\phi)\Big(\frac{x}{R}\Big)\cdot\nabla V|u|^{2}dx\nonumber\\
&-\frac{4}{R^{2}}\displaystyle\int_{R\leq |x|\leq 2R}
\partial_{j}u\partial_{k}\bar{u}(\partial_{jk}\Delta\phi)\Big(\frac{x}{R}\Big)dx
+\frac{1}{R^{4}}\displaystyle\int_{R\leq |x|\leq 2R}(\Delta^{3}\phi)\Big(\frac{x}{R}\Big)|u|^{2}dx.
\end{align}
From Corollary \ref{cor5.8}, we can infer that $({\rm Remainder})\rightarrow 0$ as $R\rightarrow\infty$ uniformly in
$t\in [0,\infty)$. In fact,
\begin{align}\label{6.5}
({\rm Remainder})
&\lesssim \displaystyle\int_{|x|\geq R}|\partial_{ij}u|^{2}dx
+\displaystyle\int_{|x|\geq R}|u|^{p+1}dx
+\frac{1}{R^{2}}\displaystyle\int_{|x|\geq R}|\partial_{i}u|^{2}dx\nonumber\\
&+\frac{1}{R^{4}}\displaystyle\int_{|x|\geq R}|u|^{2}dx
+\||x||\nabla V|\|_{L^{\frac{n}{4}}}\|u\|_{L^{\frac{2n}{n-4}}(|x|\geq R)}^{2}
\rightarrow 0.
\end{align}

Let a positive constant $\delta\in(0,1)$ be such that
$M(u_{0})^{\frac{2-s_c}{s_c}}E(u_0)<(1-\delta)M(Q)^{\frac{2-s_c}{s_c}}E(Q)$.
 By
Lemma \ref{lem4.3}, Remark \ref{rem1.3} and Lemma \ref{lem4.4}, we obtain that there
exists some constant $\delta_0>0$ such that
\begin{align}\label{6.6}
4\int|\Delta
u|^2dx-\frac{N(p-1)}{p+1}\int|u|^{p+1}dx
\geq\delta_0\displaystyle\int_{{\bf R}^{N}}|\Delta u_0|^2dx,
\end{align}
which implies by \eqref{6.3} and \eqref{6.5} that
\begin{align}\label{6.7}
-M_{a}'(t)\geq \delta_0\displaystyle\int_{{\bf R}^{N}}|\Delta u_0|^2dx.
\end{align}
Thus, we have
\begin{align}\label{6.8}
M_{a}(0)-M_{a}(t)\geq \delta_0 t\displaystyle\int_{{\bf R}^{N}}|\Delta u_0|^2dx.
\end{align}
On the other hand, by the definition of $M_{a}(t)$, we should have
\begin{align}\label{6.9}
|M_{a}(t)|
&\leq R\|u\|_{L^{2}}\|\nabla u\|_{L^{2}}
\lesssim R\|u\|_{L^{2}}^{\frac{3}{2}}\|\Delta u\|_{L^{2}}\nonumber\\
&\lesssim R\|u\|_{L^{2}}^{\frac{3}{2}}\|H^{\frac{1}{2}} u\|_{L^{2}}
\lesssim R\|Q\|_{H^{2}}^{\frac{1}{s_{c}}},
\end{align}
which is a contradiction for $t$ large unless $u_{0}=0$.
\end{proof}

Now, we can finish the proof of Theorem \ref{th1.2}.\\
{\bf The  Proof of Theorem \ref{th1.2}.}
In view of Proposition \ref{pro5.7}, Theorem \ref{th6.1}
implies that $u_c$ obtained in Proposition \ref{pro5.6} cannot
exist. Thus, there must holds that $(M^{\frac{2-s_c}{s_c}}E)_c
=E_{0}(Q)M(Q)^{\frac{2-s_c}{s_c}}$, which combined with
Proposition \ref{pro2.10} implies Theorem \ref{th1.2}.
$\hfill\Box$

\section{Finite-time blowup}

In this section, we prove the finite-time blowup for radial data in $H^2({\bf R}^N)$, that is, Theorem \ref{thblowup}.
To this end, we first obtain the localized virial identity using the commutator identities introduced by Boulenger and Lenzmann \cite{B-Lenzmann}.

Let $\phi:{\bf R}^N\rightarrow\bf R$ be a radial function with regularity property $\nabla^j\phi\in L^\infty({\bf R}^N)$ for $1\leq j\leq6$ and such that
\begin{align}
\phi(r)=\left\{ \begin{aligned}
  &\frac{r^2}2\;\;&r\leq1, \\
  &const. \;\; &r\geq10
 \end{aligned}\right.
 \ \ \ \ \ \ \
 \phi''(r)\leq1\ for\ r\geq0.
\end{align}
For $R>0$ given, we define the rescaled function $\phi_R:{\bf R}^N\rightarrow\bf R$ by setting
\begin{align}
\phi_R(r):=R^2\phi\left(\frac rR\right).
\end{align}
It can be checked that for all $r\geq0$,
\begin{align}
1-\phi''_R(r)\geq0,\ \ 1-\frac{\phi'_R(r)}r\geq0,\ \ N-\Delta\phi_R(r)\geq0.
\end{align}
Moreover, we also recall the following properties of $\phi_R$:
\begin{align*}
\left\{\begin{aligned}
 &\nabla\phi_R(r)=R\phi'\left(\frac rR\right)\frac x{|x|}=\left\{ \begin{aligned}
  &x\;\;&r\leq R, \\
  &0 \;\; &r\geq10R
 \end{aligned}\right.\\
 &\|\nabla^j\phi_R\|_{L^\infty}\lesssim R^{2-j},\ \ 0\leq j\leq6,\\
 &supp(\nabla^j\phi_R)\subset
 \left\{ \begin{aligned}
  &\{|x|\leq10R\}\;\;&j=1,2, \\
  &\{R\leq|x|\leq10R\} \;\; &3\leq j\leq6.
 \end{aligned}\right.\\
\end{aligned}\right.
\end{align*}
For $u\in H^2({\bf R}^N)$, we define {\it the localized virial} of $u$ to be the quantity
\begin{align}
\mathcal M_R(u):=\langle u,\Gamma_Ru\rangle=2Im\int_{\bf R^N}u\nabla\phi_R\cdot\nabla\bar udx,
\ \ \ \Gamma_R:=i\big(\nabla\phi_R\cdot\nabla+\nabla\cdot\nabla\phi_R\big).
\end{align}
By the Cauchy-Schwarz inequality, we have $|\mathcal M_R(u)|\lesssim R\|u\|_{L^2}\|\nabla u\|_{L^2}$.
\begin{lemma}\label{lemMR}
Let $N\geq2$ and $R>0$. Suppose that $u\in C([0,T);H^2({\bf R}^N))$ is a radial solution of \eqref{1.1}. Then for any $t\in[0,T)$, we have that differential
inequality
\begin{align*}
\frac d{dt}\mathcal M_R(u(t))&\leq
2N(p-1)E(u_0)-((p-1)N-8)\int_{{\bf R}^N}|H^{\frac{1}{2}} u|^2dx\\
&-\int_{{\bf R}^N}| u|^2(2x\cdot\nabla V(x)+8V(x))dx\\
&+\mathcal O\left(R^{-4}+R^{-2}\|\nabla u\|_{L^2}^2+R^{-\frac{(N-1)(p-1)}{2}}\|\nabla u\|_{L^2}^{\frac{p-1}2}+\|u\|_{L^2(|x|>R)}^2
\right).
\end{align*}
\end{lemma}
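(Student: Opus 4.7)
The plan is to adapt the Boulenger--Lenzmann localized-virial computation (which is carried out in essentially the same form in equations~(6.1)--(6.5) of this paper) to the potential case, organizing the argument as a ``clean'' bulk contribution from $\{|x|\leq R\}$ plus remainder terms supported on the annulus $\{R\leq|x|\leq 10R\}$.

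First, I would differentiate $\mathcal M_R(u(t))$ in time using $iu_t = -Hu + |u|^{p-1}u$ with $H=\Delta^2+V$, and integrate by parts to move derivatives off $u_t$. This reproduces a general identity of the same form as equation~(6.2) of the paper, together with one additional term $-2\int |u|^2\,\nabla\phi_R\cdot\nabla V\,dx$ coming from the commutator of $V$ with the virial generator $\Gamma_R$. Inside the ball, where $\phi_R(x) = |x|^2/2$ so that $\partial_{jk}\phi_R=\delta_{jk}$, $\Delta\phi_R=N$, and $\nabla^j\phi_R\equiv 0$ for $j\geq 3$, the identity collapses to the clean bulk expression $8\int|\Delta u|^2 - \tfrac{2N(p-1)}{p+1}\int|u|^{p+1} - 2\int x\cdot\nabla V\,|u|^2\,dx$ (using $\int|\nabla^2 u|^2 = \int|\Delta u|^2$ for radial $u$).

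Next, I would use energy conservation $E(u(t))=E(u_0)$ to eliminate $\int|u|^{p+1}$ via $\int|u|^{p+1} = \tfrac{p+1}{2}\int|H^{1/2}u|^2 - (p+1)E(u_0)$, and then substitute $\int|\Delta u|^2 = \int|H^{1/2}u|^2 - \int V|u|^2$. Collecting terms yields exactly
\begin{align*}
2N(p-1)E(u_0) - (N(p-1)-8)\!\int\!|H^{1/2}u|^2\,dx - \!\int\!|u|^2\bigl(8V + 2x\cdot\nabla V\bigr)dx,
\end{align*}
which is the principal part of the asserted upper bound. Note that the coefficient $8V$ in the potential contribution is generated by this algebraic rearrangement rather than appearing directly in the raw virial computation.

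Finally, I would control the annulus remainder, which splits into four types: (i)~$|u|^2$ times $\nabla^j\phi_R$ for $j=4,6$, giving the $R^{-4}$ term via $\|\nabla^j\phi_R\|_\infty\lesssim R^{2-j}$ and $\|u\|_{L^2(|x|>R)}\leq\|u\|_{L^2}$; (ii)~$|\nabla u|^2$ times $\nabla^j\phi_R$ for $j=3,4$ (including the Hessian piece $\partial_{jk}\phi_R\partial_{ik}u\partial_{ij}\bar u$ restricted to the annulus), yielding the $R^{-2}\|\nabla u\|_{L^2}^2$ term after using $\phi_R''\leq 1$, $1-\phi_R'/r\geq 0$, and the radial structure of $u$; (iii)~the nonlinear tail $\int_{|x|\geq R}|u|^{p+1}$, controlled by the Strauss radial Sobolev bound $|u(x)|\lesssim|x|^{-(N-1)/2}\|u\|_{L^2}^{1/2}\|\nabla u\|_{L^2}^{1/2}$, which produces the factor $R^{-(N-1)(p-1)/2}\|\nabla u\|_{L^2}^{(p-1)/2}$; and (iv)~the potential tail $\int_{|x|>R}|u|^2\,\nabla\phi_R\cdot\nabla V\,dx$, bounded by $\|u\|_{L^2(|x|>R)}^2$ via $|\nabla\phi_R|\lesssim R$ and the hypothesis $|\nabla V|\lesssim\langle x\rangle^{-1}$. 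The main obstacle is item~(ii): without radial symmetry the Hessian $|\nabla^2 u|^2$ cannot be pointwise reduced to $|\Delta u|^2$, and the radial assumption on $u$ together with the ``convexity'' bounds on $\phi_R$ is essential to close the estimate. This is precisely why the lemma requires $u_0$ to be radial.
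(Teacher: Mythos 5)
Your proposal is correct and follows essentially the same route as the paper: the time derivative of $\mathcal M_R$ is split into the commutators of $\Gamma_R$ with $\Delta^2$, with $V$, and with the nonlinearity, each handled exactly as in Boulenger--Lenzmann (radial structure for the Hessian term, the Strauss bound for the nonlinear tail, the decay of $\nabla V$ for the potential tail), and the final form is obtained by the same algebraic rearrangement via energy and mass conservation, producing the coefficient $8V+2x\cdot\nabla V$ and the $-((p-1)N-8)\int|H^{1/2}u|^2$ term. No gaps.
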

\begin{proof}
We follow the calculating in the proof of \cite[Lemma 3.1]{B-Lenzmann} and only sketch the steps except those involving the potential function $V$.

{\it Step 1}. By taking the time derivative and the equation \eqref{1.1},
\begin{align}
\frac d{dt}\mathcal M_R(u(t))=\mathcal A_1(u(t))+\mathcal A_2(u(t))+\mathcal B(u(t)
\end{align}
with
\begin{align*}
\mathcal A_1(u(t))=\big\langle u(t),[i\Gamma_R,\Delta^2]u(t)\big\rangle,\ \
\mathcal A_2(u(t))=\big\langle u(t),[i\Gamma_R,V(x)]u(t)\big\rangle,\ \
\mathcal B(u(t)=\big\langle u(t),[|u|^{p-1},i\Gamma_R]u(t)\big\rangle,
\end{align*}
where $[A,B]=AB-BA$.

{\it Step 2}. Following the proof of (3.13) on page 515 of \cite{B-Lenzmann}, for the dispersive part $\mathcal A_1$, we have
\begin{align*}
[i\Gamma_R,\Delta^2]=8\partial_{kl}^2(\partial_{lm}^2\phi_R)\partial_{mk}^2+4\partial_k(\partial_{kl}^2\Delta\phi_R)\partial_l
+2\partial_k(\Delta^2\phi_R)\partial_k+\Delta^3\phi_R.
\end{align*}
Since for a radial function $f$, and with $r=|x|$,
\begin{align*}
&\partial_{kl}^2f=\left(\delta_{kl}-\frac{x_kx_l}{r^2}\right)\frac{\partial_rf}r+\frac{x_kx_l}{r^2}\partial^2_rf,\\
&\int_{{\bf R}^N}|\Delta u|^2dx=\int_{{\bf R}^N} |\partial_r^2u|^2+\frac{N-1}{r^2}|\partial_ru|^2dx,
\end{align*}
then we have
\begin{align*}
\mathcal A_1(u(t))\leq8\int_{{\bf R}^N}|\Delta u|^2dx+\mathcal O\Big(R^{-4}+R^{-2}\|\nabla u\|_{L^2}^2\Big).
\end{align*}

{\it Step 3}. By straight calculation,
\begin{align*}
\mathcal A_2(u(t))=\langle u(t),[i\Gamma_R,V(x)]u(t)\rangle=-2\int_{{\bf R}^N}\nabla\phi_R\cdot\nabla V|u|^2dx.
\end{align*}
Thus, from the properties of $\phi_R$ and the decay of $V$, we get easily that
\begin{align*}
\mathcal A_2(u(t)&)=-2\int_{{\bf R}^N}x\cdot\nabla V|u|^2dx+2\int_{|x|\geq R}(x\cdot\nabla V-\nabla\phi_R\cdot\nabla V)|u|^2dx\\
&\leq -2\int_{{\bf R}^N}x\cdot\nabla V|u|^2dx+C\|u\|_{L^2(|x|\geq R)}^2.
\end{align*}

{\it Step 4}. For the nonlinear term $\mathcal B$, the same calculation as the step 3 on page 516 of \cite{B-Lenzmann} gives that
\begin{align*}
\mathcal B(u(t)=-\frac{2(p-1)N}{p+1}\int_{{\bf R}^N}|u|^{p+1}dx+\mathcal O\Big(R^{-\frac{(N-1)(p-1)}{2}}\|\nabla u\|_{L^2}^{\frac{p-1}2}\Big).
\end{align*}

Finally, we deduce that
\begin{align*}
\frac d{dt}\mathcal M_R(u(t))&\leq8\int_{{\bf R}^N}|\Delta u|^2dx
-2\int_{{\bf R}^N}x\cdot\nabla V|u|^2dx-\frac{2(p-1)N}{p+1}\int_{{\bf R}^N}|u|^{p+1}dx\\
&+\mathcal O\left(R^{-4}+R^{-2}\|\nabla u\|_{L^2}^2+R^{-\frac{(N-1)(p-1)}{2}}\|\nabla u\|_{L^2}^{\frac{p-1}2}+\|u\|_{L^2(|x|>R)}^2
\right)\\
&=
2N(p-1)E(u_0)-((p-1)N-8)\int_{{\bf R}^N}|H^{\frac{1}{2}} u|^2dx\\
&-\int_{{\bf R}^N}| u|^2(2x\cdot\nabla V(x)+8V(x))dx\\
&+\mathcal O\left(R^{-4}+R^{-2}\|\nabla u\|_{L^2}^2+R^{-\frac{(N-1)(p-1)}{2}}\|\nabla u\|_{L^2}^{\frac{p-1}2}+\|u\|_{L^2(|x|>R)}^2
\right)
\end{align*}
and this completes the proof of Lemma \ref{lemMR}.
\end{proof}

In the end, we will proof Theorem \ref{thblowup}.

{\bf Proof of Theorem \ref{thblowup}:}

{\it Case 1}: $E(u_0)<0$.

Setting $\delta=\big((p-1)N-8\big)/2$, then $\delta>0$ from $ p>1+{8\over N}$. From Lemma \ref{lemMR}, we obtain that
\begin{align*}
\frac d{dt}\mathcal M_R(u(t))&\leq
2N(p-1)E(u_0)-2\delta\int_{{\bf R}^N}|H^{\frac{1}{2}} u|^2dx\\
&-\int_{{\bf R}^N}| u|^2(2x\cdot\nabla V(x)+8V(x))dx\\
&+\mathcal O\left(R^{-4}+R^{-2}\|\nabla u\|_{L^2}^2+R^{-\frac{(N-1)(p-1)}{2}}\|\nabla u\|_{L^2}^{\frac{p-1}2}+\|u\|_{L^2(|x|>R)}^2
\right)\\
&\leq 2N(p-1)E(u_0)-2\delta\int_{{\bf R}^N}|\Delta u|^2dx
+\|W_-\|_{L^{\frac N4}}\|\Delta u\|_{L^2}^2\\
&+\mathcal O\left(R^{-4}+R^{-2}\|\nabla u\|_{L^2}^2+R^{-\frac{(N-1)(p-1)}{2}}\|\nabla u\|_{L^2}^{\frac{p-1}2}+\|u\|_{L^2(|x|>R)}^2
\right)\\
&=2N(p-1)E(u_0)-2\delta\int_{{\bf R}^N}|\Delta u|^2dx+\|W_-\|_{L^{\frac N4}}\|\Delta u\|_{L^2}^2\\
&+\mathcal O\left(R^{-4}+R^{-2}\|\Delta u\|_{L^2}+R^{-\frac{(N-1)(p-1)}{2}}\|\Delta u\|_{L^2}^{\frac{p-1}4}+\|u\|_{L^2(|x|>R)}^2
\right)
\end{align*}
where we use the assumption  $2x\cdot\nabla V+(p-1)NV=W_+-W_-$ with $W_-\in L^{\frac N4}$, the H\"older inequality, the Sobolev embedding  and $\|\nabla u\|_{L^2}\leq C(u_0)\|\Delta u\|_{L^2}^{\frac12}$.

Since $p-1\leq8$ and $E(u_0)<0$, we can choose $R$ sufficiently large such that for $t\in[0,T)$,
\begin{align*}
\frac d{dt}\mathcal M_R(u(t))\leq
2N(p-1)E(u_0)-\frac{3\delta}2\int_{{\bf R}^N}|\Delta u|^2dx+\|W_-\|_{L^{\frac N4}}\|\Delta u\|_{L^2}^2.
\end{align*}
And if we suppose $\|W_-\|_{L^{\frac N4}}$ is sufficiently small ( e.g. $\|W_-\|_{L^{\frac N4}} \le \delta/2$ ), then it follows that
\begin{align*}
\frac d{dt}\mathcal M_R(u(t))\leq
2N(p-1)E(u_0)-\delta\int_{{\bf R}^N}|\Delta u|^2dx,
\end{align*}
which, combined with the Cauchy-Schwarz inequality
$|\mathcal M_R(u(t))|\lesssim C(u_0)R\|\Delta u\|_{L^2}^{\frac12}$ and by elementary  analysis (see the case 1 on page 517 of \cite{B-Lenzmann}), gives that $\mathcal M_R(u(t))\rightarrow-\infty$ as $t\rightarrow t_*$ for some
finite time $t_*<+\infty$. Therefore, $u(t)$ cannot exist for all $t\geq0$. By blowup alternative for the Energy-subcritical case, this completes the proof of Theorem
\ref{thblowup}.

{\it Case 2}. $E(u_0)\geq0$,
\begin{align*}
M(u_{0})^{\frac{2-s_{c}}{s_{c}}}E(u_{0})<M(Q)^{\frac{2-s_{c}}{s_{c}}}E_{0}(Q),
\end{align*}
and
\begin{align*}
\|u_{0}\|_{L^{2}({\bf R}^{N})}^{\frac{2-s_{c}}{s_{c}}}\|H^{\frac{1}{2}} u_{0}\|_{L^{2}({\bf R}^{N})}
>\|Q\|_{L^{2}({\bf R}^{N})}^{\frac{2-s_{c}}{s_{c}}}\|\Delta Q\|_{L^{2}({\bf R}^{N})}.
\end{align*}
In this case, if we take some $\eta>0$ such that
\begin{align*}
M(u_{0})^{\frac{2-s_{c}}{s_{c}}}E(u_{0})<(1-\eta)M(Q)^{\frac{2-s_{c}}{s_{c}}}E_{0}(Q),
\end{align*}
then we actually could obtain (see the case 3 on page 518-519 of \cite{B-Lenzmann} or Theorem 4.1 of \cite{Guo}) that for $\delta=((p-1)N-8)/2$,
$$2\delta(1-\eta)\|H^{\frac{1}{2}} u\|_{L^2}^2\geq2(p-1)NE(u_0).$$
Therefore, from Lemma \ref{lemMR}, Remark \ref{rem1.3} and the previous discussion we deduce the upper bound
\begin{align*}
\frac d{dt}\mathcal M_R(u(t))&\leq
2N(p-1)E(u_0)-2\delta\int_{{\bf R}^N}|H^{\frac{1}{2}} u|^2dx\\
&-\int_{{\bf R}^N}| u|^2(2x\cdot\nabla V(x)+8V(x))dx\\
&+O\left(R^{-4}+R^{-2}\|\Delta u\|_{L^2}+R^{-\frac{(N-1)(p-1)}{2}}\|\Delta u\|_{L^2}^{\frac{p-1}4}+\|u\|_{L^2(|x|>R)}^2
\right)\\
&\leq-\Big(\frac{\delta\eta}{2}+o_R(1)\Big)\int_{{\bf R}^N}|\Delta u|^2dx+o_R(1).
\end{align*}
Hence, by choosing $R>0$ sufficiently large, we conclude that
\begin{align*}
\frac d{dt}\mathcal M_R(u(t))\leq
-\frac{\delta\eta}4\int_{{\bf R}^N}|\Delta u|^2dx.
\end{align*}
Following  case 1, $u(t)$ blows up in finite time, concluding  the proof of Theorem \ref{thblowup}.
 $\hfill\Box$

\end{document}